\documentclass[12pt]{article}

\usepackage{amsmath,amsthm,amsfonts,amssymb,color,xcolor,subcaption,graphicx} %,leftindex}

\newtheorem{theorem}{Theorem}[section]
\newtheorem{corollary}[theorem]{Corollary}

\newtheorem{assumption}[theorem]{Assumption}

\newtheorem{lemma}[theorem]{Lemma}
\newtheorem{definition}[theorem]{Definition}
\newtheorem{remark}[theorem]{Remark}

\def\cA{\mathcal{A}}
\def\cB{\mathcal{B}}
\def\cC{\mathcal{C}}

\def\cF{\mathcal{F}}

\def\cI{\mathcal{I}}

\def\cL{\mathcal{L}}

\def\cN{\mathcal{N}}
\def\cP{\mathcal{P}}

\def\cS{\mathcal{S}}
\def\cT{\mathcal{T}}

\def\cZ{\mathcal{Z}}

\def\bE{\mathbb{E}}
\def\bQ{\mathbb{Q}}

\def\bP{\mathbb{P}}
\def\bR{\mathbb{R}}

\def\tu{\widetilde{u}}

\topmargin -0.4in
\headsep 0.4in
\textheight 9.0in
\oddsidemargin 0.02in
\evensidemargin 0.15in
\textwidth 6.3in

\begin{document}

\title{Global well-posedness for hyperbolic SPDEs with non-Lipschitz coefficients driven by \\ space-time L\'evy white noise}

\author{Raluca M. Balan\footnote{University of Ottawa, Department of Mathematics and Statistics, 150 Louis Pasteur Private, Ottawa, ON, K1N 6N5, Canada. E-mail address: rbalan@uottawa.ca.} \footnote{Research supported by a grant from Natural Sciences and Engineering Research Council of Canada.}
\and
Juan J. Jim\'enez\footnote{Corresponding author. Department of Mathematics and Statistics, Auburn University, 221 Parker Hall, Auburn, Alabama 36849, USA. E-mail address: juj0003@auburn.edu.}
\and
Llu\'is  Quer-Sardanyons\footnote{Department of
Mathematics, Universitat Aut\`onoma de Barcelona, 08193 Bellaterra (Barcelona),
Spain. E-mail address: lluis.quer@uab.cat.} \footnote{Research supported by grant PID2021-123733NB-I00, Ministerio de Economía y Competitividad, Spain.}
}

\date{December 12, 2025}
\maketitle

\begin{abstract}
\noindent

In this article, we study the global well-posedness of hyperbolic SPDEs on a bounded domain in $\mathbb{R}^d$, driven by a space-time L\'evy white noise, when the drift and diffusion coefficients are
locally Lipschitz and have linear growth. The equations are driven by two types of space-time L\'evy noise: (i) a finite-variance L\'evy white noise; or (ii) a symmetric L\'evy basis that may have infinite variance. A typical example of noise of the second type is the symmetric $\alpha$-stable (S$\alpha$S) random measure with $\alpha \in (0,2)$. 

\end{abstract}

\noindent {\em MSC 2020:} Primary 60H15; Secondary: 60G60, 60G51, 60G52
%60H15=SPDEs
%60G60=random fields
%60G51=Processes with independent increments; L\'evy processes
%60G52=stable stochastic processes

\vspace{1mm}

\noindent {\em Keywords:} stochastic partial differential equations, random fields, L\'evy noise, stochastic wave equation, stable random measures 

%\pagebreak
%\tableofcontents

\section{Introduction}

SPDEs driven by L\'evy noise provide a mathematical framework for modeling diffusive or vibrating systems affected by heavy-tailed randomness. Such models capture discontinuous phenomena found in diverse areas, including financial crashes, turbulent flows, and cosmic radiation. The study of discontinuous random systems dates back to the early 20th century, when classical models like Brownian motion proved insufficient for describing systems with sudden, large jumps. This limitation motivated the development of L\'evy processes, which provide a mathematical framework for modeling discontinuous behavior.

\medskip

There exist several mathematical frameworks for studying SPDEs. The random field approach, introduced by Walsh \cite{walsh86}, interprets solutions as real-valued random fields, while the infinite-dimensional approach, developed by Da Prato and Zabczyk \cite{daprato-zabczyk92}, treats them as stochastic processes taking values in a Hilbert space. We review briefly below the literature related to SPDEs driven by L\'evy noise using the random field approach, which is the topic of the present article. We refer the reader to the monograph \cite{peszat-zabczyk07} for a study of SPDEs with L\'evy noise using the infinite-dimensional approach. 

\medskip

One of the first studies on SPDEs driven by infinite-variance L\'evy noise is \cite{bie98}, which established the existence and uniqueness of the solution to the stochastic heat equation:
\begin{equation}
\label{SHE}
\frac{\partial u}{\partial t}(t,x)=\Delta u(t,x)+ \sigma(u(t,x))\dot{\Lambda}(t,x) \quad t>0,x\in \bR^d
\end{equation}
under the condition
\begin{equation}
\label{bie-cond}
\int_{\bR} |z|^p \nu(dz)<\infty \quad \mbox{for some $0<p<1+\frac{2}{d}$},
\end{equation}
where $\Lambda$ is a L\'evy white noise with L\'evy measure $\nu$ and possibly infinite variance. Condition \eqref{bie-cond} fails for the $\alpha$-stable L\'evy noise, whose L\'evy measure is given by:
\[
\nu(dz)=c_1 (-z)^{-\alpha-1}1_{(-\infty,0)}(z)+c_2 z^{-\alpha-1}1_{(0,\infty)}(z),
\]
for some $c_1,c_2\geq0$ and $\alpha \in (0,2)$. In this case, Mueller \cite{mueller98} established the existence of a solution when $\alpha <1$ and  $\sigma(u)=u^{\gamma}$, while Mytnik \cite{mytnik02} addressed the case $\alpha >1$.

The existence of solution for a general SPDE driven by an $\alpha$-stable L\'evy noise with Lipschitz coefficient $\sigma$ was established in \cite{balan14} for equations on bounded domains. For equations on the entire space $\bR^d$, a major advance occurred in article \cite{chong17-SPA}, which established the existence of a random field solution to equation \eqref{SHE}, under the assumption that:
\[
\int_{|z|\leq 1}|z|^p \nu(dz)<\infty \quad \mbox{and} \quad \int_{|z|>1} |z|^q\nu(dz)<\infty,
\]
for some $0<q\leq p$. In \cite{CDH19}, the authors proved the regularity of the sample paths of the solution of the stochastic heat equation on bounded domains, and on the entire space $\bR^d$. The uniqueness of the solution was proved in \cite{BCL23} for equation \eqref{SHE} with $\sigma(u)=\beta u$ for some $\beta>0$. In the case of the stochastic wave equation driven by infinite variance L\'evy noise, on the entire domain $\bR^d$ (in dimension $d\leq 2$), with globally Lipschitz coefficient $\sigma$, the existence of the solution was proved in \cite{balan23}, while the uniqueness was shown in \cite{jimenez24}.

In \cite{LOP04}, the authors developed a white noise framework for the study of SPDEs driven by a $d$-parameter (pure jump) L\'evy white noise, and illustrated this theory for solving the stochastic Poisson equation with L\'evy white noise.

\medskip

In the recent years, there has been an increased interest in studying SPDEs with locally Lipschitz coefficients. Many advances have been made in the case of equations driven by Gaussian noise. These equations may not have a global solution. Moreover, a global solution may not be unique, as shown for instance in \cite{marinelli-rockner10,MMP14,mytnik-perkins11,MPS06}.

In \cite{DKZ19}, the authors studied the stochastic heat equation on $(0,1)$, driven by the space-time Gaussian white noise $W$:
\begin{equation}
\label{SHE1}
\frac{\partial u}{\partial t}(t,x)=\frac{\partial^2 u}{\partial x^2}(t,x)+b\big(u(t,x)\big)
+\sigma\big( u(t,x)\big) \dot{W}(t,x), \quad t>0,x\in (0,1)
\end{equation}
with Dirichlet boundary conditions $u(t,0)=u(t,1)=0$, and proved that if 
\[
b(z)=O(|u|\log|z|) \quad \mbox{and} \quad \sigma(z)=O(|z|\log |z|),
\] 
then there exists a global solution. Articles \cite{bonder-groisman09,FN21} show that, if $\sigma$ is constant and $b$ is locally Lipschitz, non-decreasing on $(0,\infty)$ and non-negative, then the solution of \eqref{SHE1} blows up in finite time if and only if {\em the finite Osgood-condition} holds:
\begin{equation}
\label{osgood}
\int_{a}^{\infty}\frac{1}{b(z)}dz<\infty \quad \mbox{for some $a>0$}.
\end{equation}
In \cite{FKN24-1}, the authors considered the same problem for the stochastic heat equation on the entire domain $\bR^d$, and proved that if $b$ is locally Lipschitz, non-negative and non-decreasing, and $\sigma$ is globally Lipschitz with $0<\inf_{z\in \bR} \sigma(z)\leq \sup_{z\in \bR} \sigma(z)<\infty$, then
condition \eqref{osgood} implies that the solution {\em blows up everywhere and instantaneously}, i.e.
\[
\bP\big(u(t,x)=\infty \quad \mbox{for all $t>0$ and $x \in \bR$}\big)=1,
\]
where $u(t,x)=\lim_{N\to \infty} u_N(t,x)$ and $u_N$ is solution of the equation with drift coefficient $b_N(z)=b(z \wedge N)$. If $\sigma(z)=z$, it is expected that the solution will still blow up, as observed in \cite{joseph-ovhal25} in the discrete case of interacting SDEs.  Recent extensions have been provided in \cite{CFHS} for the colored noise in time: in particular, their results show that if $b(z)=z^p$ for some $p>1+\frac{2}{d}$, then the solution will not explode, provided that the initial condition is small enough.

Special attention has been given to the case of the stochastic heat equation with super-linear locally Lipschitz coefficients, driven by space-time Gaussian white noise. The case $d=1$ was considered in article \cite{Salins-EJP}, in which the author proved the existence of a global solution when $b$ satisfies the {\it infinite Osgood-condition} and  $\sigma$ grows super-linearly. 
In higher dimensions, the existence of a global solution on the entire domain $\mathbb{R}^d$ was proved in \cite{Salins-SD2022}, when $b$ satisfies an Osgood-type condition and $\sigma$ is constant. The extension of this result to the case when $\sigma$ may have super-linear has been addressed in \cite{CFHS} (see also \cite{CH-ProcAMS}). Finally, in \cite{Salins-TransAMS} a general parabolic SPDE on a bounded domain of $\mathbb{R}^d$ has been considered, assuming that the coefficients $b$ and $\sigma$ grow superlinearly, and $b$ may not be dissipative. In the same article, the author provides some Osgood-type conditions on $b$ and $\sigma$ under which there is no explosion of the solution in finite time. 

\medskip

In the present article, we continue this line of investigation dedicated to SPDEs with locally Lipschitz coefficients. The novelty lies in establishing the well-posedness of random-field solutions for a hyperbolic class of SPDEs driven by a space-time L\'evy white noise with possibly infinite variance, such as {\em the symmetric $\alpha$-stable L\'evy white noise} for $\alpha \in (0,2)$ in space-time.This is the first work in the literature to treat non-Lipschitz coefficients using the random field approach, for SPDEs driven by a general class of L\'evy white noises, including those with infinite variance.

The problem of existence of solutions for stochastic differential equations (SDEs) with L\'evy noise and locally Lipschitz coefficients was studied in the literature,
but not as extensively as for SDEs with Gaussian noise; see for example \cite{XPG15} and the references therein. Stochastic evolution equations with locally Lipschitz coefficients have also been studied in different frameworks. The recent article \cite{KM25} examines the stochastic heat equation with non-Lipschitz coefficients, driven by a time-homogeneous compensated Poisson random measure. To the best of our knowledge, the only work that studies SPDEs driven by a space-time L\'evy white noise in a context similar to ours is article \cite{WYZ24}, which proves the existence of a weak solution for the stochastic heat equation with non-Lipschitz coefficients, driven by a truncated $\alpha$-stable white noise. This is achieved using an approximation weak convergence method for the coefficients. However, it is not clear if the framework in \cite{WYZ24} is equivalent to ours, and if this weak solution satisfies also the mild formulation. 

\medskip

We now introduce the framework for this article. We will consider an equation on a bounded domain $D \subset \bR^d$  whose {\em boundary $\partial D$ is of class $C^{\infty}$}, i.e. $\partial D$ is a smooth $(d-1)$-dimensional manifold. (The fact that $D$ has a smooth boundary is required for Theorem \ref{embed-th} below, which is a key result for this article.) The equation will contain the fractional (or spectral) power $(-\Delta)^{\gamma}$ of the Laplacian, for $\gamma>0$, which is defined by:
\[
(-\Delta)^{\gamma} e_k=\lambda_k^{\gamma} e_k,
\]
where $0<\lambda_1\leq \lambda_2 \leq \ldots$ are the eigenvalues of $-\Delta$ with Dirichlet boundary conditions, and $(e_k)_{k\geq 1}$ are the corresponding normalized eigenfunctions forming a complete orthonormal basis of $L^2(D)$. Additionally, throughout this work, we denote by $H_r(D)$ {\em the fractional Sobolev space of order $r \in \bR$}; see Appendix \ref{app-Sob}. 

\medskip

For the mild formulation of the solution, we will use the Green function $G_t(x,y)$ of the operator $\dfrac{\partial^2 }{\partial t^2} +(-\Delta)^{\gamma}$ on $D$ with Dirichlet boundary conditions. The function $G_t$ has the following spectral representation:
\begin{equation}
\label{def-G}
G_t(x,y)=\sum_{k\geq 1}\frac{\sin(\lambda_k^{\gamma/2} t)}{\lambda_k^{\gamma/2}} e_k(x)e_k(y)1_{\{t\geq 0\}}  \quad  x,y\in D.
\end{equation}
Representation \eqref{def-G} can be deduced following the same arguments on pages 183-184 in \cite{duffy}.
An important property for our analysis will be the fact that, if $\gamma>d$, then 
\begin{equation}
    \label{square-G}
    \int_0^T \left(\sup_{x \in D} \int_D G_t^2 (x,y) dy \right) dt < \infty \quad \mbox{for any $T>0$}.
\end{equation}
For the sake of completeness, we prove this result in Lemma \ref{lem-square-G}.

\medskip

As mentioned above, we will consider two driving L\'evy noise processes. We include below the definitions of these processes, which are based on the same Poisson random  measure $J$. 

\medskip

Let $J$ be a Poisson random measure on $\bR_{+} \times \bR^d \times \bR_{0}$, defined on a complete probability space $(\Omega,\cF,\bP)$, of intensity $\mu(dt,dx,dz)=dtdx \nu(dz)$, where $\nu$  is a {\em L\'evy measure} on $\bR_0$, that is $\nu$ satisfies the condition:
\begin{equation}
\label{Levy-measure}
\int_{\bR_0}(z^2 \wedge 1)\nu(dz)<\infty.
\end{equation}
Here $\bR_{0}:=\bR \verb2\2 \{0\}$ is equipped with the distance $d(x,y)=|x^{-1}-y^{-1}|$, so that bounded sets in $\bR_0$ are in fact bounded away from 0. We let $\widetilde{J}(F)=J(F) -\mu(F)$ be the compensated version of $J$, for any bounded Borel set $F$ in $\bR_{+} \times \bR^d \times \bR_{0}$.

\bigskip

In the first part of this article (Section \ref{section-fin-var}), we consider a (finite-variance) {\em L\'evy white noise} $L=\{L(B);B \in \cB_{b}(\bR_{+}\times \bR^d)\}$, defined by:
\begin{equation}
\label{def-L1}
L(B)=\int_{B \times \bR_0}z \widetilde{J}(dt,dx,dz), 
\end{equation}
where $\cB_b(\bR_+ \times \bR^d)$ is the class of bounded Borel subsets of $\bR_{+}\times \bR^d$, and $\nu$ satisfies:
\begin{equation}
\label{def-m2}
m_2:=\int_{\bR_0}z^2 \nu(dz)<\infty.
\end{equation}

We refer the reader to Section \ref{subsect-prelim} for the definition of the stochastic integral with respect to $L$, which uses the concept of predictability.

\medskip

We consider the fractional stochastic wave equation on $D$, driven by noise $L$:
\begin{align}
\label{fSWE}
	\begin{cases}
		\dfrac{\partial^2 u}{\partial t^2} (t,x)
		=  -(-\Delta)^{\gamma}u(t,x) + b\big(u(t,x)\big) + \sigma\big(u(t,x)\big) \dot{L}(t,x), \
		t>0, \ x \in D, \\
		u(0,x) = 0, \quad \dfrac{\partial u}{\partial t}(0,x)=0, \quad x \in D, \\
u(t,x)=0, \quad  t > 0, \ x \in \partial D.
	\end{cases}
\end{align}

We have the following definition.

\begin{definition}
{\rm 
A process $\{u(t,x);t\geq 0,x\in D\}$ is a \underline{local solution} of equation \eqref{fSWE} if it is predictable and there exists a stopping time $\tau$ such that, for any $t\geq 0$ and $x\in D$,
\[
u(t,x)=\int_0^t \int_{D}G_{t-s}(x,y) b\big(u(s,y)\big)dyds+  \int_0^t \int_{D}
G_{t-s}(x,y)\sigma\big(u(s,y)\big)L(ds,dy),
\]
almost surely (a.s.) on the event $\{t<\tau\}$, where the stochastic integral is the It\^o integral. If we can take $\tau=\infty$, then $u$ is a \underline{global solution}.
}
\end{definition}

In view of Lemma \ref{fact1-lem}, the fact that $u$ is a local solution means that there exists a stopping time $\tau$ such that, for any $t\geq 0$ and $x \in D$, 
\begin{align}
1_{\{t<\tau\}} u(t,x)& =1_{\{t<\tau\}} \int_0^t \int_{D}G_{t-s}(x,y) b\big(u(s,y)\big)dyds\\
\label{def-local-sol}
& \quad \quad + \ 1_{\{t<\tau\}} \int_0^t \int_{D}
G_{t-s}(x,y)\sigma\big(u(s,y)\big) L(ds,dy) \quad \mbox{a.s.}
\end{align}

We introduce the following assumption:

\begin{assumption}
\label{assumpt}
{\rm 
 (i) $b$ and $\sigma$ are {\em locally Lipschitz}, i.e. for any $N>0$, there exist constants $C_{b,N},C_{\sigma,N}>0$ such that for any $\xi,\eta\in [-N,N]$,
\[
|b(\xi)-b(\eta)|\leq C_{b,N}|\xi-\eta| \quad \mbox{and} \quad |\sigma(\xi)-\sigma(\eta)|\leq C_{\sigma,N}|\xi-\eta|,
\]
(ii) $b$ and $\sigma$ {\em have linear growth}, i.e. there exist constants $D_{b},D_{\sigma}>0$ such that for any $\xi \in \bR$,
\begin{equation}
\label{lin-grow}
|b(\xi)|\leq D_{b}(1+|\xi|) \quad \mbox{and} \quad |\sigma(\xi)|\leq D_{\sigma}(1+|\xi|).
\end{equation}
}
\end{assumption}

We recall that a function $f:\bR_{+}\to \bR$ is called ``c\`adl\`ag'' if it is right-continuous and has left limits. We are now ready to state the first main result of this article.

\begin{theorem}
\label{main}
Let $L$ be the L\'evy white noise given by \eqref{def-L1}. Assume that $\nu$  satisfies \eqref{def-m2}.
If $\gamma>d$ and Assumption \ref{assumpt} holds, then for any $r \in (\frac{d}{2}, \gamma - \frac{d}{2})$ fixed, equation \eqref{fSWE} has a unique
global solution $u$ which satisfies
\begin{equation}
\label{u-cadlag}
\bP\big(u(t,\cdot) \in H_r(D) \ \mbox{for any $t\geq 0$ and $t \mapsto \|u(t, \cdot) \|_{H_r(D)}$ is c\`adl\`ag}\big)=1.
\end{equation}
\end{theorem}

We explain briefly the strategy for constructing the global solution to equation \eqref{fSWE}. In the case of equations with locally Lipschitz coefficients, driven by space-time Gaussian white noise, a common procedure for equations on bounded domains (used for instance in \cite{DKZ19,FN21,CFHS}) is to consider the stopping time:
\[
T_N=\inf\{t> 0; \sup_{x\in D}|u_N(t,x)|>N\}, \quad (\inf \emptyset=\infty)
\]
where $u_N$ is the solution of the equation with truncated coefficients $(b_N,\sigma_N)$, and identify sufficient conditions on $(b,\sigma)$ under which $\lim_{N \to \infty} T_N = \infty$ almost surely. 
This method uses the fact that the solution $u_N$ is continuous in $(t,x)$, which implies that
$u_N(t,\cdot) \in L^{\infty}(D)$ for all $t\geq 0$, and hence $T_N$ is well-defined.
In the recent preprint \cite{FKN24}, a different method was used, which shows the convergence of the sequence $\{u_N(t,x)\}_{N\geq 1}$ in $L^2(\Omega)$ uniformly in $(t,x)$, without using the stopping time $T_N$. This method relies on proving some exponential bounds on the tail probabilities of $u_N(t,x)$, and a careful analysis on the dependence on $N$ in these bounds. 

In the case of equations driven by L\'evy noise, the solution may not be bounded (nor continuous) on $D$, and hence $T_N$ is not well-defined. To circumvent this difficulty, we prove that $u_N$ has a modification $\tu_{N}$ with values in the Sobolev space $H_r(D)$ with $0<r<\gamma-\frac{d}{2}$, for which the map $t \mapsto \|\tu_N(t,\cdot)\|_{H_r(D)}$ is c\`adl\`ag. Then, we consider the stopping time:
\[
\tau_N=\inf\{t> 0; \|\tu_N(t,\cdot)\|_{H_r(D)} >N\} \quad (\inf \emptyset=\infty).
\]
Assuming in addition that $r>d/2$ (which forces the restriction $\gamma>d$), we use the embedding of $H_r(D)$ into $L^{\infty}(D)$, and we provide an exponential bound on the second moment of $\sup_{t\leq T} \|\tu_N(t,\cdot)\|_{H_r(D)}$ which does not depend on $N$ (see Theorem \ref{tu-th2} below). Finally, as in \cite{DKZ19}, we show that with probability 1:
(i) $u_N(t,x)=u_{N+1}(t,x)$ if $t<\tau_N$; 
(ii) $\lim_{N \to \infty}\tau_N=\infty$.
This procedure allows us to construct a global solution by pasting the solutions $(u_N)_{N\geq 1}$, i.e. defining $u(t,x)=u_{N}(t,x)$ for $\tau_{N-1} \leq t <\tau_N$, with $\tau_0=0$. Property (i) ensures that $u(t,x)= u_{N}(t,x)$ if $t<\tau_N$, while (ii) shows that $u$ is  global.

\bigskip

In the second part of this paper (Section \ref{section-inf}), we assume that the noise is given by a (pure-jump, no drift) {\em L\'evy basis} $\Lambda=\{\Lambda(A);A \in \cP_b\}$, defined by: 
\begin{equation}
\label{def-L2}
\Lambda(A)=\int_0^{\infty}\int_{\bR^d} \int_{\{|z|\leq 1\}} 1_{A}(t,x) z \widetilde{J}(dt,dx,dz)+
\int_0^{\infty}\int_{\bR^d} \int_{\{|z|> 1\}} 1_{A}(t,x) z J(dt,dx,dz),
\end{equation}
where $J$ and $\widetilde{J}$ are as above, and we assume that $\nu$ is {\em symmetric}. $\nu$ may not satisfy \eqref{def-m2}.
Here $\cP_b$ is the class of bounded predictable sets, given by Definition \ref{def-pred} below.
We refer the reader to Section \ref{subsect-Levy-bases} for the definition and properties of the stochastic integral with respect to $\Lambda$.

\begin{remark}
{\rm
(i) The process $\{Z(B)=\Lambda(\Omega \times B);B \in \cB_b(\bR_{+}\times \bR^d)\}$ is an ID independently scattered random measure, as defined in \cite{RR89}. $Z(B)$ may not have any moments. 

(ii)
If the measure $\nu$ is given by:
\[
\nu(dz)=\frac{1}{2}\alpha|z|^{-\alpha-1}dz \quad \mbox{for some} \quad \alpha \in (0,2),
\]
we say that $\Lambda$ is a {\em symmetric $\alpha$-stable (S$\alpha$S) L\'evy basis}.
The process $\{Z(B);B \in \cB_b(\bR_{+}\times \bR^d)\}$ is a S$\alpha$S random measure, as defined in \cite{ST94}.
}
\end{remark}
 
We consider the fractional stochastic wave equation on $D$, with noise $\Lambda$: 
\begin{align}
\label{fSWE2}
	\begin{cases}
		\dfrac{\partial^2 u}{\partial t^2} (t,x)
		=  -(-\Delta)^{\gamma}u(t,x) + b\big(u(t,x)\big) + \sigma\big(u(t,x)\big) \dot{\Lambda}(t,x), \
		t>0, \ x \in D, \\
		u(0,x) = 0, \quad \dfrac{\partial u}{\partial t}(0,x)=0, \quad x \in D, \\
u(t,x)=0, \quad  t > 0, \ x \in \partial D.
	\end{cases}
\end{align}

An analogue of \eqref{fSWE2} on $\bR^d$ was studied in \cite{DS05}, in the Gaussian case.

\begin{definition}
{\rm 
A process $\{u(t,x);t\geq 0,x\in D\}$ is a \underline{local solution} of equation \eqref{fSWE2} if it is predictable and there exists a stopping time $\tau$ such that, for any $t\geq 0$ and $x\in D$,
\[
u(t,x)=\int_0^t \int_{D}G_{t-s}(x,y) b\big(u(s,y)\big)dyds+  \int_0^t \int_{D}
G_{t-s}(x,y)\sigma\big(u(s,y)\big)\Lambda(ds,dy),
\]
almost surely (a.s.) on the event $\{t<\tau\}$, where the stochastic integral is interpreted in the sense of Definition \ref{def-integrable}. If we can take $\tau=\infty$, then $u$ is a \underline{global solution}.
}
\end{definition}

We are now ready to state the second main result of this article.

\begin{theorem}
\label{main2}
Let $\Lambda$ be a L\'evy basis given by \eqref{def-L2}. Assume that $\nu$ is symmetric and satisfies \eqref{Levy-measure}.
If $\gamma>d$, and Assumption \ref{assumpt} holds, then for any $r \in (\frac{d}{2}, \gamma - \frac{d}{2})$ fixed, equation \eqref{fSWE2} has a unique  global solution $u$ 
which satisfies \eqref{u-cadlag}.
%such that the map $t \mapsto \|u(t, \cdot) \|_{H_r(D)}$ is c\`adl\`ag on $\bR_+$, almost surely.
\end{theorem}

\begin{corollary}
Let  $\Lambda$ be a S$\alpha$S L\'evy basis with $\alpha \in (0,2)$.
If $\gamma>d$ and Assumption \ref{assumpt} holds, then for any $r \in (\frac{d}{2}, \gamma - \frac{d}{2})$ fixed, equation \eqref{fSWE2} has a unique global solution $u$ which satisfies \eqref{u-cadlag}.
\end{corollary}

\begin{remark}
{\rm
Due to the restriction $\gamma>d$, 
the methods presented in this article do not apply to the stochastic wave equation (for which $\gamma=1$).
}
\end{remark}

\begin{remark}
	{\rm Article \cite{CDH19} studied the stochastic heat equation on $(0,T) \times D$, driven by a (pure-jump) L\'evy basis, with drift coefficient $b=0$, globally Lipschitz diffusion coefficient $\sigma$, 
and Dirichlet boundary conditions, where $D$ is a $C^{\infty}$-regular domain in $\bR^d$. By Proposition 2.17 {\em ibid.}, this equation has a global solution, provided that
\[
\int_{|z|\leq 1}|z|^p \nu(dz)<\infty \quad \mbox{for some $0<p<1+\frac{2}{d}$.}
\]
Theorem 2.19 {\em ibid.} shows that this solution has a c\`adl\`ag modification with values in $H_r(D)$, for any $r < -\tfrac{d}{2}$. This constraint on $r$ is optimal. To see this, it suffices to consider the basic case of a compound-Poisson L\'evy white noise in dimension $d=1$; see Remark 2.6 in \cite{CDH19}. Consequently, the techniques developed in this present work cannot be applied to the parabolic analogue of \eqref{fSWE}, due to this optimal constraint and Theorem \ref{embed-th}.
	} 
\end{remark}

To prove Theorem \ref{main2}, we will use a strategy that has been used in the literature for SPDEs driven by heavy tailed L\'evy noise. We first consider the truncated noise $\Lambda^K$, which is obtained by removing the ``large'' jumps (which exceed a fixed value $K$). Due to the symmetry assumption on $\nu$, $\Lambda^K$ becomes a finite variance L\'evy noise, as \eqref{def-L1}. By Theorem \ref{main}, equation \eqref{fSWE} with noise $L$ replaced by $\Lambda^K$ has a global solution $u^K$. We show that the solutions $(u^K)_{K\geq 1}$ are consistent, in the sense that $u^K(t,x)=u^{K+1}(t,x)$ if $t<\tau^K$, where $\tau^K$ is the first time when $J$ has a jump that exceeds $K$. Finally, by pasting the solutions $(u^K)_{K\geq 1}$, we obtain a global solution for equation \eqref{fSWE}.

%\textcolor{brown}{RB: Juan, please explain here why we cannot consider the fractional heat %operator $\dfrac{\partial }{\partial t} +(-\Delta)^{\gamma}$.}
\bigskip

The rest of the article is organized in two sections, which are dedicated to the proofs of Theorem \ref{main} (Section \ref{section-fin-var}), respectively Theorem \ref{main2} (Section \ref{section-inf}). The appendix contains some auxiliary results.

\section{The finite variance case}
\label{section-fin-var}

In this section, we consider the equation driven by the finite variance L\'evy noise $L$. In Section \ref{subsect-prelim}, we introduce some preliminaries about It\^o integration with respect to $L$. In Section \ref{subsect-glob}, we examine the equation with noise $L$ and globally Lipschitz coefficients $(b,\sigma)$. Finally, in Section \ref{section-loc-Lip}, we present the proof of Theorem \ref{main}.

\subsection{Preliminaries}
\label{subsect-prelim}

In this section, we include some background material which is used in this paper.

\medskip

We start by recalling that the filtration induced by $J$ is given by:
\begin{equation}
\label{def-filtr}
\cF_t=\sigma\big\{J([0,s] \times A \times B);s \in [0,t],A \in \cB_b(\bR^d),B \in \cB_b(\bR_0)\big\} \vee \cN,
\end{equation}
where $\cN$ is the class of $\bP$-negligible sets, $\cB_b(\bR^d)$ is the class of bounded Borel sets in $\bR^d$, and $\cB_b(\bR_0)$ is the class of bounded Borel subsets of $\bR_0$.
The filtration $(\cF_t)_{t\geq 0}$ is right-continuous, i.e.
\[
\cF_t=\cF_{t+} \ \mbox{for all $t\geq 0$}, \quad \mbox{where} \quad \cF_{t+}:=\bigcap_{s\leq t} \cF_s.
\]
All stopping times and martingales considered in this article are with respect to 
 $(\cF_t)_{t\geq 0}$.

\medskip

We now introduce the concepts of predictable sets and predictable  processes. We start by recalling that an {\em elementary process} is a linear combination of processes of the form
\[
X(t,x)=Y 1_{(a,b]}(t) 1_{A}(x), \quad t\geq 0,x \in \bR^d,
\]
where $0\leq a<b$, $Y$ is $\cF_a$-measurable and $A \in \cB_d(\bR^d)$. 

\begin{definition}
\label{def-pred}
{\rm
The {\em predictable $\sigma$-field} $\cP$ on $\Omega \times \bR_{+} \times \bR^d$ is the $\sigma$-field generated by elementary processes. 
A set $A \in \cP$ is called {\em predictable}.
We  let $\cP_b$ be the class of all bounded predictable sets $A$, i.e. sets $A \in \cP$ such that $A \subseteq \Omega \times [0,k] \times [-k,k]^d$ for some $k>0$. 
A process $\{X(t,x);t\geq 0,x\in \bR^d\}$ is called {\em predictable} if it is $\cP$-measurable.}
\end{definition}

Recall that the noise $L$ is given by \eqref{def-L1}.
Then, for any set $B \in \cB_b(\bR_+ \times \bR^d)$,
\[
\bE[L(B)]=0 \quad \mbox{and} \quad \bE[|L(B)|^2]=m_2 |B|,
\]
where $|B|$ is the Lebesgue measure of $B$. Therefore, we can define the {\em It\^o integral} $\int H dL$ with respect to $L$, for any predictable process $H$ such that
$
\bE \left[\int_0^T \int_{\bR^d} |H(t,x)|^2 dxdt\right]<\infty$ for all $T>0$. This integral is an isometry:
\begin{equation}
\label{isometry}
\bE \left[\left|\int_0^T \int_{\bR^d}H(t,x)L(dt,dx)\right|^2\right]=m_2 \bE \left[\int_0^T \int_{\bR^d}|H(t,x)|^2 dxdt\right],
\end{equation}
and the process $\{\int_0^t \int_{\bR^d} H(s,x)L(ds,dx);t\geq 0\}$ is a square-integrable martingale.

\subsection{The globally Lipschitz case}
\label{subsect-glob}

Because our method is based on truncating the coefficients $b$ and $\sigma$, we consider first the equation with globally Lipschitz coefficients. Therefore,
in this sub-section, we assume that $b$ and $\sigma$ are globally Lipschitz, with Lipschitz constants $L_b$ and $L_{\sigma}$: for any $\xi,\eta\in \bR$,
\[
|b(\xi)-b(\eta) |\leq L_{b}|\xi-\eta| \quad \mbox{and} \quad |\sigma(\xi)-\sigma(\eta)|\leq L_{\sigma}|\xi-\eta|.
\]
Clearly, this implies that $b$ and $\sigma$ satisfy the linear growth condition \eqref{lin-grow} with constants $D_b=\max\{L_b,b(0)\}$ and $D_{\sigma}=\max\{L_{\sigma},\sigma(0)\}$.

%The following result is proved by classical methods.

\begin{theorem}
\label{exist-th}
Let $L$ be the L\'evy white noise given by \eqref{def-L1}.
Assume that $b$ and $\sigma$ are globally Lipschitz and 
$\gamma>d$. Then, equation \eqref{fSWE} has a unique global solution $u$  satisfying 
\begin{equation}
	\label{def-KT}
	K_T:=\sup_{t \in [0,T]}\sup_{x \in D}\bE[|u(t,x)|^2]<\infty \quad \mbox{for all $T>0$}.
\end{equation}
More precisely, for any $t\geq 0$ and $x \in D$, 
\[
u(t,x)=\int_0^t \int_{D} G_{t-s}(x,y)b\big( u(s,y)\big)dyds+\int_0^t \int_{D}
G_{t-s}(x,y)\sigma\big( u(s,y)\big)L(ds,dy) \quad \mbox{a.s.}
\]
Consequently, $\int_D \bE [|u(t,x)|^2]dx<\infty$, and hence, $u(t,\cdot) \in L^2(D)$ a.s. for any $t>0$.
\end{theorem}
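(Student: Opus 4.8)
\medskip
\noindent\textbf{Proof strategy.} The plan is a Picard iteration in the space of predictable processes with finite second moment, the only analytic input being the integrability \eqref{square-G} of the Green function, which holds here because $\gamma>d/2$. Set $u_0(t,x)\equiv 0$ and, recursively,
\[
u_{n+1}(t,x)=\int_0^t\!\!\int_D G_{t-s}(x,y)\,b\big(u_n(s,y)\big)\,dy\,ds+\int_0^t\!\!\int_D G_{t-s}(x,y)\,\sigma\big(u_n(s,y)\big)\,L(ds,dy).
\]
One checks inductively that each $u_n$ has a predictable version: assuming $u_n$ is predictable with $\sup_{s\le T}\sup_{x\in D}\bE[|u_n(s,x)|^2]<\infty$, the integrand $(s,y)\mapsto G_{t-s}(x,y)\sigma(u_n(s,y))$ is predictable and, by the linear growth of $\sigma$ together with \eqref{square-G}, square integrable, so the It\^o integral is well defined; the deterministic integral is finite by Cauchy--Schwarz and \eqref{square-G}; and a standard stochastic Fubini / approximation-by-elementary-integrands argument shows that $(t,x,\omega)\mapsto u_{n+1}(t,x)$ admits a jointly measurable adapted — hence predictable — modification.

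\medskip
\noindent\textbf{A priori second-moment bound, uniform in $n$.} Put $M_n(t):=\sup_{x\in D}\bE[|u_n(t,x)|^2]$ and $g(r):=\sup_{x\in D}\int_D G_r^2(x,y)\,dy$, so $\int_0^T g(r)\,dr<\infty$ by \eqref{square-G}. Bounding $|u_{n+1}(t,x)|^2$ by twice the square of each term, applying the Cauchy--Schwarz inequality in $L^2(D\times[0,t])$ to the drift term (note $G$ changes sign, so one integrates $G^2$ rather than $|G|$), the It\^o isometry to the stochastic term, and then the linear growth bound \eqref{lin-grow}, one obtains a constant $C=C(T,m_2,D_b,D_\sigma,|D|)$ with
\[
M_{n+1}(t)\le C\int_0^t\big(1+M_n(s)\big)\big(1+g(t-s)\big)\,ds,\qquad t\in[0,T].
\]
Since $r\mapsto 1+g(r)\in L^1([0,T])$, a Gronwall-type lemma for Volterra inequalities with integrable kernel (splitting $[0,T]$ into subintervals on which $\int g$ is small, or iterating the inequality and summing the iterated kernels) yields $\sup_{n\ge0}\sup_{t\in[0,T]}M_n(t)<\infty$ for every $T>0$.

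\medskip
\noindent\textbf{Convergence, identification of the limit, uniqueness.} With $\Delta_n(t):=\sup_{x\in D}\bE[|u_{n+1}(t,x)-u_n(t,x)|^2]$, the same estimates with the global Lipschitz property of $b,\sigma$ in place of linear growth give $\Delta_n(t)\le C\int_0^t\Delta_{n-1}(s)(1+g(t-s))\,ds$; iterating and using that the iterated convolution kernels $(1+g)^{*n}$ are summable on $[0,T]$ shows $\sum_n\sup_{t\le T}\Delta_n(t)<\infty$, with decay fast enough that $\sum_n\big(\sup_{t\le T,\,x\in D}\bE|u_{n+1}(t,x)-u_n(t,x)|^2\big)^{1/2}<\infty$. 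Hence $u_n(t,x)$ converges in $L^2(\Omega)$, uniformly in $(t,x)\in[0,T]\times D$, to some $u(t,x)$; $u$ inherits a predictable version and satisfies \eqref{def-KT}. Passing to the limit in the recursion — via the It\^o isometry for the stochastic term, Cauchy--Schwarz for the drift term, the Lipschitz bounds $\bE|b(u_n)-b(u)|^2\le L_b^2\,\bE|u_n-u|^2$, $\bE|\sigma(u_n)-\sigma(u)|^2\le L_\sigma^2\,\bE|u_n-u|^2$, and the uniform $L^2$-convergence of $u_n$ — shows $u$ satisfies the mild equation a.s.\ for each $(t,x)$. Uniqueness in the class \eqref{def-KT} follows likewise: for two such solutions $u,v$, $\delta(t):=\sup_{x\in D}\bE|u(t,x)-v(t,x)|^2$ satisfies $\delta(t)\le C\int_0^t\delta(s)(1+g(t-s))\,ds$, forcing $\delta\equiv0$. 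Finally, since $D$ is bounded, $\int_D\bE[|u(t,x)|^2]\,dx\le|D|\,\sup_{x\in D}\bE[|u(t,x)|^2]<\infty$ by \eqref{def-KT}, and Tonelli's theorem gives $\bE[\int_D|u(t,x)|^2\,dx]<\infty$, so $u(t,\cdot)\in L^2(D)$ a.s.\ for every $t>0$.

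\medskip
\noindent\textbf{Main obstacle.} The delicate point is the Gronwall/Volterra control: the wave Green function changes sign, so $G_{t-s}(x,y)\,dy\,ds$ cannot be treated as a finite measure and one must work with $G^2$; the resulting kernel $g$ is only known to be integrable in time, and the hypothesis $\gamma>d/2$ enters precisely through \eqref{square-G} to secure this integrability, which is what makes both the uniform-in-$n$ bound and the convergence of the Picard scheme go through. Everything else — predictability of the iterates, passage to the limit, and the final $L^2(D)$ statement — is routine.
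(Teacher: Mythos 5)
Your proposal is correct and is essentially the paper's own proof: the paper simply invokes ``classical methods (e.g.\ the proof of Theorem 13 of \cite{dalang99}), using Picard's iterations and relation \eqref{square-G}'', which is precisely the Picard scheme, It\^o isometry/Cauchy--Schwarz estimates, and extended Gronwall lemma for Volterra inequalities with integrable kernel (Lemma 15 of \cite{dalang99}) that you spell out. The only imprecision is the aside that jointly measurable plus adapted yields predictability --- the standard route is to use the $L^2(\Omega)$-continuity of $(t,x)\mapsto u_n(t,x)$ to produce a predictable version --- but this is exactly the classical argument the paper is referencing, so nothing of substance is missing.
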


\begin{proof}
This follows by classical methods (e.g. the proof of Theorem 13 of \cite{dalang99}), using Picard's iterations and relation \eqref{square-G}.
\end{proof}

We recall the following definition.

\begin{definition}
{\rm  We say that $\tu$ is a {\em modification} of $u$ if
\begin{equation}
\label{u-tu}
\bP\big(u(t,x)=\tu(t,x) \big)=1 \quad \mbox{for any $t \geq 0$ and $x \in D$}.
\end{equation}
}
\end{definition}

\begin{remark}
{\rm If $u$ is a global solution of \eqref{fSWE} and $\tu$ is a modification of $u$, then  
$\tu$ is also a global solution of \eqref{fSWE}.}
\end{remark}

We recall that $0<\lambda_1\leq \lambda_2 \leq \ldots$ denote the eigenvalues of $-\Delta$ with Dirichlet boundary conditions, and $(e_k)_{k\geq 1}$ are the corresponding normalized eigenfunctions.
We will use the following asymptotic property of $(\lambda_k)_{k\geq 1}$, called the {\em Weyl's Law} (see Theorem 1.11 of \cite{Birman}):
\begin{equation}
\label{weyl}
\lim_{k \to \infty}\frac{\lambda_k}{k^{2/d}}=\frac{4\pi \Gamma(1+\frac{2}{d})^{2/d}}{|D|^{2/d}}.
\end{equation}
Hence, there exist $c_1,c_2>0$ such that
$c_1 k^{2/d} \leq \lambda_k \leq c_2 k^{2/d}$ for all $k\geq 1$, and so,
\begin{equation}
\label{sum-lam}
\sum_{k \geq 1}\lambda_k^{r-\gamma}<\infty \quad \mbox{if and only if} \quad r<\gamma-\frac{d}{2}.
\end{equation}
Moreover, by relation (A.2) of \cite{CD23},
\begin{equation}
\label{en-bound}
\|e_k\|_{L^{\infty}(D)} \leq \left(\frac{2e \lambda_k}{\pi d}\right)^{d/4}.
\end{equation}
Weyl's law and \eqref{en-bound} are used to prove property \eqref{square-G} of the Green function $G$.

\begin{lemma}
\label{lem-square-G}
If $\gamma > d$, then relation \eqref{square-G} holds.
\end{lemma}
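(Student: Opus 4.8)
The plan is to establish \eqref{square-G} by expanding $G_t$ in the eigenbasis, applying Parseval's identity in $y$, and then controlling the resulting series using Weyl's law. First I would fix $T>0$ and use the spectral representation \eqref{def-G}: for fixed $t\geq 0$ and $x \in D$, the function $y \mapsto G_t(x,y)$ has Fourier coefficients $\frac{\sin(\lambda_k^{\gamma/2}t)}{\lambda_k^{\gamma/2}} e_k(x)$ with respect to the orthonormal basis $(e_k)_{k\geq 1}$ of $L^2(D)$. Hence by Parseval,
\[
\int_D G_t^2(x,y)\,dy = \sum_{k\geq 1} \frac{\sin^2(\lambda_k^{\gamma/2}t)}{\lambda_k^{\gamma}}\, e_k(x)^2.
\]
Integrating in $t$ over $[0,T]$ and using $\int_0^T \sin^2(\lambda_k^{\gamma/2}t)\,dt \leq T$ gives
\[
\int_0^T \int_D G_t^2(x,y)\,dy\,dt \leq T \sum_{k\geq 1}\lambda_k^{-\gamma}\, e_k(x)^2.
\]

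The next step is to take the supremum over $x \in D$, which requires a uniform bound on $e_k(x)^2$. Here I would invoke a standard eigenfunction sup-norm bound: for the Dirichlet Laplacian on a bounded domain (with the cone property when $\gamma \in (\tfrac d2, d]$, as noted in Remark \ref{cone}), one has $\|e_k\|_{L^\infty(D)}^2 \leq C \lambda_k^{d/2}$ for some constant $C$ depending only on $D$ and $d$ (this is the classical $L^\infty$ estimate on eigenfunctions, obtainable from Sobolev embedding $H_r(D) \hookrightarrow L^\infty(D)$ for $r > d/2$ together with $\|e_k\|_{H_r}^2 = \lambda_k^r$, i.e. Theorem \ref{embed-th} applied with $r$ slightly above $d/2$, which gives $\|e_k\|_{L^\infty}^2 \lesssim \lambda_k^{d/2+\epsilon}$; the sharper exponent $d/2$ is the Hörmander bound but the $\epsilon$-loss version already suffices provided we then use $\gamma > d/2 + \epsilon$, so some care is needed near the endpoint). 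Substituting,
\[
\sup_{x\in D}\int_0^T\int_D G_t^2(x,y)\,dy\,dt \leq CT \sum_{k\geq 1} \lambda_k^{d/2-\gamma}.
\]

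Finally, by Weyl's law \eqref{weyl} (equivalently the two-sided bound $c_1 k^{2/d}\leq \lambda_k \leq c_2 k^{2/d}$) and the criterion \eqref{sum-lam}, the series $\sum_{k\geq 1}\lambda_k^{d/2-\gamma}$ converges precisely because $d/2 < \gamma - d/2$ is equivalent to $d/2 - \gamma < -d/2$, i.e. the exponent $r = d/2$ satisfies $r < \gamma - \tfrac d2$. This yields \eqref{square-G} and completes the proof.

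The step I expect to be the main obstacle is the uniform-in-$x$ eigenfunction bound $\|e_k\|_{L^\infty(D)}^2 \lesssim \lambda_k^{d/2}$ and, more precisely, reconciling the endpoint: the clean Sobolev-embedding argument only delivers $\lambda_k^{d/2+\epsilon}$, which would force a small loss in the range of $\gamma$. To stay inside the stated hypothesis $\gamma > d/2$ exactly, one must either cite the sharp Hörmander/Avakumović $L^\infty$ eigenfunction estimate $\|e_k\|_\infty \lesssim \lambda_k^{(d-1)/4}$ (which is even stronger and more than enough), or—cleaner for a bounded Lipschitz domain—integrate the Parseval identity in $x$ as well first (using $\int_D e_k(x)^2\,dx = 1$) to bound $\int_D \int_0^T\int_D G_t^2\,dy\,dt\,dx$ and then upgrade to a sup bound via an equicontinuity or kernel-regularity argument; I would go with citing the classical sup-norm eigenfunction estimate, since it is standard and makes the exponent bookkeeping transparent. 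The rest of the argument is entirely routine: Parseval, the trivial bound on $\int_0^T \sin^2$, and the convergence criterion \eqref{sum-lam} already recorded in the excerpt.
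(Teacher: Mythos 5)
Your first two steps (Parseval in $y$, then bounding $\sin^2\le 1$) match the paper exactly. The gap is in the last step, and it is fatal for the range of $\gamma$ claimed. You reduce the problem to $\sum_{k\ge 1}\lambda_k^{-\gamma}e_k(x)^2$ and then insert a termwise bound $\|e_k\|_{L^\infty(D)}^2\lesssim \lambda_k^{d/2}$, arriving at $\sum_k \lambda_k^{d/2-\gamma}$. By \eqref{sum-lam} (i.e.\ $\lambda_k\asymp k^{2/d}$), this series converges if and only if $d/2<\gamma-d/2$, that is $\gamma>d$ --- not $\gamma>d/2$. Your sentence ``the series converges precisely because $d/2<\gamma-d/2$'' asserts an inequality that does \emph{not} follow from the hypothesis $\gamma>\tfrac d2$ (take $\gamma=0.6\,d$). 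The fallback you propose, the H\"ormander--Avakumovi\'c bound $\|e_k\|_\infty\lesssim\lambda_k^{(d-1)/4}$, gives $\sum_k\lambda_k^{(d-1)/2-\gamma}$, which requires $\gamma>d-\tfrac12$; this covers $\gamma>d/2$ only when $d=1$, so it is not ``more than enough'' in general. No termwise sup-norm estimate on individual eigenfunctions can close this gap, because such bounds ignore the averaging over eigenfunctions with comparable eigenvalues.

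The missing idea --- and what the paper actually does for $\tfrac d2<\gamma\le d$ --- is to use the \emph{spectral function} bound of Agmon: under the cone property, $V(u,x):=\sum_{\lambda_k\le u}|e_k(x)|^2\le C\,u^{d/2}$ uniformly in $x$. Since the number of eigenvalues below $u$ is itself of order $u^{d/2}$ by Weyl's law, this says $|e_k(x)|^2$ is $O(1)$ \emph{on average}, which is far stronger than any individual bound. Writing $\sum_k\lambda_k^{-\gamma}|e_k(x)|^2=\int u^{-\gamma}\,V(du,x)$ and integrating by parts against the Stieltjes measure $V(du,x)$ yields a bound by $C\gamma\int_{\lambda_1}^{\infty}u^{-\gamma-1+d/2}\,du$, which is finite precisely when $\gamma>\tfrac d2$. (For $\gamma>d$ the paper simply cites the uniform bound (A.2) of \cite{CD23}; your argument would also work in that restricted range.) As written, your proof establishes the conclusion only for $\gamma>d$ and therefore does not prove the lemma as stated.
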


\begin{proof} 
Using the expansion \eqref{def-G} and the fact that $(e_k)_{k\geq 1}$ form an orthonormal basis of $L^2(D)$, we have
\[
 \int_D G^2_t(x,y)dy=\sum_{k\geq 1} \frac{\sin^2(\lambda_k^{\gamma/2} t)}{\lambda_k^\gamma} e^2_k(x) \
  \leq 
 \sum_{k\geq 1} \lambda_k^{-\gamma} e^2_k(x).
\]
Then, %bounding the term $\sin^2(\lambda_k^{\gamma/2} t)$ by $1$, we end up with 
\begin{equation*}
 \int_0^T\left(\sup_{x\in D} \int_D G^2_t(x,y)dy\right)dt  \leq  T \sup_{x\in D}
 \sum_{k \geq 1} \lambda_k^{- \gamma} e^2_k(x) \leq T \sum_{k\geq 1} \lambda_k^{-\gamma} \|e_k\|_{L^{\infty}(D)}^2.
\end{equation*}
By \eqref{en-bound} and relation \eqref{sum-lam} with $r=d/2$, the last series converges, provided that $\gamma > d$.
\end{proof}

\begin{remark}
{\rm Under additional conditions, it is possible to show that relation
\eqref{square-G} holds for any $\gamma>d/2$. One such condition is the fact that $D$ satisfies {\em the cone property}, in which case, we can argue similarly to the proof of (A.7) of \cite{CD23} (using Theorem 8.2 of \cite{agmon}).
}
\end{remark}

For the next result, we refer the reader to Appendix \ref{app-Sob} for the definition of the Sobolev space $H_r(D)$. We denote by $|D|$ the Lebesque measure of $D$.

\begin{theorem}
\label{tu-th}
Let $L$ be the L\'evy white noise given by \eqref{def-L1}.
Assume that $b$ and $\sigma$ are globally Lipschitz and $\gamma>d$. Let $u$ be the solution of equation \eqref{fSWE}. Let $\frac{d}{2}<r<\gamma-\frac{d}{2}$ be arbitrary. Then, there exists a modification $\tu$ of $u$ such that, for any $T>0$,
\begin{equation}
\label{sup-Hr}
\bE\big[\sup_{t \in [0,T]} \|\tu(t,\cdot)\|_{H_r(D)}^2\big]\leq (T+1)T \widehat{C}_T (1+ K_T),
\end{equation}
where $K_T$ is given by \eqref{def-KT}, and 
\begin{equation}
\label{def-hatC}
\widehat{C}_T:=4\big(T |D| D_{b}^2+16 m_2 D_{\sigma}^2 \big)\sum_{k\geq 1}\lambda_k^{r-\gamma}.
\end{equation}
Consequently, $\bP\big(\tu(t,\cdot) \in H_r(D) \ \mbox{for any $t\geq 0$}\big)=1$. Moreover, the function $t \mapsto \|\tu(t,\cdot)\|_{H_r(D)}$ is c\`adl\`ag on $[0,\infty)$, with probability 1.
\end{theorem}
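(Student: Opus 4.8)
The plan is to construct the modification $\tu$ via its Fourier coefficients and then obtain the moment bound \eqref{sup-Hr} together with the c\`adl\`ag property. Write $u(t,x) = \sum_{k\geq 1} u_k(t) e_k(x)$ where $u_k(t) = \langle u(t,\cdot), e_k\rangle_{L^2(D)}$. Using the spectral representation \eqref{def-G} of $G$, one has $\int_D G_{t-s}(x,y) f(y)\,dy = \sum_k \frac{\sin(\lambda_k^{\gamma/2}(t-s))}{\lambda_k^{\gamma/2}} \langle f, e_k\rangle e_k(x)$, so the mild formulation in Theorem \ref{exist-th} gives, for each $k$,
\[
u_k(t) = \int_0^t \frac{\sin(\lambda_k^{\gamma/2}(t-s))}{\lambda_k^{\gamma/2}}\, b_k(s)\,ds + \int_0^t\int_D \frac{\sin(\lambda_k^{\gamma/2}(t-s))}{\lambda_k^{\gamma/2}}\, e_k(y)\,\sigma\big(u(s,y)\big)\,L(ds,dy),
\]
where $b_k(s) = \int_D b(u(s,y)) e_k(y)\,dy$. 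The point is that each $u_k$ is a well-defined real-valued process; I would define $\tu(t,x) := \sum_k u_k(t) e_k(x)$ as the candidate modification, checking via Fubini (justified by the $L^2(\Omega\times[0,T]\times D)$ bound from $K_T$ and \eqref{square-G}) that $\tu(t,x) = u(t,x)$ a.s.\ for each fixed $(t,x)$.

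Next I would bound $\bE[\sup_{t\leq T}\|\tu(t,\cdot)\|_{H_r(D)}^2] = \bE[\sup_{t\leq T}\sum_k \lambda_k^r |u_k(t)|^2]$. Rather than interchanging sup and sum naively, I would first estimate $\bE[\sup_{t\leq T}|u_k(t)|^2]$ for each $k$ and then sum. The drift term is handled deterministically: by linear growth $|b(u(s,y))|\leq D_b(1+|u(s,y)|)$, Cauchy–Schwarz, and $|\sin|\leq 1$, the square of the drift contribution to $u_k(t)$ is bounded by $\lambda_k^{-\gamma} T|D| D_b^2 \int_0^T(1+\ldots)$ after taking expectations, contributing the $T|D|D_b^2$ term in \eqref{def-hatC}. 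For the stochastic term, the process $M_k(t) := \int_0^t\int_D \frac{\sin(\lambda_k^{\gamma/2}(t-s))}{\lambda_k^{\gamma/2}} e_k(y)\sigma(u(s,y))\,L(ds,dy)$ is \emph{not} a martingale in $t$ because of the $t$-dependence in the integrand, but I would expand $\sin(\lambda_k^{\gamma/2}(t-s)) = \sin(\lambda_k^{\gamma/2}t)\cos(\lambda_k^{\gamma/2}s) - \cos(\lambda_k^{\gamma/2}t)\sin(\lambda_k^{\gamma/2}s)$ to write $M_k(t)$ as a bounded-in-$t$ combination of two genuine martingales $N_k^{(1)}(t), N_k^{(2)}(t)$ (the integrals with integrands $\cos(\lambda_k^{\gamma/2}s)e_k(y)\sigma(u(s,y))/\lambda_k^{\gamma/2}$ and likewise with $\sin$). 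Then Doob's $L^2$ inequality gives $\bE[\sup_{t\leq T}|N_k^{(i)}(t)|^2]\leq 4\bE[|N_k^{(i)}(T)|^2]$, and the Itô isometry for $L$ plus $\int_D e_k^2 = 1$ and linear growth of $\sigma$ yields $\bE[|N_k^{(i)}(T)|^2]\leq \lambda_k^{-\gamma} m_2 D_\sigma^2 \int_0^T(1+\sup_x\bE|u(s,x)|^2)\cdot|D|\,ds \lesssim \lambda_k^{-\gamma} m_2 D_\sigma^2 T|D|(1+K_T)$; tracking constants through the expansion $|\sin|,|\cos|\leq 1$ and $(a+b)^2\leq 2a^2+2b^2$ accounts for the factor $16$ on the $m_2 D_\sigma^2$ term. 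Summing over $k$ and using $\sum_k\lambda_k^{r-\gamma}<\infty$ from \eqref{sum-lam} (valid since $r<\gamma-\frac d2$) produces \eqref{sup-Hr}, and finiteness forces $\bP(\tu(t,\cdot)\in H_r(D)$ for all $t\in[0,T])=1$.

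For the c\`adl\`ag property I would argue as follows. Each martingale $N_k^{(i)}$ has a c\`adl\`ag modification (stochastic integrals against the compensated Poisson measure are c\`adl\`ag), and the deterministic drift parts of $u_k$ are continuous in $t$; hence each $u_k$ is c\`adl\`ag. It then suffices to show that the partial sums $S_n(t) := \sum_{k\leq n}\lambda_k^r|u_k(t)|^2$ converge to $\|\tu(t,\cdot)\|_{H_r(D)}^2$ uniformly in $t\in[0,T]$ with probability $1$, since a uniform limit of c\`adl\`ag functions is c\`adl\`ag. This follows from $\bE[\sup_{t\leq T}\sum_{k>n}\lambda_k^r|u_k(t)|^2]\leq (T+1)T\,\widehat C_T^{(>n)}(1+K_T)$ where $\widehat C_T^{(>n)}$ is the tail $\sum_{k>n}\lambda_k^{r-\gamma}\to 0$, which by Borel–Cantelli along a subsequence (and monotonicity in $n$ of the tail sup) gives the a.s.\ uniform convergence. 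Actually $\tu(t,\cdot)$ itself lives in $H_r(D)$ and I should verify $t\mapsto \tu(t,\cdot)$ is c\`adl\`ag \emph{in the $H_r(D)$ norm}, which is the same statement once I know the series $\sum_k u_k(t)e_k$ converges in $H_r(D)$ uniformly on $[0,T]$ in the above sense — so the norm function $t\mapsto\|\tu(t,\cdot)\|_{H_r(D)}$ is c\`adl\`ag as a composition with the (continuous) norm.

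The main obstacle I anticipate is the stochastic convolution term: because of the $t$-dependent kernel $\sin(\lambda_k^{\gamma/2}(t-s))$, $M_k(t)$ is not directly a martingale, so Doob's inequality does not apply until one performs the trigonometric splitting into honest martingales — and one must be careful that this splitting is compatible with the jump structure (it is, since the split integrands are still predictable and the isometry for $L$ is linear). A secondary subtlety is justifying all the Fubini interchanges (identifying $\tu$ with $u$, and equating $\|\tu(t,\cdot)\|_{H_r}^2$ with $\sum_k\lambda_k^r|u_k(t)|^2$) and the passage of $\sup_t$ inside the infinite sum, both of which are controlled by the a.s.\ uniform convergence of the tail established via the moment bound. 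Everything else — linear growth estimates, Itô isometry, constant bookkeeping for \eqref{def-hatC} — is routine.
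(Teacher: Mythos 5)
Your proposal is correct and follows essentially the same route as the paper's proof: Fourier coefficients of the mild formulation, the trigonometric splitting $\sin(\lambda_k^{\gamma/2}(t-s))=\sin(\lambda_k^{\gamma/2}t)\cos(\lambda_k^{\gamma/2}s)-\cos(\lambda_k^{\gamma/2}t)\sin(\lambda_k^{\gamma/2}s)$ to produce genuine martingales amenable to Doob's $L^2$ inequality and the It\^o isometry, summation over $k$ via $\sum_k\lambda_k^{r-\gamma}<\infty$, and the c\`adl\`ag property from a.s.\ uniform convergence of the partial sums of c\`adl\`ag Fourier modes. The only cosmetic differences are that the paper assembles the global-in-time modification by pasting the constructions over intervals $[N-1,N)$ (which is where the extra factor $T+1$ in \eqref{sup-Hr} originates) and that you should phrase the construction of $\tu$ explicitly in terms of the c\`adl\`ag modifications of the martingales rather than of $u_k$ itself, but neither point affects the validity of your argument.
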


\begin{proof}
{\em Step 1.} In this step we prove that for any $T>0$ fixed, there exists a process
$\{\tu(t,x);t\in [0,T],x\in D\}$ such that \eqref{u-tu} holds for any $t\in [0,T]$ and $x\in D$, and for which \eqref{sup-Hr} is satisfied. In {\em Step 2}, we will show that this process can be extended to $[0,\infty)$.  

\medskip

We define
\begin{align*}
I(t,x)&=\int_0^t \int_{D} G_{t-s}(x,y) b\big(u(s,y)\big) dyds \\
Z(t,x)&=\int_0^t \int_{D} G_{t-s}(x,y)\sigma\big(u(s,y)\big) L(ds,dy).
\end{align*}
By Theorem \ref{exist-th}, for any $t\geq 0$ and $x \in D$, we have:
\begin{equation}
\label{eq-tx}
u(t,x)=I(t,x)+Z(t,x) \quad \mbox{a.s.}
\end{equation}
where the negligible set depends on $(t,x)$.

We study $I$ first. The $k$-th Fourier coefficient of $I(t,\cdot)$ is:
\begin{align*}
I_k(t):=\cF_k [I(t,\cdot)]
=\langle I(t,\cdot),e_k\rangle_{L^2(D)} %=\int_D I(t,x)e_k(x)dx
&=\int_0^t \int_D \left(\int_D G_{t-s}(x,y)e_k(x) dx\right) b\big(u(s,y)\big)dyds\\
&%=\int_0^t \int_{D} \cF_k[G_t(\cdot,y)] b\big(u(s,y)\big)dyds
=\frac{1}{\lambda_k^{\gamma/2}} \int_0^t \int_{D} \sin\big(\lambda_k^{\gamma/2}(t-s)\big)e_k(y) b\big(u(s,y)\big)dyds,
\end{align*}
where for the last line we used the fact that:
\begin{equation}
\label{Fourier-G}
\cF_k [G_t(\cdot,y)]=\frac{\sin\big(\lambda_k^{\gamma/2}t\big)}{\lambda_k^{\gamma/2}}e_k(y).
\end{equation}
By the Cauchy-Schwarz inequality, and the linear growth property of $b$, 
\begin{align}
\nonumber
\sup_{t\leq T }\big|I_k(t)\big|^2 & \leq   \frac{T|D|}{\lambda_k^{\gamma}} \int_0^T \int_{D} \sin^2\big(\lambda_k^{\gamma/2}(t-s)\big) |e_k(y)|^2 |b\big(u(s,y)\big)|^2dyds \\
\label{bound-FI}
& \leq  \frac{2T|D| D_b^2}{\lambda_k^{\gamma}} \int_0^T \int_{D} |e_k(y)|^2 \big(1+|u(s,y|^2\big) dyds
\end{align}
and hence, since $\int_{D}|e_k(y)|^2dy=1$,
\begin{align}
\label{bound-I2-0}
\bE\big[\sup_{t\leq T}\big|I_k(t)\big|^2\big] 
& \leq   \frac{2T|D| D_b^2}{\lambda_k^{\gamma}}  \int_0^T \int_{D} |e_k(y)|^2 \big(1+\bE[|u(s,y)|^2]\big) dyds\\
\nonumber
& \leq \frac{2T|D| D_b^2}{\lambda_k^{\gamma}} \big(1+\sup_{t \in [0,T]}\sup_{x\in \bR}\bE[|u(t,x)|^2]\big) \int_0^T \int_{D} |e_k(y)|^2 dyds\\
\nonumber
& =\frac{2T^2|D| D_b^2}{\lambda_k^{\gamma}} \big(1+\sup_{t \in [0,T]}\sup_{x\in \bR}\bE[|u(t,x)|^2]\big)<\infty.
\end{align}
By definition, $\|I(t,\cdot)\|_{H_r(D)}^2=\sum_{k\geq 1}\lambda_k^r \big|I_k(t)\big|^2$, and hence
\begin{align}
\nonumber
%\label{bound-I1}
\bE\big[\sup_{t\leq T}\|I(t,\cdot)\|_{H_r(D)}^2\big] & \leq \sum_{k\geq 1}\lambda_k^r \bE \big[\sup_{t\leq T}\big|I_k(t)\big|^2\big] \\
\label{bound-I}
& \leq 2T^2|D| D_b^2
\sum_{k\geq 1}\lambda_k^{r-\gamma} \big(1+\sup_{t \in [0,T]}\sup_{x\in \bR}\bE[|u(t,x)|^2]\big)<\infty.
\end{align}
In particular, $I(t,\cdot)\in H_r(D)$ for all $t \in [0,T]$, with probability 1.

\medskip

Next, we treat $Z$. 
We first prove that for any $t \geq 0$ fixed, 
\begin{equation}
\label{Z-in-H}
Z(t,\cdot) \in H_r(D) \quad \mbox{a.s.}
\end{equation} 
Then, by Corollary \ref{point-rep}, for any $t \geq 0$ fixed, we have the following pointwise representation:
\begin{equation}
\label{point-rep-Z}
Z(t,x)=\sum_{k\geq 1}Z_k(t) e_k(x) \quad \mbox{for all $x \in D$ \ a.s.} 
\end{equation}
We denote by $\Omega_t^*$ the event of probability 1 where this holds.

We prove \eqref{Z-in-H}. By the stochastic Fubini theorem (Theorem A.3 of \cite{CDH19}) and \eqref{Fourier-G},
the $k$-th Fourier coefficient of $Z(t,\cdot)$ is:
\begin{align*}
Z_k(t) & := \cF_k [Z(t,\cdot)] =\langle Z(t,\cdot),e_k\rangle_{L^2(D)} \\ %=\int_D I(t,x)e_k(x)dx
& =\int_0^t \int_D \left(\int_D G_{t-s}(x,y)e_k(x) dx\right) \sigma\big(u(s,y)\big)L(ds,dy)\\
&%=\int_0^t \int_{D} \cF_k[G_t(\cdot,y)] b\big(u(s,y)\big)dyds
=\frac{1}{{\lambda_k^{\gamma/2}}} \int_0^t \int_{D} \sin\big(\lambda_k^{\gamma/2}(t-s)\big) e_k(y) \sigma\big(u(s,y)\big)L(ds,dy).
\end{align*}
Using the formula $\sin(a-b)=\sin a \cos b-\cos a \sin b$, we write
\begin{equation}
\label{def-FkZ}
Z_k(t)=\frac{1}{\lambda_k^{\gamma/2}} \big[\sin(\lambda_k^{\gamma/2}t) M^{(k)}(t) -
\cos(\lambda_k^{\gamma/2}t) N^{(k)}(t)\big],
\end{equation}
where 
\begin{align*}
M^{(k)}(t):=\int_0^t \int_{D} \cos\big(\lambda_k^{\gamma/2}s\big) e_k(y) \sigma\big(u(s,y)\big)L(ds,dy) \\
N^{(k)}(t):=\int_0^t \int_{D} \sin\big(\lambda_k^{\gamma/2}s \big) e_k(y) \sigma\big(u(s,y)\big)L(ds,dy).
\end{align*}
By the isometry property \eqref{isometry}, the linear growth property \eqref{lin-grow} of $\sigma$, and \eqref{def-KT}, we have
\begin{align*}
\bE|M^{(k)}(t)|^2 & =m_2 \int_0^t \int_{D} \cos^2\big(\lambda_k^{\gamma/2}s\big) e_k^2(y) \bE|\sigma\big(u(s,y)\big)|^2dyds\\
 & \leq 2m_2 D_{\sigma}^2 \int_0^t \int_{D} e_k^2(y)\big(1+ \bE|u(s,y)|^2\big) dyds \leq 2m_2 D_{\sigma}^2 T K_T.
\end{align*}
The same inequality holds for $N^{(k)}(t)$. Hence,
\[
\bE|Z_k(t)|^2 \leq \frac{2}{\lambda_k^{\gamma}} \Big(\bE |M^{(k)}(t)|^2 + \bE|N^{(k)}(t)|^2\Big)
\leq \frac{ 8m_2 D_{\sigma}^2 TK_T}{\lambda_k^{\gamma}},
\]
and $\bE\|Z(t,\cdot)\|_{H_r(D)}^2=\sum_{k\geq 1} \lambda_k^{r} \bE|Z_k(t)|^2 <\infty$, since $r<\gamma-\frac{d}{2}$. This proves \eqref{Z-in-H}.

\medskip
Next, we construct a modification $\widetilde{Z}$ of $Z$.
The martingales $M^{(k)}$ and $N^{(k)}$ have c\`adl\`ag modifications
$\widetilde{M}^{(k)}$ and $\widetilde{N}^{(k)}$; see e.g. Corollary 1 (Section 1.2) of \cite{protter}. We define:
\begin{equation}
\label{def-tZ}
\widetilde{Z}_k(t):=\frac{1}{\lambda_k^{\gamma/2}} \big[\sin(\lambda_k^{\gamma/2}t) \widetilde{M}^{(k)}(t)-
\cos(\lambda_k^{\gamma/2}t) \widetilde{N}^{(k)}(t)\big].
\end{equation}
Hence, the map $t \mapsto \widetilde{Z}_k(t)$ is c\`adl\`ag on $[0,T]$ (recall that a {\em c\`adl\`ag} function is a right continuous function with left limits). It follows that
\begin{align}
\label{Z-MN}
\bE\big[\sup_{t\leq T}|\widetilde{Z}_k(t)|^2\big] \leq \frac{2}{\lambda_k^{\gamma}} \left\{ 
\bE\big[\sup_{t\leq T}|\widetilde{M}^{(k)}(t)|^2\big]+\bE\big[\sup_{t\leq T}|\widetilde{N}^{(k)}(t)|^2\big]\right\}.
\end{align}
We apply Doob's maximal inequality (see, e.g., \cite[Thm. 3.8]{Karatzas}): for any non-negative and right-continuous submartingale $(M_t)_{t\in [0,T]}$ such that $\bE [M_T^p]<\infty$ for some $p>1$, it holds
\[
\bE\big[\sup_{t\leq T} M_t^p\big] \leq \left( \frac{p}{p-1}\right)^p\bE[M_T^p]. 
\]
Hence, using the linear growth property of $\sigma$, we obtain:
\begin{align}
\nonumber
\bE\big[\sup_{t\leq T} |\widetilde{M}^{(k)}(t)|^2 \big]  & \leq 4 \bE\big[|\widetilde{M}^{(k)}(T)|^2 \big]\\ \nonumber 
& =
4m_2 \bE \left[\int_0^T \int_{D}  \cos^2(\lambda_k^{\gamma/2} s) |e_k(y)|^2 \big|\sigma(u(s,y))\big|^2 dyds\right] \\
\label{bound-tM}
& \leq 8 m_2 D_{\sigma}^2  \bE\left[ \int_0^T \int_{D} |e_k(y)|^2 \big(1+|u(s,y)|^2 \big) dyds\right]\\
\nonumber
& \leq 8 m_2 D_{\sigma}^2  T \big(1+\sup_{t\in [0,T]}\sup_{x\in \bR}\bE[|u(t,x)|^2] \big).
\end{align}
The same inequality holds for $\widetilde{N}^{(k)}$.
%\begin{align*}
%\bE\big[\sup_{t\leq T} |\widetilde{N}_t^{(k)}|^2 \big]   \leq 8 m_2 D_{\sigma}^2  T %\big(1+\sup_{t\in [0,T]}\sup_{x\in \bR}\bE|u(t,x)|^2 \big).
%\end{align*}
Therefore, by \eqref{Z-MN}, we conclude that:
\begin{align*}
\bE\big[\sup_{t\leq T} |\widetilde{Z}_k(t)|^2 \big]   \leq \frac{32 m_2 D_{\sigma}^2  T}{\lambda_k^{\gamma}} \big(1+\sup_{t\in [0,T]}\sup_{x\in \bR}\bE[|u(t,x)|^2] \big)<\infty.
\end{align*}
It follows that on an event $\Omega^*$ of probability 1, $\sup_{t\leq T} |\widetilde{Z}_k(t)|^2<\infty$ and hence, $\widetilde{Z}_k(t) \in L^2(D)$ for all $t \in [0,T]$. On the event $\Omega^*$, we define
\[
\widetilde{Z}(t,x):=\sum_{k\geq 1} \widetilde{Z}_k(t) e_k(x) \quad \mbox{for any $t \in [0,T],x \in D$}.
\]
For any $t \in [0,T]$, consider the following event of probability 1:  
\[
\Omega_t=\bigcap_{k\geq 1}\big\{M^{(k)}(t)=\widetilde{M}^{(k)}(t) \ \mbox{and} \  N^{(k)}(t)=\widetilde{N}^{(k)}(t) \big\}.
\]
By \eqref{def-FkZ} and \eqref{def-tZ},
on the event $\Omega_t$, $\widetilde{Z}_k(t)=Z_k(t)$ for all $k\geq 1$. Using the pointwise representation \eqref{point-rep-Z} of $Z$, it follows that on the event $\Omega_t^{*} \cap \Omega_t \cap \Omega^*$, $Z(t,x)=\widetilde{Z}(t,x)$ for all $x \in D$. This means that for any $t \in [0,T]$,
\begin{equation}
\label{eq-Z-tZ}
\bP(Z(t,x)=\widetilde{Z}(t,x) \ \mbox{for all} \ x \in D)=1.
\end{equation}
By definition, $\|\widetilde{Z}(t,\cdot)\|_{H_r(D)}^2 =\sum_{k\geq 1}\lambda_k^r |\widetilde{Z}_k(t)|^2$, and hence
\begin{align}
\label{bound-tZ1}
\bE\big[\sup_{t\leq T} \|\widetilde{Z}(t,\cdot)\|_{H_r(D)}^2 \big] & \leq \sum_{k\geq 1}\lambda_k^{r}\bE \big[ \sup_{t\leq T} |\widetilde{Z}_k(t)|^2 \big] \\
\label{bound-tZ}
& \leq 32m_2 D_{\sigma}^2 T \sum_{k\geq 1}\lambda_k^{r-\gamma}\big(1+\sup_{t\in [0,T]}\sup_{x\in \bR}\bE[|u(t,x)|^2] \big)<\infty.
\end{align}
In particular, $\widetilde{Z}(t,\cdot)\in H_r(D)$ for all $t \in [0,T]$, with probability 1.
On the event $\Omega^*$, we define for any $t \in [0,T]$ and $x \in D$,
\begin{equation}
\label{def-tu}
\tu(t,x):=I(t,x)+\widetilde{Z}(t,x). 
\end{equation}
Relation
\eqref{u-tu} now follows from \eqref{eq-tx}, \eqref{def-tu} and \eqref{eq-Z-tZ}.
By \eqref{bound-I} and \eqref{bound-tZ},
\begin{align}
\label{u-IZ}
\bE\big[\sup_{t\leq T} \|\widetilde{u}(t,\cdot)\|_{H_r(D)}^2 \big] & \leq 2\left\{\bE\big[\sup_{t\leq T} \|I(t,\cdot)\|_{H_r(D)}^2 \big] +\bE\big[\sup_{t\leq T} \|\widetilde{Z}(t,\cdot)\|_{H_r(D)}^2 \big]\right\}\\
\nonumber
& \leq 2(2T^2 |D|D_b^2+32 m_2 D_{\sigma}^2 T) \sum_{k\geq 1}\lambda_k^{r-\gamma}(1+\sup_{t\in [0,T]}\sup_{x\in \bR}\bE[|u(t,x)|^2])\\
\nonumber
&=T\widehat{C}_T(1+K_T)=:\widetilde{C}_T.
\end{align}
%\textcolor{orange}{Raluca, Juan: I think that estimate \eqref{sup-Hr} should be simply $\widetilde{C}_T$, without the factor $(T+1)$, even if we get this factor in Step 2. Do you agree?}

\medskip

{\em Step 2.} For any $T>0$, let $\{\tu^{(T)}(t,x);t \in [0,T],x\in D\}$ be the process from {\em Step 1}. This means that $\bP\big(u(t,x)=\tu^{(T)}(t,x) \big)=1$ for any $t \in [0,T]$ and  $x \in D$, and
%\[
%\bP\big(u(t,x)=\tu^{(T)}(t,x) \ \mbox{for any $x \in D$}\big)=1,
%\]
\[
\bE\big[ \sup_{t\leq T} \|\tu^{(T)}(t,\cdot)\|_{H_r(D)}^2\big] \leq \widetilde{C}_T.
\]

Define
\[
\tu(t,x):=\tu^{(N)}(t,x) \quad \mbox{if $t \in [N-1,N)$ and $x\in D$}.
\]
Then \eqref{u-tu} clearly holds. To prove that \eqref{sup-Hr} holds, let $T>0$ be arbitrary. Then there exists some integer $N\geq 1$ such that $T \in [N-1,N)$. It follows that
$\sup_{t\leq T}\|\tu(t,\cdot)\|_{H_r(D)}^2 =\max\{X_1,\ldots,X_N\}$,
where 
\begin{align*}
X_k &=\sup_{t\in [k-1,k)}\|\tu(t,\cdot)\|_{H_r(D)}^2 =\sup_{t\in [k-1,k)}\|\tu^{(k)}(t,\cdot)\|_{H_r}^2\quad \mbox{for $k\leq N-1$}, \\
X_N & =\sup_{t\in [N-1,T]}\|\tu(t,\cdot)\|_{H_r(D)}^2=\sup_{t\in [N-1,T]}\|\tu^{(N)}(t,\cdot)\|_{H_r(D)}^2.
\end{align*}
Note that $\bE[X_k]\leq \widetilde{C}_k \leq \widetilde{C}_T$ for any $k\leq N-1$, and $\bE[X_N]\leq \widetilde{C}_T$. Hence,
\[
\bE\big[\sup_{t\leq T}\|\tu(t,\cdot)\|_{H_r(D)}^2\big] =\bE\big[\max\{X_1,\ldots,X_N\}\big] \leq \sum_{k=1}^N \bE[X_k] \leq N \widetilde{C}_T \leq (T+1)\widetilde{C}_T.
\]

\medskip
{\em Step 3.}  In this step, we prove that $t \mapsto \|\tu(t,\cdot)\|_{H_r(D)}$ is c\`adl\`ag on $[0,T]$. Note that
%By definition \eqref{def-tu} of $\tu$, 
\begin{align*}
\|\tu(t,\cdot)\|_{H_r(D)}^2 = \sum_{k\geq 1}\lambda_k^r |\cF_k[\tu(t,\cdot)]|^2 \quad  \mbox{and} \quad \cF_k[\tu(t,\cdot)]=I_k(t)+\widetilde{Z}_k(t).
\end{align*}
The map $t \mapsto I_k(t)$ is continuous, while $t \mapsto \widetilde{Z}_k(t)$ is  c\`adl\`ag.

Let $S_n(t)=\sum_{k=1}^n \lambda_k^r |\cF_k [\tu(t,\cdot)]|^2$.  Then $t \mapsto S_n(t)$ is c\`adl\`ag on $[0,T]$. The same argument as for \eqref{bound-I} and \eqref{bound-tZ} shows that
\begin{align*}
& \bE \big[\sup_{t\leq T}|S_n(t))-\|\tu(t,\cdot)\|_{H_r(D)}^2| \big] \\
& \quad \leq 2\big(2T^2 |D|D_b^2 +32 m_2 D_{\sigma}^2 T \big) \sum_{k\geq n+1}\lambda_k^{r-\gamma}\big(1+\sup_{t\in [0,T]}\sup_{x\in \bR}\bE[|u(t,x)|^2] \big),
\end{align*}
and the latter term tends to zero 
as $n\to \infty$. Hence, along a subsequence, $S_n(t) \to \|\tu(t,\cdot)\|_{H_r(D)}^2$ uniformly in $t \in [0,T]$, with probability 1. Since the uniform limit of c\`adl\`ag functions is c\`adl\`ag (see e.g. \cite[Thm. 43]{protter}), the map $t \mapsto \|\tu(t,\cdot)\|_{H_r(D)}^2 $ is c\`adl\`ag on $[0,T]$, with probability 1.

\end{proof}

Next, we will prove another result in which \eqref{sup-Hr} is replaced by a more powerful inequality, which does not depend on $K_T$. This new inequality (given by Theorem \ref{tu-th2} below) will be used in Section \ref{section-loc-Lip} in the case when $b$ and $\sigma$ are locally Lipschitz. To prove this inequality, we introduce two operators $\cT$ and $\widetilde{\cT}$. We explain this below.

\medskip

Let $\cL_{loc}^2$ be the set of all predictable processes $\phi=\{\phi(t,x);t\geq 0,x\in D\}$ such that
\[
\sup_{t \in [0,T]} \sup_{x\in D}\bE[|\phi(t,x)|^2]<\infty \quad \mbox{for all $T>0$}.
\]
We now define the operators $\cT$ and $\widetilde{\cT}$, and describe their relation with the solution $u$ and its modification $\tu$.

\medskip

a) {\em The operator $\cT$.} For any $\phi \in \cL_{loc}^2$, we let $\cT \phi=\cI \phi+\cZ \phi$, where
\begin{align*}
(\cI \phi)(t,x)&=\int_0^t \int_{D}G_{t-s}(x,y) b \big(\phi(s,y) \big)dyds\\
(\cZ \phi)(t,x)&=\int_0^t \int_{D}G_{t-s}(x,y) \sigma \big(\phi(s,y) \big)L(ds,dy).
\end{align*}
We consider the Fourier coefficients: for any $k\geq 1$,
\begin{align}
\label{def-Four-cI}
(\cI_k \phi)(t) &=\cF_k [(\cI \phi)(t,\cdot)]=\frac{1}{\lambda_k^{\gamma/2}} \int_0^t \int_{D}\sin \big(\lambda_k^{\gamma/2} (t-s)\big) e_k(y) b\big(\phi (s,y)\big) dyds\\
\nonumber
(\cZ _k \phi)(t) &= \cF_k [(\cZ \phi)(t,\cdot)]=\frac{1}{\lambda_k^{\gamma/2}} \big[\sin \big(\lambda_k^{\gamma/2} t \big) M_{\phi}^{(k)}(t) -\cos \big(\lambda_k^{\gamma/2} t\big) N_{\phi}^{(k)}(t) \big]
\end{align}
where 
\begin{align*}
M_{\phi}^{(k)}(t) &= \int_0^t \int_{D}\cos \big(\lambda_k^{\gamma/2} s\big) e_k(y) \sigma\big(\phi (s,y)\big) L(ds,dy)\\
N_{\phi}^{(k)}(t) & = \int_0^t \int_{D}\sin \big(\lambda_k^{\gamma/2} s \big)  e_k(y) \sigma\big(\phi (s,y)\big) L(ds,dy).
\end{align*}
Then $I=\cI u$, $Z=\cZ u$, $I_k=\cI_k u$, $Z_k=\cZ_ku$, $M^{(k)}=M^{(k)}_{u}$ and $N^{(k)}=N^{(k)}_{u}$,
 where $u$ is the solution of equation \eqref{fSWE} and $I,Z,I_k,Z_k,M^{(k)},N^{(k)}$ are defined in the proof of Theorem \ref{tu-th}.
Note that
\[
(\cT \phi)(t,x)=\sum_{k\geq 1}\big((\cI_k \phi)(t)+(\cZ_k \phi)(t)\big)e_k(x),
\] 
and hence, the Fourier coefficients of $(\cT \phi)(t,\cdot)$ are:
\[
\cF_k [(\cT \phi)(t,\cdot)]=(\cI_k \phi)(t)+(\cZ_k \phi)(t), \quad \mbox{for all} \ k\geq 1.
\]

b) {\em The operator $\widetilde{\cT}$.}
Define
\begin{equation}
\label{def-Four-cZ}
(\widetilde{\cZ _k} \phi)(t) =\frac{1}{\lambda_k^{\gamma/2}} \big[\sin \big(\lambda_k^{\gamma/2} t \big) \widetilde{M_{\phi}^{(k)}}(t) -\cos \big(\lambda_k^{\gamma/2} t\big) \widetilde{N_{\phi}^{(k)}}(t) \big],
\end{equation}
where $\widetilde{M_{\phi}^{(k)}}$ and  $\widetilde{N_{\phi}^{(k)}}$ are c\`adl\`ag modifications of the martingales $M_{\phi}^{(k)}$ and $N_{\phi}^{(k)}$. Let
\begin{equation}
\label{def-TT}
\widetilde{\cT} \phi=\cI \phi+\widetilde{\cZ} \phi
\quad \mbox{where} \quad
(\widetilde{\cZ} \phi)(t,x)=\sum_{k\geq 1} (\widetilde{\cZ _k} \phi) e_k(x).
\end{equation}
Then $\widetilde{Z}_k=\widetilde{\cZ}_k u$, where $\widetilde{Z}_k$ is given by \eqref{def-tZ}.
The Fourier coefficients of $(\widetilde{\cT} \phi)(t,\cdot)$ are:
\begin{equation}
\label{def-Fk-Tu}
\cF_k [(\widetilde{\cT} \phi)(t,\cdot)]=(\cI_k \phi)(t)+(\widetilde{\cZ_k} \phi)(t), \quad \mbox{for all} \ k\geq 1.
\end{equation}
Since $\bP \big((\cZ_k \phi)(t) = (\widetilde{\cZ_k} \phi)(t) \big)=1 $
for all $t\geq 0$ and $k\geq 1$, it follows that for any $t\geq 0$,
%\[
%\bP \big((\cZ \phi)(t,x) = (\widetilde{\cZ} \phi)(t,x) \quad \mbox{for all} \ x\in D \big)=1,
%\]
\begin{equation}
\label{T-tT}
\bP \big((\cT \phi)(t,x) = (\widetilde{\cT} \phi)(t,x) \quad \mbox{for all} \ x\in D \big)=1.
\end{equation}

c) {\em The solution $u$ and its modification $\tu$.}
Assume that $\gamma>d$, and let $u$ be a global solution of \eqref{fSWE}. This means that
\[
\bP\big(u(t,x)=(\cT u)(t,x) \big)=1 \quad \mbox{for all} \ t\geq 0,x \in D.
\]
By definition \eqref{def-tu} of $\tu$, we know that $\tu=\widetilde{\cT}u$. By \eqref{T-tT}, we have: for any $t\geq 0$,
\[
\bP \big((\cT u)(t,x) = \widetilde{u}(t,x) \quad \mbox{for all} \ x\in D \big)=1.
\]
The following result will be used in the proofs of Theorem \ref{tu-th2} and Lemma \ref{strong-uNN} below. It is a crucial statement, because it allows us to express the modification $\widetilde{u}$ as the sum of a Lebesgue integral and a stochastic integral, a.s, with the precise respective integrands.

\begin{lemma}
\label{lem-t-Tu}
Under the assumptions of Theorem \ref{tu-th},
\[
\bP\big(\tu(t,x)=(\widetilde{\cT} \widetilde{u})(t,x) \quad \mbox{for all $t\geq 0$ and $x\in D$} \big)=1.
\]
\end{lemma}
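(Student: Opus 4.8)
The plan is to show that $\widetilde{u}$ is a fixed point of $\widetilde{\cT}$, exploiting the fact that $u$ is a fixed point of $\cT$ modulo null sets and that $\widetilde{u}$ is a modification of $u$. The key observation is that both $\cI$ and $\cZ$ (hence $\widetilde{\cZ}$) depend on their argument $\phi$ only through the values $\{\phi(s,y): s\le t, y\in D\}$ inside a Lebesgue/stochastic integral, so replacing $\phi$ by a modification does not change these integrals a.s. First I would record the elementary but essential fact that if $\phi,\psi\in\cL^2_{loc}$ satisfy $\bP(\phi(s,y)=\psi(s,y))=1$ for every $(s,y)$, then for each fixed $(t,x)$ one has $\bP\big((\cI\phi)(t,x)=(\cI\psi)(t,x)\big)=1$ and $\bP\big((\cZ\phi)(t,x)=(\cZ\psi)(t,x)\big)=1$; the first is Fubini (the set $\{(s,y):\phi(s,y)\ne\psi(s,y)\}$ is $dyds\otimes\bP$-null), and the second follows because $b$ and $\sigma$ are (locally Lipschitz, hence) Borel, so $b(\phi)=b(\psi)$ and $\sigma(\phi)=\sigma(\psi)$ a.e., and the It\^o isometry gives $\bE\big[|(\cZ\phi)(t,x)-(\cZ\psi)(t,x)|^2\big]=m_2\,\bE\int_0^t\int_D G_{t-s}^2(x,y)\,|\sigma(\phi(s,y))-\sigma(\psi(s,y))|^2\,dyds=0$. (Here one uses that both $\phi$ and $\psi$ lie in $\cL^2_{loc}$ together with the linear growth of $\sigma$ to justify that all integrals are finite; since the difference integrand is $0$ a.e., the bound is $0$ regardless.) The same reasoning at the level of Fourier coefficients gives $\bP\big((\cI_k\phi)(t)=(\cI_k\psi)(t)\big)=1$, $\bP\big(M^{(k)}_\phi(t)=M^{(k)}_\psi(t)\big)=1$, $\bP\big(N^{(k)}_\phi(t)=N^{(k)}_\psi(t)\big)=1$ for all $t\ge0$, $k\ge1$.

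Next I apply this with $\phi=u$ and $\psi=\widetilde{u}$, which is legitimate since $\widetilde{u}\in\cL^2_{loc}$ by Theorem \ref{tu-th} (indeed $\bE[\|\widetilde u(t,\cdot)\|^2_{H_r(D)}]<\infty$ and $H_r(D)\hookrightarrow L^2(D)$, and more directly $\widetilde u=u$ a.s. pointwise so $\sup_{t\le T}\sup_x\bE[|\widetilde u(t,x)|^2]=K_T<\infty$). We obtain: for every $t\ge0$ and $k\ge1$, $M^{(k)}_{\widetilde u}(t)=M^{(k)}_u(t)=M^{(k)}(t)$ a.s. and similarly for $N$. Since $\widetilde{M^{(k)}_{\widetilde u}}$ and $\widetilde{M^{(k)}}=\widetilde{M^{(k)}_u}$ are both c\`adl\`ag modifications of the same martingale (they agree a.s. at each fixed $t$), two c\`adl\`ag processes that agree a.s. at every fixed time are indistinguishable; hence $\bP\big(\widetilde{M^{(k)}_{\widetilde u}}(t)=\widetilde{M^{(k)}}(t)\text{ for all }t\ge0\big)=1$, and likewise for $N$. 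Combining with $\bP\big((\cI_k\widetilde u)(t)=(\cI_k u)(t)=I_k(t)\text{ for all }t\ge0\big)=1$ (the map $t\mapsto I_k(t)$ and $t\mapsto(\cI_k\widetilde u)(t)$ are both continuous and agree a.s. at each $t$, again indistinguishable), and using definitions \eqref{def-Four-cZ}--\eqref{def-Fk-Tu}, we get that for each $k\ge1$, $\bP\big(\cF_k[(\widetilde{\cT}\widetilde u)(t,\cdot)]=\cF_k[\widetilde{\cT}u](t)=\cF_k[\widetilde u(t,\cdot)]\text{ for all }t\ge0\big)=1$, where the last equality is just $\widetilde u=\widetilde{\cT}u$ from \eqref{def-tu}.

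Finally I take the intersection over $k\ge1$ of these probability-one events, obtaining an event of probability $1$ on which $\cF_k[(\widetilde{\cT}\widetilde u)(t,\cdot)]=\cF_k[\widetilde u(t,\cdot)]$ for all $t\ge0$ and all $k\ge1$ simultaneously. On this event, for every $t\ge0$, the two functions $(\widetilde{\cT}\widetilde u)(t,\cdot)$ and $\widetilde u(t,\cdot)$ of $L^2(D)$ have identical Fourier coefficients with respect to the orthonormal basis $(e_k)$, hence coincide in $L^2(D)$; since both are given by the same absolutely convergent series $\sum_k(\cdots)e_k(x)$ (convergence in $H_r(D)\subset L^2(D)$), they agree as functions, i.e. $(\widetilde{\cT}\widetilde u)(t,x)=\widetilde u(t,x)$ for all $x\in D$. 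This yields the claim. The main obstacle, and the only genuinely delicate point, is the passage from ``agreeing a.s. at each fixed time'' to ``indistinguishable'': this is where c\`adl\`ag (for the $M^{(k)},N^{(k)}$ pieces) and continuity (for the $I_k$ pieces) of the processes involved is indispensable, and it is also why one must work coordinate-by-coordinate in the Fourier basis before taking a countable intersection, rather than trying to handle the full random field $(t,x)\mapsto\widetilde u(t,x)$ at once.
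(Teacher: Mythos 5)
Your proof is correct, but it takes a genuinely different route from the paper's. The paper argues quantitatively: it writes $\tu=\widetilde{\cT}u$ and bounds $\bE\big[\sup_{t\leq T}\|\tu(t,\cdot)-(\widetilde{\cT}\tu)(t,\cdot)\|_{H_r(D)}^2\big]$ by $\sum_k\lambda_k^r$ times the Fourier-coefficient discrepancies, estimated exactly as in \eqref{bound-I} and \eqref{bound-tZ} via the global Lipschitz property of $b,\sigma$, Doob's maximal inequality and the summability of $\lambda_k^{r-\gamma}$; since $\sup_{t,x}\bE[|u(t,x)-\tu(t,x)|^2]=0$ the bound vanishes, and the embedding $H_r(D)\hookrightarrow L^{\infty}(D)$ plus an intersection over $T\in\bQ_+$ gives the pointwise statement. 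You instead argue qualitatively, coordinate by coordinate: for each fixed $t$ and $k$ the quantities $(\cI_k\phi)(t)$, $M^{(k)}_{\phi}(t)$, $N^{(k)}_{\phi}(t)$ are unchanged a.s.\ when $\phi$ is replaced by a modification (Fubini for the Lebesgue part, the It\^o isometry for the martingale part), and you then upgrade ``modification'' to ``indistinguishable'' using continuity of $t\mapsto\cI_k\phi(t)$ and the standard fact that two c\`adl\`ag processes agreeing a.s.\ at every fixed time are indistinguishable, finishing with a countable intersection over $k$. Your route needs no Lipschitz hypothesis on $b$ and $\sigma$ (measurability and linear growth suffice), no Doob inequality, and not even the convergence of $\sum_k\lambda_k^{r-\gamma}$ or the $L^\infty$ embedding, since identical Fourier series define identical functions; the paper's route is heavier but reuses machinery already set up and delivers the conclusion directly in the $\sup_t H_r$-norm form that is consumed downstream (e.g.\ in Theorem \ref{tu-th2} and Lemma \ref{strong-uNN}). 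The one point you share with the paper and gloss over equally is that $\tu$ must be an admissible integrand for $\cZ$ (predictable up to a $d\bP\,ds\,dy$-null set); your observation that $\tu\in\cL^2_{loc}$ because it is a pointwise modification of $u$ is the right justification for the moment bounds, and the measurability issue is resolved the same way in both arguments.
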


\begin{proof}
Using \eqref{def-Fk-Tu} for $\phi=u$ and $\phi=\tu$, we have:
\begin{align*}
&\bE \big[ \sup_{t\leq T} \| \tu(t,\cdot)-(\widetilde{\cT} \widetilde{u})(t,\cdot) \|_{H_r(D)}^2 \big]
\leq \sum_{k\geq 1}\lambda_k^r \bE \big[ \sup_{t\leq T} | \cF_k[(\widetilde{\cT} u)(t,\cdot)]-\cF_k[(\widetilde{\cT} \widetilde{u})(t,\cdot)] |^2 \big]\\
& \quad \leq 2 \left\{ \sum_{k\geq 1}\lambda_k^r \bE \big[ \sup_{t\leq T} | (\cI_k u)(t) -(\widetilde{\cI}_k u)(t) |^2 \big]+ \sum_{k\geq 1}\lambda_k^r \bE \big[ \sup_{t\leq T} | (\widetilde{\cZ}_k u)(t) -(\widetilde{\cZ}_k \tu)(t)
|^2 \big] \right\}\\
& \quad \leq 2\big(T|D| L_b^2+16 m_2 L_{\sigma}^2\big)
\sum_{k\geq 1}\lambda_k^{r-\gamma} \sup_{t \in [0,T]}\sup_{x\in \bR}\bE[|u(t,x)-\tu(t,x)|^2]=0,
\end{align*}
where the last inequality is proved similarly to \eqref{bound-I} and \eqref{bound-tZ}, using the Lipschitz property of $b$ and $\sigma$. Hence, by Theorem \ref{embed-th}, we obtain that, for any $T>0$,
\[
\bP\big(\tu(t,x)=(\widetilde{\cT} \widetilde{u})(t,x) \quad \mbox{for all $t\in[0,T]$ and $x\in D$} \big)=1.
\]
Finally, we take the intersection for all $T \in \bQ_{+}$ of these events of probability 1.
\end{proof}

\medskip

\begin{theorem}
\label{tu-th2}
Under the assumptions of Theorem \ref{tu-th}, for any $T>0$,
\[
\bE\big[\sup_{t \in [0,T]}\|\tu(t,\cdot)\|_{H_r(D)}^2 \big]\leq T \widehat{C}_{T} \exp\big(T \widehat{C}_T  \cC_{\infty}^2 \big),
\]
where $\widehat{C}_T$ is given by \eqref{def-hatC} and $\cC_{\infty}$ is the embedding constant from Theorem \ref{embed-th}.
\end{theorem}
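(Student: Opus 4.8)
The plan is to combine Lemma~\ref{lem-t-Tu}, which identifies $\tu$ with $\widetilde{\cT}\tu$ almost surely, with the Sobolev embedding $H_r(D)\hookrightarrow L^{\infty}(D)$ from Theorem~\ref{embed-th} (valid since $r>\frac d2$), and then to close a Gr\"onwall inequality for $g(T):=\bE\big[\sup_{t\le T}\|\tu(t,\cdot)\|_{H_r(D)}^2\big]$, which is already known to be finite by \eqref{sup-Hr}.

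By Lemma~\ref{lem-t-Tu} and \eqref{def-Fk-Tu} applied to $\phi=\tu$, we have $\cF_k[\tu(t,\cdot)]=(\cI_k\tu)(t)+(\widetilde{\cZ_k}\tu)(t)$ a.s.\ for all $k$, so that (using that $\sup_t$ and $\bE$ commute with a sum of nonnegative terms)
\[
g(T)\le 2\sum_{k\ge1}\lambda_k^r\,\bE\big[\sup_{t\le T}|(\cI_k\tu)(t)|^2\big]+2\sum_{k\ge1}\lambda_k^r\,\bE\big[\sup_{t\le T}|(\widetilde{\cZ_k}\tu)(t)|^2\big].
\]
Each term is then estimated exactly as in the proof of Theorem~\ref{tu-th}, but \emph{without} passing to the moment bound $K_T$: instead, in the spatial integral $\int_D|e_k(y)|^2|\tu(s,y)|^2\,dy$ one uses $\int_D|e_k|^2=1$ together with $\|\tu(s,\cdot)\|_{L^{\infty}(D)}\le\cC_{\infty}\|\tu(s,\cdot)\|_{H_r(D)}$, which is the only place the hypothesis $r>\frac d2$ enters. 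This yields, from \eqref{def-Four-cI}, Cauchy--Schwarz and the linear growth of $b$,
\[
\sup_{t\le T}|(\cI_k\tu)(t)|^2\le\frac{2T|D|D_b^2}{\lambda_k^{\gamma}}\int_0^T\big(1+\cC_{\infty}^2\|\tu(s,\cdot)\|_{H_r(D)}^2\big)\,ds ,
\]
and, from the $L^2$-isometry of $\int\cdot\,dL$, Doob's maximal inequality for the c\`adl\`ag martingales $\widetilde{M_{\tu}^{(k)}},\widetilde{N_{\tu}^{(k)}}$ (as in \eqref{Z-MN}--\eqref{bound-tM}) and the linear growth of $\sigma$,
\[
\bE\big[\sup_{t\le T}|(\widetilde{\cZ_k}\tu)(t)|^2\big]\le\frac{32\,m_2 D_{\sigma}^2}{\lambda_k^{\gamma}}\int_0^T\bE\big[1+\cC_{\infty}^2\|\tu(s,\cdot)\|_{H_r(D)}^2\big]\,ds .
\]
Summing over $k$ (the series $\sum_k\lambda_k^{r-\gamma}$ converges by \eqref{sum-lam} because $r<\gamma-\frac d2$), the combined constant is precisely $\widehat{C}_T$ from \eqref{def-hatC}, and since $\bE[\|\tu(s,\cdot)\|_{H_r(D)}^2]\le g(s)$ we arrive at
\[
g(T)\le\widehat{C}_T\Big(T+\cC_{\infty}^2\int_0^T g(s)\,ds\Big) .
\]

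Because $\widehat{C}_T$ is nondecreasing in $T$, this same inequality holds with $T$ replaced by any $s\in[0,T]$ and the constants frozen at $T\widehat{C}_T$ and $\widehat{C}_T\cC_{\infty}^2$; as $g$ is nondecreasing and finite on $[0,T]$ by \eqref{sup-Hr}, Gr\"onwall's lemma then gives $g(T)\le T\widehat{C}_T\exp\big(T\widehat{C}_T\cC_{\infty}^2\big)$, which is the assertion. I expect the only delicate point to be the bookkeeping in the middle step: one must insert the pathwise $L^{\infty}(D)$-bound \emph{before} taking expectations, so that the resulting inequality becomes self-referential in $g$ and Gr\"onwall applies; the remaining estimates are line-by-line identical to those already carried out for Theorem~\ref{tu-th}.
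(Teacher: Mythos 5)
Your proposal is correct and follows essentially the same route as the paper's proof: identify $\tu$ with $\widetilde{\cT}\tu$ via Lemma \ref{lem-t-Tu}, estimate the Fourier coefficients of the drift and stochastic terms with Cauchy--Schwarz, Doob's inequality and the linear growth of $b,\sigma$, insert the embedding bound $\sup_{y}|\tu(s,y)|^2\le\cC_{\infty}^2\|\tu(s,\cdot)\|_{H_r(D)}^2$ before taking expectations, sum over $k$ to recover $\widehat{C}_T$, and close with Gr\"onwall. Your explicit remark that the a priori finiteness of $g$ from \eqref{sup-Hr} is what legitimizes the Gr\"onwall step is a welcome (implicit in the paper) point of care, but it does not change the argument.
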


\begin{proof} On the event of probability 1 given by Lemma \ref{lem-t-Tu}, $\widetilde{u}=\widetilde{\cT} \widetilde{u}=\cI \widetilde{u}+\widetilde{\cZ} \widetilde{u}$. Hence,
\begin{equation}
\label{sup-tu}
\bE\big[\sup_{t\leq T} \|\tu(t,\cdot)\|_{H_r(D)}^2 \big] \leq 2 \left\{ \bE\big[\sup_{t\leq T} \|(\cI \tu)(t,\cdot)\|_{H_r(D)}^2 \big] +  \bE\big[\sup_{t\leq T} \|(\widetilde{\cZ} \tu)(t,\cdot)\|_{H_r(D)}^2 \big]\right\}.
\end{equation}
We examine separately the two terms. 

We treat $\cI \tu$ first. Recall that $(\cI \tu)(t,\cdot)$ has Fourier coefficients $\{(\cI_k \tu)(t)\}_{k\geq 1}$ given by \eqref{def-Four-cI} with $\phi=\tu$.
%\[
%(\cI_k \tu)(t)=\frac{1}{\lambda_k^{\gamma/2}} \int_0^t \int_D \sin\big(\lambda_k^{\gamma/2}(t-s) %\big) e_k(y) b\big(\tu(s,y) \big)dyds.
%\]
Similarly to \eqref{bound-FI},
\begin{align*}
\sup_{t\leq T }\big|(\cI_k \tu)(t)\big|^2 & \leq    \frac{2T|D| D_b^2}{\lambda_k^{\gamma}} \int_0^T \int_{D} |e_k(y)|^2 \big(1+|\tu(s,y|^2\big) dyds\\
& \leq \frac{2T|D| D_b^2}{\lambda_k^{\gamma}} \int_0^T  \big(1+\sup_{y\in D}|\tu(s,y|^2\big) ds,
\end{align*}
where for the second line we used the fact that $\int_{D}|e_k(y)|^2 dy=1$. By Theorem \ref{embed-th},
\begin{equation}
\label{u-tu-ineq}
\sup_{y\in D}|\tu(s,y)|^2 \leq \cC_{\infty}^2 \|\tu(s,\cdot)\|_{H_r(D)}^2.
\end{equation}
Hence,
\begin{align*}
\bE \big[\sup_{t\leq T} \big|(\cI_k \tu)(t)\big|^2 \big]\leq \frac{2T|D|D_b^2}{\lambda_k^{\gamma}}\int_0^T \big(1+\cC_{\infty}^2 \bE[\|\tu(s,\cdot)\|_{H_r(D)}^2] \big)ds,
\end{align*}
and
\begin{align}
\label{bound-I2}
\bE\big[\sup_{t\leq T}\|(\cI \tu) (t,\cdot)\|_{H_r(D)}^2 \big]\leq 2T |D| D_b^2 \sum_{k\geq 1}\lambda_k^{r-\gamma}\int_0^T \big(1+\cC_{\infty}^2 \bE[\|\tu(s,\cdot)\|_{H_r(D)}^2] \big)ds.
\end{align}

Next, we examine $\widetilde{\cZ}\tu$.  Recall that $(\widetilde{\cZ} \tu)(t,\cdot)$ has Fourier coefficients $\{(\cZ_k \tu)(t)\}_{k\geq 1}$ given by \eqref{def-Four-cZ} with $\phi=\tu$.
Similarly to \eqref{bound-tM},
\begin{align*}
\bE\big[\sup_{t\leq T} |\widetilde{M}_{\tu}^{(k)}(t)|^2 \big]  
& \leq 8 m_2 D_{\sigma}^2  \bE \left[\int_0^T \int_{D} |e_k(y)|^2 \big(1+|\tu(s,y)|^2 \big) dyds\right]\\
&\leq 8 m_2 D_{\sigma}^2  \bE\left[ \int_0^T   \big(1+\sup_{y\in D}|\tu(s,y)|^2 \big) ds\right],
\end{align*}
where for the second line we used the fact that $\int_{D}|e_k(y)|^2dy=1$.
Using again the embedding inequality \eqref{u-tu-ineq}, it follows that
\begin{align*}
\bE\big[\sup_{t\leq T} |\widetilde{M}_{\tu}^{(k)}(t)|^2 \big]  
& \leq 8 m_2 D_{\sigma}^2 \int_0^T \big(1+\cC_{\infty}^2\bE[\|\tu(s,\cdot)\|_{H_r(D)}^2] \big) ds.
\end{align*}
The same inequality holds also for $\widetilde{N}_{\tu}^{(k)}$.
Therefore, similarly to \eqref{Z-MN}, we conclude that:
\begin{align*}
\bE\big[\sup_{t\leq T} |(\widetilde{\cZ}_k\tu)(t)|^2 \big]  \leq \frac{32 m_2 D_{\sigma}^2}{\lambda_k^{\gamma}} \int_0^T \big(1+\cC_{\infty}^2\bE[\|\tu(s,\cdot)\|_{H_r(D)}^2]\big) ds.
\end{align*}
Similarly to \eqref{bound-tZ1}, we infer that
\begin{align}
\label{bound-Z2}
\bE\big[\sup_{t\leq T} \|(\widetilde{\cZ} \tu)(t,\cdot)\|_{H_r(D)}^2 \big] 
& \leq 32m_2 D_{\sigma}^2  \sum_{k\geq 1}\lambda_k^{r-\gamma}  \int_0^T \big(1+\cC_{\infty}^2\bE[\|\tu(s,\cdot)\|_{H_r(D)}^2] \big) ds.
\end{align}
Using \eqref{sup-tu}, \eqref{bound-I2} and \eqref{bound-Z2}, it follows that:
\begin{align*}
\bE\big[\sup_{t\leq T} \|\widetilde{u}(t,\cdot)\|_{H_r(D)}^2 \big] 
\leq  \widehat{C}_T\int_0^T \big(1+\cC_{\infty}^2\bE[\|\tu(s,\cdot)\|_{H_r(D)}^2 ]\big) ds,
\end{align*}
where $\widehat{C}_T$ is given by \eqref{def-hatC}. It follows that for any $T>0$,
\[
\bE\big[\sup_{t\leq T} \|\tu(t,\cdot)\|_{H_r(D)}^2 \big] 
\leq  \widehat{C}_T T+\widehat{C}_T \cC_{\infty}^2 \int_0^T \bE\big[\sup_{\rho \leq s}\|\tu(\rho,\cdot)\|_{H_r(D)}^2\big]  ds.
\]
The conclusion follows by Gronwall's lemma.
\end{proof}

\subsection{Proof of Theorem \ref{main}}
\label{section-loc-Lip}

In this subsection, we give the proof of Theorem \ref{main}. We fix $r$ such that $\frac{d}{2}<r<\gamma-\frac{d}{2}$, and we let $\cC_{\infty}=\cC_{\infty}(r)$ be the embedding constant  of $H_r(D)$ into $L^{\infty}(D)$; see Theorem \ref{embed-th}.

%To prove this theorem, we use a similar truncation technique as in \cite{FN21}.
For any $N\geq 1$, we consider the truncated functions:
\[
b_{N}(\xi)=\left\{
\begin{array}{ll} 
b(\xi) & \mbox{if $|\xi| \leq \cC_{\infty} N$} \\
b(\cC_{\infty}N) & \mbox{if $\xi>\cC_{\infty} N$} \\
b(-\cC_{\infty}N) & \mbox{if $\xi<-\cC_{\infty} N$}
\end{array} \right.
\quad \mbox{and} \quad 
\sigma_{N}(\xi)=\left\{
\begin{array}{ll} 
\sigma(\xi) & \mbox{if $|\xi| \leq \cC_{\infty} N$} \\
\sigma(\cC_{\infty}N) & \mbox{if $\xi>\cC_{\infty} N$} \\
\sigma(-\cC_{\infty}N) & \mbox{if $\xi<-\cC_{\infty} N$}
\end{array} \right.
\]
%where $\cC_{\infty}$ is the embedding constant from Theorem \ref{embed-th}.
Note that $b_N$ and $\sigma_N$ are globally Lipschitz, with Lipschitz constants:
\[
L_{N}^{(b)}:=L_{b,\cC_{\infty}N} \quad \mbox{and} \quad L_{N}^{(\sigma)}:=L_{\sigma,\cC_{\infty}N},
\]
the respective Lipschitz constants of $b$ and $\sigma$ on the interval $[-\cC_{\infty} N,\cC_{\infty} N]$. The functions $b_N$ and $\sigma_{N}$ also have linear growth, with the {\em same} constants $D_b$ and $D_{\sigma}$ as $b$ and $\sigma$: for any $\xi \in \bR$,
\[
|b_N(\xi)|\leq D_b(1+|\xi|) \quad \mbox{and} \quad |\sigma_N(\xi)|\leq D_{\sigma}(1+|\xi|).
\]

We consider equation \eqref{fSWE} with $(b,\sigma)$ replaced by $(b_N,\sigma_N)$:  
\begin{equation}
\label{fSWE-N}
	\begin{cases}
		\dfrac{\partial^2 u}{\partial t^2} (t,x)
		=  -(-\Delta)^{\gamma}u(t,x) + b_N\big(u(t,x)\big) + \sigma_N\big(u(t,x)\big) \dot{L}(t,x), \
		t>0, \ x \in D, \\
		u(0,x) = 0, \quad \dfrac{\partial u}{\partial t}(0,x)=0, \quad x \in D \\
u(t,x)=0, \quad  t > 0, \ x \in \partial D
	\end{cases}
\end{equation}
We denote by $u_N$ the global solution of equation \eqref{fSWE-N}:
for any $t\geq 0$ and $x \in D$, 
\begin{equation}
\label{def-uN}
u_N(t,x)=\int_0^t \int_{D}G_{t-s}(x,y)b_N\big(u_N(s,y)\big)dyds+ \int_0^t \int_{D}
G_{t-s}(x,y)\sigma_N\big(u_N(s,y)\big)L(ds,dy) \quad \mbox{a.s.}
\end{equation}

%Let $r$ be such that
%\[
%\frac{d}{2}<r<\gamma-\frac{d}{2}.
%\]
Let $\tu_N$ be the process given by Theorem \ref{tu-th} (a modification of $u_N$). Then, by Theorems \ref{tu-th} and \ref{tu-th2},
\begin{description}
\item[a)] $\mathbb{P}\big(u_N(t,x)=\tu_N(t,x) \ \mbox{for all} \ x \in D\big)=1$ for any $t\geq 0$; in particular, $\tu_N$ is a modification of $u_N$ and satisfies \eqref{def-uN};
\item[b)] 
$\bE \big[ \sup_{t\leq T} \|\tu_N(t,\cdot) \|_{H_r(D)}^2 \big]
 \leq (T+1)T \widehat{C}_T (1+K_{T,N})$ where $\widehat{C}_T$ is given by \eqref{def-hatC} and
    \[
    K_{T,N}:=\sup_{t\in [0,T]} \sup_{x \in D}\bE[|u_{N}(t,x)|^2];
    \]
In particular, $\bP\big(\tu_N(t,\cdot) \in H_r(D) \ \mbox{for all $t\geq 0$}\big)=1$.
 
\item[c)] the map $t \mapsto \|\tu_N(t,\cdot)\|_{H_r(D)}$ is c\`adl\`ag on $\bR_{+}$ with probability 1;
\item[d)] $\bE\big[ \sup_{t\leq T} \|\tu_N(t,\cdot) \|_{H_r(D)}^2 \big]\leq T \widehat{C}_T \exp(T \widehat{C}_T \cC_{\infty}^2)$ for any $T>0$, where 
    $\widehat{C}_T$ is given by \eqref{def-hatC}.
\end{description}

Define
\[
\tau_N=\inf\left\{t>0; \|\tu_N(t,\cdot)\|_{H_r(D)} > N \right\} \quad (\inf \emptyset=\infty).
\]
By Lemma \ref{stop}, $\tau_N$ is a stopping time with respect to the filtration $(\cF_t)_{t\geq 0}$.
Note that
\[
 \|\tu_N(t,\cdot)\|_{H_r(D)} \leq N \quad \mbox{if} \quad t< \tau_N.
\]
Since $r>d/2$, by Theorem \ref{embed-th}, it follows that if $t<\tau_N$, then for any $x \in D$,
\begin{equation}
\label{bound-tu}
|\tu_N(t,x)| \leq \|\tu_N(t,\cdot)\|_{L^{\infty}(D)} \leq \cC_{\infty}\|\tu_N(t,\cdot)\|_{H_r(D)} \leq \cC_{\infty}N,
\end{equation}
Then, using the definitions of $b_N$ and $\sigma_N$, we infer that:
\begin{equation}
\label{bN-b}
b_{N}\big(\tu_{N}(t,x)\big)=b\big(\tu_{N}(t,x)\big) \ \mbox{for all $x \in D$}, \quad \mbox{if} \ t<\tau_N,
\end{equation}
\begin{equation} 
\label{sN-s}
\sigma_{N}\big(\tu_{N}(t,x)\big)=\sigma\big(\tu_{N}(t,x)\big) \ \mbox{for all $x \in D$}, \quad \mbox{if} \ t<\tau_N.
\end{equation}

\medskip

In addition to the local property for the stochastic integral with respect to $L$, given by Lemma \ref{local-cor}, we will use the following (obvious) local property of the Lebesgue integral: 
\begin{equation}
\label{local-Leb}
1_{\{t<\tau\}} \int_0^t \int_{D} X(s,y)dyds=1_{\{t<\tau \}} \int_0^t \int_{D} X(s,y)1_{\{s<\tau \}}dyds,
\end{equation}
for any function $\tau:\Omega \to \bR_{+}$ such that $s \mapsto 1_{\{s<\tau(\omega)\}}$ is measurable on $\bR_{+}$, for any $\omega \in \Omega$.

\medskip

The following result deals with the consistency in the definition of $\tu_{N}$. Though later on we will indeed prove a stronger statement, we include its proof because some parts of it will be used in other proofs in the sequel.

\begin{lemma}
\label{lem-uN-u}
If $\gamma > d$ and Assumption \ref{assumpt} holds, then for any $N \geq 1$, $t>0$ and $x \in D$,
\begin{equation}
\label{uN-u}
\tu_N(t,x)=\tu_{N+1}(t,x) \quad \mbox{a.s. on $\{t<\tau_N\}$}.
\end{equation}
\end{lemma}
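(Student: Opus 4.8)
The plan is to compare $\tu_N$ and $\tu_{N+1}$ on the stochastic interval $[0,\tau_N)$ by showing they both satisfy the \emph{same} mild equation there, and then invoke uniqueness. First I would apply Lemma \ref{lem-t-Tu} to both $u_N$ and $u_{N+1}$: on an event of probability $1$, $\tu_N=\widetilde{\cT}_N\tu_N$ and $\tu_{N+1}=\widetilde{\cT}_{N+1}\tu_{N+1}$, where $\widetilde{\cT}_N$ is the operator built from the truncated coefficients $(b_N,\sigma_N)$. This is the crucial step the paper flags (the statement right before Lemma \ref{lem-t-Tu}): it lets us write each modification, simultaneously for all $(t,x)$, as a Lebesgue integral plus an It\^o integral with the explicit integrands $b_N(\tu_N)$ and $\sigma_N(\tu_N)$.

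Next I would use the key truncation identities \eqref{bN-b} and \eqref{sN-s}: on $\{t<\tau_N\}$ the embedding bound \eqref{bound-tu} forces $|\tu_N(t,x)|\le \cC_\infty N\le \cC_\infty(N+1)$, so $b_N(\tu_N(t,x))=b(\tu_N(t,x))=b_{N+1}(\tu_N(t,x))$, and likewise for $\sigma$. Combining this with the localization properties — \eqref{local-Leb} for the Lebesgue integral and Lemma \ref{local-cor} for the stochastic integral — I get that, multiplied by $1_{\{t<\tau_N\}}$, the process $\tu_N$ satisfies exactly the mild equation associated with the coefficients $(b_{N+1},\sigma_{N+1})$ stopped at $\tau_N$. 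The point is that stopping the integrands at time $\tau_N$ does not change the integral once we multiply by $1_{\{t<\tau_N\}}$, and on $[0,\tau_N)$ the $N$-truncated and $(N+1)$-truncated integrands coincide.

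Then I would set $v(t,x):=\tu_N(t,x)1_{\{t<\tau_N\}}$ and $w(t,x):=\tu_{N+1}(t,x)1_{\{t<\tau_N\}}$ and estimate $\bE[\,|v(t,x)-w(t,x)|^2\,]$, or rather the Sobolev-norm version $\bE[\sup_{t\le T}\|(v-w)(t,\cdot)\|_{H_r(D)}^2]$, exactly as in the estimates \eqref{bound-I}, \eqref{bound-tZ} and in Lemma \ref{lem-t-Tu}: the difference of the two mild formulations involves $b_{N+1}(v)-b_{N+1}(w)$ and $\sigma_{N+1}(v)-\sigma_{N+1}(w)$, which by the global Lipschitz property of $b_{N+1},\sigma_{N+1}$ are bounded by $L_{N+1}^{(b)}|v-w|$ and $L_{N+1}^{(\sigma)}|v-w|$. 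Since on $\{s<\tau_N\}$ we also have $s<\tau_N$, the localization \eqref{local-Leb} again lets us insert $1_{\{s<\tau_N\}}$ inside the integrals, so the difference is controlled by $\int_0^t \bE[\sup_{\rho\le s}\|(v-w)(\rho,\cdot)\|_{H_r(D)}^2]\,ds$ times a finite constant depending on $N$, $T$, $D_b$, $D_\sigma$ and $\sum_k \lambda_k^{r-\gamma}<\infty$ (finite by \eqref{sum-lam} since $r<\gamma-d/2$); Gronwall's lemma then gives $\bE[\sup_{t\le T}\|(v-w)(t,\cdot)\|_{H_r(D)}^2]=0$ for every $T>0$. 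Finally, by the embedding Theorem \ref{embed-th} this forces $v(t,x)=w(t,x)$ a.s.\ for each fixed $(t,x)$, i.e.\ $\tu_N(t,x)=\tu_{N+1}(t,x)$ a.s.\ on $\{t<\tau_N\}$, which is \eqref{uN-u}.

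The main obstacle is the bookkeeping in the first two steps: making precise that multiplication by $1_{\{t<\tau_N\}}$ commutes with both integrals and that the truncated integrands can legitimately be swapped for $b,\sigma$ and then for $b_{N+1},\sigma_{N+1}$ on the set $\{s<\tau_N\}$ \emph{inside} the It\^o integral — this requires the predictability of $1_{\{s<\tau_N\}}$ (so that the stopped integrand is an admissible integrand) and the local property of Lemma \ref{local-cor}. Once the two stopped mild equations are shown to coincide, the Gronwall/uniqueness argument is entirely routine and parallels the estimates already carried out in the proof of Theorem \ref{tu-th} and Lemma \ref{lem-t-Tu}.
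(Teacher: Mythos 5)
Your proof is correct, but it takes a genuinely different route from the paper's proof of this particular lemma. The paper fixes $(t,x)$, subtracts the two mild equations, multiplies by $1_{[0,\tau_N)}(t)$, inserts $1_{\{s<\tau_N\}}$ via \eqref{local-Leb} and Lemma \ref{local-cor}, swaps $b_N,\sigma_N$ for $b_{N+1},\sigma_{N+1}$ via \eqref{bNN}--\eqref{sNN}, and then runs a purely pointwise second-moment estimate: with $H(t)=\sup_{x\in D}\bE\big[1_{[0,\tau_N)}(t)(u_N(t,x)-u_{N+1}(t,x))^2\big]$ and $J(t)=\sup_{x\in D}\int_D G_t^2(x,y)\,dy$, it derives $H(t)\le C\int_0^t H(s)J(t-s)\,ds$ and concludes $H\equiv 0$ by the convolution-type Gronwall lemma (Lemma 15 of \cite{dalang99}), using only the integrability \eqref{square-G} of $J$ — no Fourier decomposition, no Doob inequality, no $\sup_{t\le T}$ inside the expectation. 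Your argument instead works at the level of $\bE\big[\sup_{t\le T}\|\cdot\|_{H_r(D)}^2\big]$ via Lemma \ref{lem-t-Tu}, the Fourier-coefficient decomposition and Doob's maximal inequality; this is essentially the paper's proof of the \emph{stronger} Lemma \ref{strong-uNN} (equality for all $t\in[0,\tau_N)$ and $x\in D$ simultaneously, on a single null set), of which the present lemma is an immediate corollary — and the paper itself remarks that the stronger statement comes later. What the paper's route buys is economy (an elementary estimate reusable in later proofs and requiring only the kernel bound \eqref{square-G}); what your route buys is the uniform-in-$(t,x)$ conclusion in one pass. One minor point of bookkeeping: after inserting the indicator, the integrand difference is $\big(b_{N+1}(\tu_N(s,y))-b_{N+1}(\tu_{N+1}(s,y))\big)1_{\{s<\tau_N\}}$ rather than literally $b_{N+1}(v)-b_{N+1}(w)$ with $v=\tu_N 1_{\{\cdot<\tau_N\}}$ (these differ unless $b_{N+1}(0)=0$), but the Lipschitz bound $L_{N+1}^{(b)}|v-w|$ is the same either way, so your estimate goes through.
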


\begin{proof} In view of Lemma \ref{fact1-lem}, we have to prove that: for any $N \geq 1$, $t>0$ and $x \in D$, 
\begin{equation}
\label{uN-u1}
1_{[0,\tau_N)}(t) \big(\tu_N(t,x)-\tu_{N+1}(t,x)\big)  =0 \quad \mbox{a.s.}
\end{equation}
We fix $(t,x)$. We write \eqref{def-uN} for $\tu_N(t,x)$ and for $\tu_{N+1}(t,x)$, we take the difference of these two equations, and then we multiply by $1_{[0,\tau_N)}(t)$. We obtain:
%With probability 1,
\begin{align*}
& 1_{[0,\tau_N)}(t) \big(\tu_N(t,x)-\tu_{N+1}(t,x)\big)\\
& \quad =1_{[0,\tau_N)}(t) \int_0^t \int_{D} G_{t-s}(x,y) \big(b_N(\tu_N(s,y))-b_{N+1}(\tu_{N+1}(s,y))\big)dyds\\
& \quad \quad + 1_{[0,\tau_N)}(t) \int_0^t \int_{D} G_{t-s}(x,y) \big(\sigma_N(\tu_N(s,y))-\sigma_{N+1}(\tu_{N+1}(s,y))\big)L(ds,dy)\\
& \quad =1_{[0,\tau_N)}(t) \int_0^t \int_{D} G_{t-s}(x,y) \big(b_N(\tu_N(s,y))-b_{N+1}(\tu_{N+1}(s,y))\big)1_{\{s<\tau_N \}}dyds\\
& \quad \quad + 1_{[0,\tau_N)}(t) \int_0^t \int_{D} G_{t-s}(x,y) \big(\sigma_N(\tu_N(s,y))-\sigma_{N+1}(\tu_{N+1}(s,y))\big)1_{\{s<\tau_N \}}L(ds,dy),
\end{align*}
where in the last line we used the local property given by \eqref{local-Leb} and Lemma
\ref{local-cor}.
%If $|\xi|\leq \cC_{\infty} N$, then $b_N(\xi)=b_{N+1}(\xi)=b(\xi)$ and 
%$\sigma_N(\xi)=\sigma_{N+1}(\xi)=\sigma(\xi)$.
If $t<\tau_N$, then  $|\tu_N(t,x)| \leq \cC_{\infty}N$ for any $x \in D$ (by \eqref{bound-tu}),
and hence,
\begin{align}
\label{bNN}
b_N\big(\tu_N(t,x)\big)& =b_{N+1}\big(\tu_N(t,x)\big)=b\big(\tu_N(t,x)\big) \quad \mbox{for all $x \in D$}, \\
\label{sNN}
\sigma_N\big(\tu_N(t,x)\big) & =\sigma_{N+1}\big(\tu_N(t,x)\big)=\sigma\big(\tu_N(t,x)\big) \quad \mbox{for all $x \in D$}.
\end{align}
It follows that 
\begin{align}
\nonumber
& 1_{[0,\tau_N)}(t) \big(\tu_N(t,x)-\tu_{N+1}(t,x)\big)\\
\nonumber
& \quad =1_{[0,\tau_N)}(t) \int_0^t \int_{D} G_{t-s}(x,y) \big(b_{N}\big(\tu_{N}(s,y)\big)-b_{N}\big(\tu_{N+1}(s,y)\big)\big)1_{\{s<\tau_N \}}dyds\\
\nonumber
& \quad \quad +1_{[0,\tau_N)}(t) \int_0^t \int_{D} G_{t-s}(x,y) \big(\sigma_{N}\big(\tu_{N}(s,y)\big)-\sigma_{N}\big(\tu_{N+1}(s,y)\big)\big)1_{\{s<\tau_N \}} L(ds,dy)\\
\label{uN-incr}
& \quad =: T_1+T_2.
\end{align}
We treat $T_1$ first. We take the square, we bound $1_{[0,\tau_N)}(t)$ by 1, and we use the Cauchy-Schwarz inequality for the $dsdy$ integral. Then we take expectation. We get:
\begin{align*}
\bE[|T_1|^2] & \leq  t |D|\int_0^t \int_{D} G_{t-s}^2(x,y) \bE\big[\big(b_{N}\big(\tu_{N}(s,y)\big)-b_{N}\big(\tu_{N+1}(s,y)\big)\big)^2 1_{\{s<\tau_N \}}\big] dyds \\
& \leq  t |D| \ (L_{N}^{(b)})^2 \int_0^t \int_{D} G_{t-s}^2(x,y) \bE\big[\big(\tu_{N}(s,y)-\tu_{N+1}(s,y)\big)^2 1_{\{s<\tau_N \}}\big] dyds\\
& \leq  t |D| \ (L_{N}^{(b)})^2\int_0^t \sup_{y\in \bR}\bE\big[\big(\tu_{N}(s,y)-\tu_{N+1}(s,y)\big)^2 1_{\{s<\tau_N \}}\big] \left( \sup_{x\in D}\int_{D} G_{t-s}^2(x,y) dy \right) ds,
\end{align*}
where we have used that $b_{N+1}$ is globally Lipschitz.
Similarly,
\begin{align*}
\bE[|T_2|^2] & \leq  m_2 \int_0^t \int_{D} G_{t-s}^2(x,y) \bE\big[\big(\sigma_{N}\big(\tu_{N}(s,y)\big)-\sigma_{N}\big(\tu_{N+1}(s,y)\big)\big)^2 1_{\{s<\tau_N \}}\big] dyds \\
& \leq   m_2 (L_{N}^{(\sigma)})^2\int_0^t \int_{D} G_{t-s}^2(x,y) \bE\big[\big(\tu_{N}(s,y)-\tu_{N+1}(s,y)\big)^2 1_{\{s<\tau_N \}}\big] dyds\\
& \leq  m_2 (L_{N}^{(\sigma)})^2\int_0^t \sup_{y\in \bR}\bE\big[\big(\tu_{N}(s,y)-\tu_{N+1}(s,y)\big)^2 1_{\{s<\tau_N \}}\big]  \left(\sup_{x\in D}\int_{D} G_{t-s}^2(x,y) dy\right)ds.
\end{align*}
We denote
\[
H(t):=\sup_{x\in D}\bE\big[\big(u_N(t,x)-u_{N+1}(t,x)\big)^2 1_{ \{t<\tau_N)\} }  \big].
\]
Hence, we have proved that, for any $t \in [0,T]$,
\[
H(t) \leq C_{T,N}\int_0^t H(s) J(t-s)ds,
\]
where 
\begin{equation}
\label{def-CJ}
C_{T,N}=2T |D| (L_{N}^{(\sigma)})^2+m_2 (L_{N}^{(\sigma)})^2 \quad \mbox{and} \quad J(t):=\sup_{x\in D}\int_{D} G_{t}^2(x,y) dy.
\end{equation}
We observe that by \eqref{square-G}, $\int_0^T J(t)dt<\infty$. 
By Lemma 15 of \cite{dalang99}, $H(t)=0$ for any $t \in [0,T]$. Hence, for any $t \in [0,T]$ and $x \in D$,
\[
\bE\big[\big(u_N(t,x)-u_{N+1}(t,x)\big)^2 1_{ \{t<\tau_N)\} } \big]=0.
\]
Thus, we conclude that \eqref{uN-u1} holds.
\end{proof}

For our developments, we will need a stronger property, which is stated by the following lemma. Note that a similar property is listed on p. 549 of \cite{DKZ19} in the case of the stochastic heat equation with space-time Gaussian white noise.

\begin{lemma}
\label{strong-uNN}
If $\gamma > d$ and Assumption \ref{assumpt} holds, then for all $N \geq 1$,
\begin{equation}
\label{strong-eq}
\bP\big(\tu_N(t,x)=\tu_{N+1}(t,x) \ \mbox{for all $t\in [0,\tau_N)$ and $x \in D$}\big)=1.
\end{equation}
\end{lemma}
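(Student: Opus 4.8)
The plan is to upgrade Lemma \ref{lem-uN-u}, which only asserts the coincidence $\tu_N(t,x)=\tu_{N+1}(t,x)$ a.s.\ for each \emph{fixed} $(t,x)$, into the single-null-set statement \eqref{strong-eq}. The obvious attempt of intersecting the null sets over all $(t,x)$ fails because there are uncountably many such pairs; instead I will reformulate the difference $D_N:=\tu_N-\tu_{N+1}$ as a process living in the Sobolev space $H_r(D)$ and show that $t\mapsto \|D_N(t,\cdot)\|_{H_r(D)}^2$, or rather its version stopped at $\tau_N$, vanishes identically on a single event of probability~1. The engine is Lemma \ref{lem-t-Tu}: on an event of probability~1, $\tu_N=\widetilde{\cT}_N\tu_N$ and $\tu_{N+1}=\widetilde{\cT}_{N+1}\tu_{N+1}$ \emph{for all} $t\ge 0$ simultaneously, where $\widetilde{\cT}_N$ is the operator $\widetilde{\cT}$ built with $(b_N,\sigma_N)$. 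This replaces the ``$(t,x)$-by-$(t,x)$'' mild formula of \eqref{def-uN} by an honest pathwise identity between $H_r(D)$-valued processes.

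\smallskip
\noindent\textbf{Key steps.} First I would fix $N$ and work on the intersection $\Omega_0$ of the two full-probability events from Lemma \ref{lem-t-Tu} (for $\tu_N$ and for $\tu_{N+1}$) together with the event on which $t\mapsto\|\tu_N(t,\cdot)\|_{H_r(D)}$ and $t\mapsto\|\tu_{N+1}(t,\cdot)\|_{H_r(D)}$ are both c\`adl\`ag; on $\Omega_0$ the stopping time $\tau_N$ is defined pathwise and, by the embedding \eqref{bound-tu}, $\tu_N(t,\cdot)$ is bounded by $\cC_\infty N$ on $D$ whenever $t<\tau_N$, so $b_N(\tu_N)=b(\tu_N)=b_{N+1}(\tu_N)$ and similarly for $\sigma$, exactly as in \eqref{bNN}--\eqref{sNN}. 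Next, on $\{t<\tau_N\}$, subtract the two identities $\tu_N=\widetilde{\cT}_N\tu_N$, $\tu_{N+1}=\widetilde{\cT}_{N+1}\tu_{N+1}$, take Fourier coefficients, multiply by $1_{[0,\tau_N)}(t)$, and use the local property of the Lebesgue and stochastic integrals (\eqref{local-Leb} and Lemma \ref{local-cor}) to insert the indicator $1_{\{s<\tau_N\}}$ inside both integrals; on $\{s<\tau_N\}$ the truncated coefficients for level $N$ and $N+1$ agree on the values taken by $\tu_N$, so the right-hand side involves only the differences $b_N(\tu_N(s,y))-b_N(\tu_{N+1}(s,y))$ and $\sigma_N(\tu_N(s,y))-\sigma_N(\tu_{N+1}(s,y))$, which are controlled by the global Lipschitz constants $L_N^{(b)},L_N^{(\sigma)}$. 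Then, defining
\[
\widetilde{H}(t):=\bE\Big[1_{[0,\tau_N)}(t)\,\sup_{\rho\le t}\|D_N(\rho,\cdot)\|_{H_r(D)}^2\Big],
\]
I would bound $\|\cdot\|_{H_r(D)}^2=\sum_k\lambda_k^r|\cF_k(\cdot)|^2$ by the same estimates used for \eqref{bound-I} and \eqref{bound-tZ1}--\eqref{bound-tZ} (Cauchy--Schwarz for the drift term, Doob's maximal inequality for the martingale parts $\widetilde{M}^{(k)},\widetilde{N}^{(k)}$), pulling out $\sum_k\lambda_k^{r-\gamma}<\infty$ via \eqref{sum-lam}, to arrive at
\[
\widetilde{H}(t)\le \big(T|D|(L_N^{(b)})^2+16m_2(L_N^{(\sigma)})^2\big)\Big(\textstyle\sum_{k\ge1}\lambda_k^{r-\gamma}\Big)\int_0^t \widetilde{H}(s)\,ds
\]
for $t\le T$, where the indicator trick $1_{\{s<\tau_N\}}\le 1_{[0,\tau_N)}(s)$ together with monotonicity of $\tau_N\mapsto$ its sublevel sets lets me close the Gronwall loop. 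Gronwall's lemma then forces $\widetilde H\equiv 0$ on $[0,T]$ for every $T$, hence $D_N(t,\cdot)=0$ in $H_r(D)$ for a.e.\ $\omega$ and every rational $t<\tau_N$; right-continuity of $t\mapsto\|D_N(t,\cdot)\|_{H_r(D)}$ on $\Omega_0$ upgrades this to all $t<\tau_N$, and the embedding $H_r(D)\hookrightarrow L^\infty(D)$ (Theorem \ref{embed-th}) gives $\tu_N(t,x)=\tu_{N+1}(t,x)$ for all $x\in D$ and all $t<\tau_N$, on a single event of probability~1.

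\smallskip
\noindent\textbf{Main obstacle.} The delicate point is the interchange between the \emph{pathwise} definition of $\tau_N$ and the fact that the mild/operator identities hold only off null sets: one must be careful that the null sets in Lemma \ref{lem-t-Tu} and in the c\`adl\`ag statements do not depend on $N$ in a way that ruins the construction, and that inserting $1_{\{s<\tau_N\}}$ under the stochastic integral is legitimate — this is exactly why the excerpt isolates \eqref{local-Leb} and the ``local property'' Lemma \ref{local-cor}, and why Lemma \ref{lem-t-Tu} is emphasized as ``crucial.'' A secondary subtlety is that the Gronwall argument requires the stopped quantity $\widetilde H(t)$ to be finite, which follows from property (d) above (the $N$-uniform exponential bound of Theorem \ref{tu-th2}), and that we only get the conclusion on $[0,\tau_N)$, not at $\tau_N$ itself — consistent with the statement of the lemma. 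Everything else is a routine repetition of the second-moment estimates already carried out in the proofs of Theorem \ref{tu-th} and Lemma \ref{lem-uN-u}.
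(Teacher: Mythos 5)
Your proposal is correct and follows essentially the same route as the paper: both use Lemma \ref{lem-t-Tu} to replace the pointwise mild identities by a pathwise operator identity in $H_r(D)$, insert the indicator $1_{\{s<\tau_N\}}$ via the local properties \eqref{local-Leb} and Lemma \ref{local-cor} so that \eqref{bNN}--\eqref{sNN} reduce everything to the globally Lipschitz pair $(b_N,\sigma_N)$, run the Cauchy--Schwarz/Doob estimates on the Fourier coefficients to close a Gronwall inequality for the stopped $H_r(D)$-norm of the difference, and finish with the embedding into $L^\infty(D)$ and an intersection over a countable family. The only cosmetic difference is that the paper keeps $\sup_{t\le T}$ inside the expectation and intersects over rational $T$ at the end, whereas you pass through rational $t$ and right-continuity; both are valid.
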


\begin{proof}
Let $\widetilde{\cT}_N$ be the operator $\widetilde{\cT}$ defined by \eqref{def-TT} with $(b,\sigma)$ replaced by $(b_N,\sigma_N)$. By Lemma \ref{lem-t-Tu},
\[
\bP\big( \tu_N(t,x)=(\widetilde{\cT}_N \tu_N)(t,x) \quad \mbox{for all $t \geq 0$ and $x \in D$} \big)=1.
\]
Let $B_N(t,x)=(\widetilde{\cT}_N \tu_N)(t,x) -(\widetilde{\cT}_{N+1} \tu_{N+1})(t,x)$. Then
\[
\bP\big( \tu_N(t,x)-\tu_{N+1}(t,x)=B_N (t,x) \quad \mbox{for all $t \geq 0$ and $x \in D$} \big)=1,
\]
and hence
\begin{equation}
\label{uN-BN}
\bE\big[\sup_{t\leq T}\|\tu_N(t,\cdot)-\tu_{N+1}(t,\cdot) \|_{H_r(D)}^2 1_{\{t<\tau_N\}}\big]=
\bE\big[\sup_{t\leq T}\|B_N(t,\cdot) \|_{H_r(D)}^2 1_{\{t<\tau_N\}}\big].
\end{equation}
Note that $\|B_N(t,\cdot) \|_{H_r(D)}^2=\sum_{k\geq 1}\lambda_k^r \big(H_k^N(t)\big)^2$, where
\[
H_k^N(t)=\cF_k[(\widetilde{\cT}_N \tu_N)(t,\cdot)]-\cF_k[(\widetilde{\cT}_{N+1} \tu_{N+1})(t,\cdot)].
\]
Hence,
\begin{equation}
\label{bound-BN}
\bE \big[\sup_{t\leq T} \|B_N(t,\cdot) \|_{H_r(D)}^2 1_{\{t<\tau_N\}} \big] \leq \sum_{k\geq 1}\lambda_k^r \bE \big[\sup_{t\leq T} \big(H_k^N(t)\big)^2 1_{\{t<\tau_N\}} \big].
\end{equation}
We have:
\begin{align*}
\cF_k[(\widetilde{\cT}_{N} \tu_{N})(t,\cdot)] &=\frac{1}{\lambda_k^{\gamma/2}} \int_0^t \int_{D}
\sin\big(\lambda_k^{\gamma/2}(t-s)\big)e_k(y) b_{N}\big(\tu_{N}(s,y)\big)dyds\\
& \quad  + \frac{1}{\lambda_k^{\gamma/2}} \sin(\lambda_k^{\gamma/2}t) \int_0^t \int_{D}
\cos(\lambda_k^{\gamma/2}s) e_k(y) \sigma_{N}\big(\tu_{N}(s,y)\big)L(ds,dy)\\
&  \quad - \frac{1}{\lambda_k^{\gamma/2}} \cos(\lambda_k^{\gamma/2}t ) \int_0^t \int_{D}
\sin(\lambda_k^{\gamma/2}s )e_k(y) \sigma_{N}\big(\tu_{N}(s,y)\big)L(ds,dy),
\end{align*}
with the convention that the stochastic integrals above have c\`adl\`ag sample paths.

Recall that if $t<\tau_N$, then \eqref{bNN} and \eqref{sNN} hold, and we can replace $(b_{N},\sigma_{N})$ by $(b_{N+1},\sigma_{N+1})$. More precisely, using the same argument
as in the proof of Lemma \ref{lem-uN-u} (based on the local properties given by \eqref{local-Leb} and Lemma \eqref{local-cor}), we infer that on the event $\{t<\tau_N\}$,
\begin{align*}
\cF_k[(\widetilde{\cT}_{N+1} \tu_{N+1})(t,\cdot)] &=\frac{1}{\lambda_k^{\gamma/2}} \int_0^t \int_{D}
\sin\big(\lambda_k^{\gamma/2}(t-s)\big)e_k(y) b_{N}\big(\tu_N(s,y)\big)dyds\\
&  + \frac{1}{\lambda_k^{\gamma/2}} \sin(\lambda_k^{\gamma/2}t) \int_0^t \int_{D}
\cos(\lambda_k^{\gamma/2}s) e_k(y) \sigma_{N}\big(\tu_{N}(s,y)\big)L(ds,dy)\\
&  - \frac{1}{\lambda_k^{\gamma/2}} \cos(\lambda_k^{\gamma/2}t ) \int_0^t \int_{D}
\sin(\lambda_k^{\gamma/2}s )e_k(y) \sigma_{N}\big(\tu_{N}(s,y)\big)L(ds,dy).
\end{align*}
It follows that
\begin{align*}
& 1_{\{t<\tau_N\}} \lambda_k^{\gamma/2} H_k^N(t)   \\
&   = 1_{\{t<\tau_N\}} \int_0^t \int_{D}
\sin\big(\lambda_k^{\gamma/2}(t-s)\big)e_k(y) \big[b_{N}\big(\tu_N(s,y)\big) - b_{N}\big(\tu_{N+1}(s,y)\big) \big] 1_{\{s<\tau_N\}}  dyds\\
&   + 1_{\{t<\tau_N\}}  \sin(\lambda_k^{\gamma/2}t)  \int_0^t \int_{D}
\cos(\lambda_k^{\gamma/2}s) e_k(y) \big[ \sigma_{N}\big(\tu_{N}(s,y)\big) -
 \sigma_{N}\big(\tu_{N+1}(s,y)\big)  \big]  1_{\{s<\tau_N\}}  L(ds,dy)\\
&   - 1_{\{t<\tau_N\}}  \cos(\lambda_k^{\gamma/2}t ) \int_0^t \int_{D}
\sin(\lambda_k^{\gamma/2}s )e_k(y) \big[ \sigma_{N}\big(\tu_{N}(s,y)\big)-\sigma_{N}\big(\tu_{N+1}(s,y)\big)\big]  1_{\{s<\tau_N\}}  L(ds,dy).
\end{align*}
Using the same argument as for \eqref{bound-I2} and \eqref{bound-tM}, and the Lipschitz property of $b_{N}$ and $\sigma_{N}$, we obtain that:
\begin{align*}
& \bE\big[\sup_{t\leq T} \big(H_k^N(t) \big)^2 1_{\{t<\tau_N\}} \big] \\
& \quad \leq \frac{2}{\lambda_k^{\gamma}} \Big(T|D| L_{N}^{(b)})^2 + 16 m_2 (L_{N}^{(\sigma)})^2 \Big) \int_0^t \int_{D}|e_k(y)|^2 \bE\big[(\tu_N(s,y)-\tu_{N+1}(s,y))^2 1_{\{s<\tau_N\}}\big]dyds.
\end{align*}
By Theorem \ref{embed-th},
\begin{align*}
(\tu_N(s,y)-\tu_{N+1}(s,y))^2 \leq \|\tu_N(s,\cdot)-\tu_{N+1}(s,\cdot)\|_{L^{\infty}(D)}^2 \leq  
\cC_{\infty}^2  \|\tu_N(s,\cdot)-\tu_{N+1}(s,\cdot)\|_{H_r(D)}^2.
\end{align*}
Using also the fact that $\int_{D}|e_k(y)|^2 dy=1$, it follows that
\begin{align*}
& \bE\big[\sup_{t\leq T} \big(H_k^N(t) \big)^2 1_{\{t<\tau_N\}} \big]\leq \\
& \quad \frac{2}{\lambda_k^{\gamma}} \Big(T|D| L_{N}^{(b)})^2 + 16m_2 (L_{N}^{(\sigma)})^2 \Big) \cC_{\infty}^2 \int_0^t  \bE\big[\sup_{\ell \leq s}\|\tu_N(\ell,\cdot)-\tu_{N+1}(\ell,\cdot)\|_{H_r(D)}^2 1_{\{s<\tau_N\}}\big]ds.
\end{align*}
Returning to \eqref{bound-BN}, and using also \eqref{uN-BN}, we obtain:
\begin{align*}
& \bE\big[\sup_{t\leq T} \| \tu_N(t,\cdot) -\tu_{N+1}(t,\cdot) \|_{H_r(D)}^2  1_{\{t<\tau_N\}} \big]\\
& \quad \leq C_N \int_0^t  \bE\big[\sup_{\ell \leq s}\|\tu_N(\ell,\cdot)-\tu_{N+1}(\ell,\cdot)\|_{H_r(D)}^2 1_{\{s<\tau_N\}}\big]ds,
\end{align*}
where 
\begin{equation}
\label{def-CN}
C_N=2\cC_{\infty}^2 \Big(T|D| L_{N}^{(b)})^2 + 16m_2 (L_{N}^{(\sigma)})^2 \Big) \sum_{k\geq 1}\lambda_k^{r-\gamma}.
\end{equation}
 By Gronwall lemma, 
\[
\bE\big[\sup_{t\leq T} \| \tu_N(t,\cdot) -\tu_{N+1}(t,\cdot) \|_{H_r(D)}^2  1_{\{t<\tau_N\}} \big]=0 \quad \mbox{for all $T>0$}.
\]
Hence, $\bP(\Omega_T)=1$ for any $T>0$, where
\[
\Omega_T:=\big\{\sup_{t\leq T} \| \tu_N(t,\cdot) -\tu_{N+1}(t,\cdot) 1_{\{t<\tau_N\}} \|_{H_r(D)}^2  =0 \big\}.
\]
By Theorem \ref{embed-th}, $\Omega_T \subseteq \Omega_T^*$, where
\begin{align*}
\Omega_T^*  & := \big\{\sup_{t\leq T} \| \tu_N(t,\cdot) -\tu_{N+1}(t,\cdot) 1_{\{t<\tau_N\}} \|_{L^{\infty}(D)}^2  =0 \big\}\\
&=\big\{\big(\tu_N(t,x) -\tu_{N+1}(t,x)\big) 1_{\{t<\tau_N\}}  =0 \quad \mbox{for all} \ t\leq T \ \mbox{and} \ x\in D\big\}.
\end{align*}
Finally,
\begin{align*}
\bigcap_{T\in \bQ_{+}}\Omega_T^* & =\big\{\big(\tu_N(t,x) -\tu_{N+1}(t,x)\big) 1_{\{t<\tau_N\}}  =0 \quad \mbox{for all} \ t\geq 0 \ \mbox{and} \ x\in D\big\}
\\
%&= \bigcap_{t\geq 0,x\in D}\Big(\{\tu_N(t,x) =\tu_{N+1}(t,x),t<\tau_N\} \cup \{t\geq %\tau_N\}\Big)\\
&=\big\{\tu_N(t,x) =\tu_{N+1}(t,x)   \quad \mbox{for all} \ t\in [0,\tau_N) \ \mbox{and} \ x\in D\big\},
\end{align*}
where for the last line we used the fact that $\bigcap_{t\geq 0}\{t\geq \tau_N\}=\emptyset$. 

%\textcolor{orange}{Raluca, Juan: I think that the latter condition is not needed to prove the above equality}
\end{proof}

\medskip

\begin{lemma}
\label{tau-incr}
If $\gamma > d$ and Assumption \ref{assumpt} holds, then
for any $N \geq 1$, $\tau_N \leq \tau_{N+1}$ a.s.
\end{lemma}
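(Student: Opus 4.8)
The plan is to read off the claim almost immediately from Lemma \ref{strong-uNN}, with no new estimate required. Fix $N\geq 1$. Lemma \ref{strong-uNN} gives an event $\Omega_N$ with $\bP(\Omega_N)=1$ on which $\tu_N(t,x)=\tu_{N+1}(t,x)$ for all $t\in[0,\tau_N)$ and all $x\in D$. On $\Omega_N$, fix $t\in[0,\tau_N)$: then $\tu_N(t,\cdot)$ and $\tu_{N+1}(t,\cdot)$ agree pointwise on $D$, and since both belong to $H_r(D)$ (by Theorem \ref{tu-th}), they coincide as elements of $H_r(D)$; in particular $\|\tu_{N+1}(t,\cdot)\|_{H_r(D)}=\|\tu_N(t,\cdot)\|_{H_r(D)}$.

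Next I would use the definition of $\tau_N$ as an infimum, which forces $\|\tu_N(t,\cdot)\|_{H_r(D)}\leq N$ for every $t<\tau_N$. Combining the two observations, on $\Omega_N$ we get $\|\tu_{N+1}(t,\cdot)\|_{H_r(D)}\leq N<N+1$ for all $t\in[0,\tau_N)$. Hence the set $\{t>0:\|\tu_{N+1}(t,\cdot)\|_{H_r(D)}>N+1\}$ meets no point of $[0,\tau_N)$, so its infimum, namely $\tau_{N+1}$, is at least $\tau_N$ (with the usual convention $\inf\emptyset=\infty$). Therefore $\tau_N\leq\tau_{N+1}$ on $\Omega_N$, an event of probability $1$.

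I do not expect a genuine obstacle here: the substantive work, the pathwise identification $\tu_N=\tu_{N+1}$ on $[0,\tau_N)$, has already been done in Lemma \ref{strong-uNN} through a Gronwall argument. The only point deserving a word of care is the passage from pointwise equality on $D$ to equality of $H_r(D)$-norms; this is harmless because $H_r(D)$ embeds continuously in $L^2(D)$ (in fact in $L^\infty(D)$, as $r>\tfrac d2$), so replacing $\tu_{N+1}(t,\cdot)$ by its a.e.-equal counterpart $\tu_N(t,\cdot)$ does not change its $H_r(D)$-norm.
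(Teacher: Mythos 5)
Your argument is correct, and it leans on exactly the same key input as the paper, namely the pathwise identification $\tu_N=\tu_{N+1}$ on $[0,\tau_N)\times D$ from Lemma \ref{strong-uNN}; the only difference is in the final deduction. The paper argues by contradiction: assuming $\tau_{N+1}(\omega)<\tau_N(\omega)$, it evaluates both norms at the single time $t=\tau_{N+1}(\omega)$ and invokes the right-continuity of $t\mapsto\|\tu_{N+1}(t,\cdot)\|_{H_r(D)}$ (Lemma \ref{cont-g-lem}.a)) to get $\|\tu_{N+1}(\omega,\tau_{N+1}(\omega),\cdot)\|_{H_r(D)}\geq N+1$, which clashes with the bound $\leq N$ inherited from $\tu_N$. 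You instead observe directly that on the good event $\|\tu_{N+1}(t,\cdot)\|_{H_r(D)}=\|\tu_N(t,\cdot)\|_{H_r(D)}\leq N<N+1$ for every $t<\tau_N$, so the set $\{t>0:\|\tu_{N+1}(t,\cdot)\|_{H_r(D)}>N+1\}$ is contained in $[\tau_N,\infty)$ and its infimum is at least $\tau_N$. This is marginally more economical, since it dispenses with both the contradiction setup and the appeal to right-continuity of the norm path; your side remark that pointwise equality on $D$ forces equality of the Fourier coefficients, hence of the $H_r(D)$-norms, is the right thing to check and is indeed harmless. Either route is perfectly sound.
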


\begin{proof}
Let $N \geq 1$ be arbitrary and $\Omega_N=\{\tau_N \leq \tau_{N+1}\}$. We will prove that
\[
\Omega_N^* \subseteq \Omega_N,
\]
where $\Omega_N^*$ is the event of probability 1 from Lemma \ref{strong-uNN}.
%\[
%\Omega_N^*=\{\tu_N(t,x)=\tu_{N+1}(t,x) \ \mbox{for all}  \ t\in [0,\tau_N) \ \mbox{and} \ x %\in D\}.
%\]

Let $\omega \in \Omega_N^*$ be arbitrary. Then
\begin{equation}
\label{tau-omega}
\tu_N(\omega,t,x)=\tu_{N+1}(\omega,t,x) \quad \mbox{for all $t \in [0,\tau_N(\omega))$ and $x \in D$}.
\end{equation}
Suppose by contradiction that $\omega \in \Omega_N^c$, i.e. $\tau_{N+1}(\omega)<\tau_N(\omega)$.
From \eqref{tau-omega}, we get:
\[
\tu_N(\omega,\tau_{N+1}(\omega),x)=\tu_{N+1}(\omega,\tau_{N+1}(\omega),x) \quad \mbox{for all $x \in D$}.
\]
Therefore,
\begin{equation}
\label{eq1}
\|\tu_N(\omega,\tau_{N+1}(\omega),\cdot)\|_{H_r(D)}=\|\tu_{N+1}(\omega,\tau_{N+1}(\omega),\cdot)\|_{H_r(D)}. 
\end{equation}
We apply Lemma \ref{cont-g-lem}.a) to the right-continuous function $t \mapsto \|\tu_{N+1}(\omega,t,\cdot)\|_{H_r(D)}$ and
\[
\tau_{N+1}(\omega)=\inf\left\{t>0; \|\tu_{N+1}(\omega,t,\cdot)\|_{H_r(D)} > N+1 \right\}.
\]
It follows that 
\begin{equation}
\label{eq2}
\|\tu_{N+1}(\omega,\tau_{N+1}(\omega,\cdot)\|_{H_r(D)} \geq N+1
\end{equation}
(this inequality is in fact valid for any $\omega \in \Omega$).
On the other hand, by the definition of $\tau_N$, 
\[
\|\tu_{N}(\omega,t,x) \|_{H_r(D)} \leq N \quad \mbox{for any $t<\tau_N(\omega)$}.
\]
Taking $t=\tau_{N+1}(\omega)$, we get:
\begin{equation}
\label{eq3}
\|\tu_{N}(\omega,\tau_{N+1}(\omega),x) \|_{H_r(D)} \leq N.
\end{equation}
Summarizing \eqref{eq1}, \eqref{eq2} \eqref{eq3}, we obtain:
\[
N+1\leq \|\tu_{N+1}(\omega,\tau_{N+1}(\omega),\cdot)\|_{H_r(D)} =\|\tu_N(\omega,\tau_{N+1}(\omega),\cdot)\|_{H_r(D)} \leq N,
\]
which is a contradiction. Hence, $\omega \in \Omega_N$.
\end{proof}

\medskip

\begin{lemma}\label{lem:1}
If $\gamma > d$ and Assumption \ref{assumpt} holds, then $\lim_{N \to \infty}\tau_{N}=\infty$ a.s. 
\end{lemma}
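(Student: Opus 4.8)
The plan is to deduce this from the $N$-uniform second-moment estimate established in Theorem~\ref{tu-th2}. By Lemma~\ref{tau-incr} the sequence $(\tau_N)_{N\ge 1}$ is non-decreasing a.s., so $\tau_\infty:=\lim_{N\to\infty}\tau_N$ exists a.s.\ with values in $[0,\infty]$; writing $\{\tau_\infty<\infty\}=\bigcup_{T\in\bN}\{\tau_\infty\le T\}$ and noting $\{\tau_\infty\le T\}\subseteq\{\tau_N\le T\}$ for every $N$, it suffices to prove that $\lim_{N\to\infty}\bP(\tau_N\le T)=0$ for each fixed $T>0$, and then conclude by countable subadditivity.

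First I would establish the pointwise inclusion
\[
\{\tau_N\le T\}\ \subseteq\ \Big\{\sup_{t\le T}\|\tu_N(t,\cdot)\|_{H_r(D)}\ge N\Big\}.
\]
On $\{\tau_N<T\}$ this is immediate from the definition of $\tau_N$ as an infimum (there are times in $[0,T]$ at which $\|\tu_N(t,\cdot)\|_{H_r(D)}$ exceeds $N$); on $\{\tau_N=T\}$ it follows by applying Lemma~\ref{cont-g-lem}.a) to the right-continuous map $t\mapsto\|\tu_N(t,\cdot)\|_{H_r(D)}$, exactly as in the proof of Lemma~\ref{tau-incr}, which yields $\|\tu_N(\tau_N,\cdot)\|_{H_r(D)}\ge N$. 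Combining this inclusion with Markov's inequality,
\[
\bP(\tau_N\le T)\ \le\ \frac{1}{N^2}\,\bE\Big[\sup_{t\le T}\|\tu_N(t,\cdot)\|_{H_r(D)}^2\Big]\ \le\ \frac{T\,\widehat{C}_T\,\exp\!\big(T\,\widehat{C}_T\,\cC_\infty^2\big)}{N^2},
\]
where the second inequality is precisely the bound of Theorem~\ref{tu-th2}. Since the numerator does not depend on $N$, the right-hand side tends to $0$ as $N\to\infty$, hence $\bP(\tau_\infty\le T)=0$ for every $T>0$, and therefore $\bP(\tau_\infty<\infty)=0$; that is, $\lim_{N\to\infty}\tau_N=\infty$ a.s.

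There is no serious obstacle here: all the substantive work was already carried out in Theorem~\ref{tu-th2}, whose whole point is that the exponential bound on $\bE[\sup_{t\le T}\|\tu_N(t,\cdot)\|_{H_r(D)}^2]$ is independent of $N$ (this is exactly why the restriction $r>\tfrac{d}{2}$, and hence $\gamma>d$, is imposed, so that the embedding $H_r(D)\hookrightarrow L^\infty(D)$ of Theorem~\ref{embed-th} is available in its proof). The only mildly delicate point is the boundary case $\tau_N=T$ in the inclusion above, which is handled by right-continuity of $t\mapsto\|\tu_N(t,\cdot)\|_{H_r(D)}$ just as in Lemma~\ref{tau-incr}.
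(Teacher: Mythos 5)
Your proposal is correct and follows essentially the same route as the paper: the inclusion $\{\tau_N\le T\}\subseteq\{\sup_{t\le T}\|\tu_N(t,\cdot)\|_{H_r(D)}\ge N\}$ (which is exactly Lemma \ref{cont-g-lem}.b), which you re-derive from part a)), Chebyshev's inequality combined with the $N$-uniform bound of Theorem \ref{tu-th2}, and then the passage from $\bP(\tau_N\le T)\to 0$ to a.s.\ divergence via monotonicity (the paper packages this last step as Lemma \ref{lim-tau-lem}). No gaps.
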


\begin{proof}
We apply Lemma \ref{cont-g-lem}.b) to the right-continuous function $t \mapsto \|\tu_N(t,\cdot)\|_{H_r(D)}$. Recalling the definition of $\tau_N$, we have:
\[
\{\tau_N \leq T\} \subset \left\{\sup_{t\leq T}\|\tu_N(t,\cdot)\|_{H_r(D)} \geq N \right\}.
\]
Using Chebyshev's inequality and property d) of $\tu_N$ mentioned at the beginning of this section, we get:
\begin{align*}
\bP(\tau_N \leq T) & \leq \bP\left(\sup_{t\leq T}\|\tu_N(t,\cdot)\|_{H_r(D)} \geq N \right)\\
& 
\leq \frac{1}{N^2} \bE\Big[\sup_{t\leq T}\|\tu_N(t,\cdot)\|_{H_r(D)}^2 \Big]\\
& \leq \frac{1}{N^2} T \widehat{C}_T \exp(T \widehat{C}_T \cC_{\infty}^2) \to 0 \quad \mbox{as $N \to \infty$, for any $T>0$}.
\end{align*}
The conclusion follows by Lemma \ref{lim-tau-lem}.
\end{proof}

\begin{lemma}
\label{local-sol}
If $\gamma > d$ and Assumption \ref{assumpt} holds, then $\tu_N$ is a local solution of \eqref{fSWE} up to time $\tau_N$, i.e. for any $t\geq 0$ and $x \in D$,
\begin{align}
\nonumber
1_{\{t<\tau_N\}} \tu_N(t,x)&=
1_{\{t<\tau_N\}} \int_0^t \int_{D}G_{t-s}(x,y) b\big(\tu_N(s,y)\big)dyds \\
\label{main-step1}
& \quad +\ 1_{\{t<\tau_N\}} \int_0^t \int_{D}
G_{t-s}(x,y)\sigma\big(\tu_N(s,y)\big) L(ds,dy) \quad \mbox{a.s.}
\end{align}
\end{lemma}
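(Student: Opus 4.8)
The plan is to start from the mild equation satisfied by $\tu_N$ with the truncated coefficients $(b_N,\sigma_N)$, localize both integrals at the stopping time $\tau_N$, replace $(b_N,\sigma_N)$ by $(b,\sigma)$ on the stochastic interval $[0,\tau_N)$, and then remove the localization. This is essentially a rerun of the manipulations already carried out in the proof of Lemma~\ref{lem-uN-u}.

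\medskip

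Concretely, fix $t\geq 0$ and $x\in D$. Since $\tu_N$ is a modification of $u_N$ satisfying the mild equation \eqref{def-uN} (with $u_N$ replaced by $\tu_N$), multiplying that identity by $1_{\{t<\tau_N\}}$ and applying the local property \eqref{local-Leb} of the Lebesgue integral to the drift term and Lemma~\ref{local-cor} (the local property of the It\^o integral) to the stochastic term — both with the stopping time $\tau_N$ — I can insert the factor $1_{\{s<\tau_N\}}$ inside both integrands without changing the identity:
\begin{align*}
1_{\{t<\tau_N\}}\tu_N(t,x)
&=1_{\{t<\tau_N\}}\int_0^t\int_D G_{t-s}(x,y)\,b_N\big(\tu_N(s,y)\big)\,1_{\{s<\tau_N\}}\,dy\,ds\\
&\quad+1_{\{t<\tau_N\}}\int_0^t\int_D G_{t-s}(x,y)\,\sigma_N\big(\tu_N(s,y)\big)\,1_{\{s<\tau_N\}}\,L(ds,dy)\quad\mbox{a.s.}
\end{align*}
On the event $\{s<\tau_N\}$ one has $\|\tu_N(s,\cdot)\|_{H_r(D)}\leq N$, hence $|\tu_N(s,y)|\leq\cC_\infty N$ for every $y\in D$ by \eqref{bound-tu}; therefore \eqref{bN-b} and \eqref{sN-s} give $b_N(\tu_N(s,y))\,1_{\{s<\tau_N\}}=b(\tu_N(s,y))\,1_{\{s<\tau_N\}}$ and $\sigma_N(\tu_N(s,y))\,1_{\{s<\tau_N\}}=\sigma(\tu_N(s,y))\,1_{\{s<\tau_N\}}$. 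Substituting these into the two integrands and then undoing the localization — applying \eqref{local-Leb} and Lemma~\ref{local-cor} once more to pull the inner factor $1_{\{s<\tau_N\}}$ back out — yields precisely \eqref{main-step1}.

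\medskip

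The step requiring the most care — and the closest thing to an obstacle — is justifying the insertion and subsequent removal of the indicator $1_{\{s<\tau_N\}}$ inside the It\^o integral; this is exactly the content of the local property of the stochastic integral (Lemma~\ref{local-cor}), used as in the proof of Lemma~\ref{lem-uN-u}. One should also check that the integrands are admissible: since $\tu_N$ is a modification of the predictable process $u_N$, the process $\sigma(\tu_N)$ has a predictable version, and by the linear growth of $\sigma$ (Assumption~\ref{assumpt}(ii)) together with the bound $\sup_{t\le T}\sup_{x\in D}\bE[|u_N(t,x)|^2]<\infty$ from Theorem~\ref{exist-th} it belongs to $\cL_{loc}^2$, so the It\^o integrals involved and their localized versions are well defined. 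Beyond this, the lemma is a bookkeeping consequence of the localization lemmas and of the definition of the truncations $b_N,\sigma_N$.
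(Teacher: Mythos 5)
Your proposal is correct and follows essentially the same route as the paper's own proof: multiply the mild equation for $\tu_N$ by $1_{\{t<\tau_N\}}$, insert the indicator $1_{\{s<\tau_N\}}$ via the local properties \eqref{local-Leb} and Lemma~\ref{local-cor}, replace $(b_N,\sigma_N)$ by $(b,\sigma)$ using \eqref{bound-tu}, \eqref{bN-b} and \eqref{sN-s}, and then remove the indicator by the same local properties. Your additional remark on the predictability and square-integrability of the integrands is a harmless (and reasonable) supplement that the paper leaves implicit.
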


\begin{proof}
We fix $(t,x)$.
We write relation \eqref{def-uN} for $\tu_N$ and we multiply it by $1_{\{t<\tau_N\}}$. We obtain:
\begin{align*}
1_{\{t<\tau_N\}} \tu_N(t,x)& =1_{\{t<\tau_N\}} \int_0^t \int_{D}G_{t-s}(x,y)b_N\big(\tu_N(s,y)\big)dyds \\
& \quad + 1_{\{t<\tau_N\}}\int_0^t \int_{D}
G_{t-s}(x,y)\sigma_N\big(\tu_N(s,y)\big)L(ds,dy)\\
& =: \cA_N+\cB_N \quad \mbox{a.s.}
\end{align*}
We treat separately the two terms. For $\cA_N$, we have: 
\begin{align}
\nonumber
\cA_N &= 1_{\{t<\tau_N\}} \int_0^t \int_{D}G_{t-s}(x,y)b_N\big(\tu_N(s,y)\big)1_{\{s<\tau_N\}} dyds\\
\label{A1-1}
&=1_{\{t<\tau_N\}} \int_0^t \int_{D}G_{t-s}(x,y)b\big(\tu_N(s,y)\big)1_{\{s<\tau_N\}} dyds\\
\nonumber
&=1_{\{t<\tau_N\}} \int_0^t \int_{D}G_{t-s}(x,y)b\big(\tu_N(s,y)\big)dyds,
\end{align}
where we used the local property \eqref{local-Leb} of the Lebesgue integral for the first and last equality, and \eqref{bN-b} for the second equality. Similarly, for $\cB_N$, we use the local property of the stochastic integral with respect to $L$ (given by Lemma \ref{local-cor}) and relation \eqref{sN-s}:
\begin{align}
\nonumber
\cB_N &= 1_{\{t<\tau_N\}} \int_0^t \int_{D}G_{t-s}(x,y)\sigma_N\big(\tu_N(s,y)\big)1_{\{s<\tau_N\}} L(ds,dy)\\
\label{A2-1}
&=1_{\{t<\tau_N\}} \int_0^t \int_{D}G_{t-s}(x,y)\sigma\big(\tu_N(s,y)\big)1_{\{s<\tau_N\}} L(ds,dy)\\
\nonumber
&=1_{\{t<\tau_N\}} \int_0^t \int_{D}G_{t-s}(x,y)\sigma\big(\tu_N(s,y)\big)L(ds,dy).
\end{align}
This proves \eqref{main-step1}.

\end{proof}

\medskip

{\bf Proof of Theorem \ref{main}:} 

\medskip 

{\em Step 1.} In this step, we prove that the process $u$ defined by:
\[
u(t,x)=\sum_{N \geq 1}\tu_{N}(t,x)1_{[\tau_{N-1},\tau_N)}(t), \quad \mbox{where $\tau_0=0$},
\]
is a global solution of equation \eqref{fSWE}. By the consistency relation between $u_N$ and $u_{N+1}$ given by Lemma \ref{strong-uNN}, with probability 1, for any $N$,
\begin{equation}
\label{u-uN-id}
u(t,x)=\tu_N(t,x) \quad \mbox{for $t \in [0,\tau_N)$ and $x \in D$}.
\end{equation}
Let $\cA_N$ and $\cB_N$ be as in the proof of Lemma \ref{local-sol}. Recalling \eqref{A1-1} and \eqref{A2-1}, and using \eqref{u-uN-id}, followed by the local properties of the two integrals, we see that:
\begin{align*}
\cA_N&=1_{\{t<\tau_N\}} \int_0^t \int_{D}G_{t-s}(x,y)b\big(u(s,y)\big)dyds,\\
\cB_N&=1_{\{t<\tau_N\}} \int_0^t \int_{D}G_{t-s}(x,y)\sigma\big(u(s,y)\big)L(ds,dy).
\end{align*}
Therefore, relation \eqref{main-step1} becomes:
\begin{align*}
& 1_{\{t<\tau_N\}}u(t,x)\\
& \qquad =1_{\{t<\tau_N\}}\left(\int_0^t \int_{D}G_{t-s}(x,y)b\big(u(s,y)\big)dyds+ \int_0^t \int_{D}G_{t-s}(x,y)\sigma\big(u(s,y)\big)L(ds,dy)\right),
\end{align*}
a.s. We let $N \to \infty$. By Lemma \ref{lem:1}, $\lim_{N \to \infty}1_{\{t<\tau_N\}} =1$ a.s. Hence, for any $t\geq 0$ and $x \in D$,
\[
u(t,x)=\int_0^t \int_{D}G_{t-s}(x,y)b\big(u(s,y)\big)dyds+ \int_0^t \int_{D}G_{t-s}(x,y)\sigma\big(u(s,y)\big)L(ds,dy) \quad \mbox{a.s.}
\]

We now prove that $u$ satisfies \eqref{u-cadlag}. For this, we let
\[
\Omega^*=\bigcap_{N\geq 1}\left\{\tu_N(t,\cdot) \in H_r(D) \ \mbox{for any $t\geq 0$ and $t \mapsto \|\tu_N(t, \cdot) \|_{H_r(D)}$ is c\`adl\`ag}\right\}.
\]
Clearly, $\bP(\Omega^*)=1$. We show that both properties listed in \eqref{u-cadlag} hold on $\Omega^*$. Fix $\omega \in \Omega^*$.

(a) For any $t \geq 0$ arbitrary, there exists an integer $N\geq 1$ (depending on $(\omega,t)$) such that $t <\tau_{N}(\omega)$, and hence $u(\omega,t,\cdot)=\tu_{N}(\omega,t,\cdot)\in H_r(D)$. 

(b) To show that the map $t \mapsto \|u(\omega,t, \cdot) \|_{H_r(D)}$ is c\`adl\`ag at an arbitrary point $t_0$, we consider two cases. (i) If there exists $N$ such that $\tau_{N-1}(\omega)<t_0 <\tau_{N}(\omega)$, then the map $t \mapsto \|u(\omega,t, \cdot) \|_{H_r(D)}$ is c\`adl\`ag at $t_0$ because it coincides with the c\`adl\`ag map
$t \mapsto \|\tu_N(\omega,t, \cdot) \|_{H_r(D)}$ in a neighbourhood of $t_0$. (ii) If $t_0=\tau_N(\omega)$ for some $N\geq 1$, then the map $t \mapsto \|u(t, \cdot) \|_{H_r(D)}$ has a left limit at $t_0$ because it coincides with the c\`adl\`ag map
$t \mapsto \|\tu_N(\omega,t, \cdot) \|_{H_r(D)}$ in a left neighbourhood of $t_0$, and is right-continuous at $t_0$ because it coincides with the c\`adl\`ag map
$t \mapsto \|\tu_{N+1}(\omega,t, \cdot) \|_{H_r(D)}$ in a right neighbourhood of $t_0$.

\medskip

{\em Step 2.} In this step, we prove that $u$ is unique among all global solutions that satisfy \eqref{u-cadlag}. Let $v$ be another global solution of \eqref{fSWE} that satisfies \eqref{u-cadlag}. 
For each integer $N \geq 1$, consider the stopping time
\[ 
\tau_N^v := \inf\{ t > 0 ; \|v(t, \cdot)\|_{H_r(D)} > N \} \quad (\inf \emptyset=\infty).
\]
It is clear that $(\tau_N^v)_{N\ge1}$ is non-decreasing and that $\tau_N^v \to \infty$ 
a.s. as $N \to \infty$.
If $t <\tau_N^v$, then $\|v(t, \cdot)\|_{H_r(D)} \leq  N$, and $\|v(t, \cdot)\|_{L^{\infty}(D)}\leq \cC_{\infty}N$ (by Theorem \ref{embed-th}). So,
\begin{equation}
\label{bN-b-v} b_{N}(v(t,x))=b(v(t,x)) \ \mbox{and} \ \sigma_{N}(v(t,x))=\sigma(v(t,x)) \quad \mbox{if} \ t <\tau_N^v.
\end{equation} 
Hence, for any stopping time $\tau \leq \tau_N^v$:
\begin{align*}
     & v(t,x) 1_{\{t < \tau\}}   \\
     & =1_{\{t < \tau\}} \Bigg( \int_0^t \int_D G_{t-s}(x,y) b_N\big(v(s,y) \big) 1_{\{s < \tau\}}   dyds+ \int_0^t  \int_D G_{t-s}(x,y) \sigma_N \big(v(s,y)\big) 1_{\{s < \tau\}}   L(ds,dy) \Bigg)\\ 
   &=   1_{\{t < \tau\}} \Bigg( \int_0^t \int_D G_{t-s}(x,y) b_N\big(v(s,y) 1_{\{s < \tau\}} \big)  dyds  + \int_0^t  \int_D G_{t-s}(x,y) \sigma_N \big(v(s,y)1_{\{s < \tau\}} \big)  L(ds,dy) \Bigg),
\end{align*}
where for the first equality we used the local property of the stochastic integral, and for the second equality, we used the fact that for any function $f:\bR \to \bR$, 
\begin{align}
\nonumber
G_{t-s}(x,y)f\big(v(s,y)\big) 1_{\{s<\tau\}}& =G_{t-s}(x,y)f\big(v(s,y)1_{\{s<\tau\}}\big) 1_{\{s<\tau\}}\\
\label{1-inside}
&=G_{t-s}(x,y)f\big(v(s,y)1_{\{s<\tau\}}\big) \quad \mbox{on $\{t<\tau\}$}.
\end{align}
(For the last equality, $G_{t-s}(x,y)1_{\{s<\tau\}}=G_{t-s}(x,y)$ on  $\{t<\tau\}$, since $G_t=0$ if $t<0$). 
The same equality holds for the global solution $u$ constructed in {\em Step 1}, for any stopping time $\tau \leq \tau_N$. We use these two equalities for $\tau=\tau_N^v \wedge \tau_N=:T_N$. We obtain:
\begin{align*}
   &\Big( u(t,x) - v(t,x)  \Big) 1_{\{t < T_N\}} \\
   &= 1_{\{t < T_N\}}  \Bigg( \int_0^t \int_D G_{t-s}(x,y) \Big( b_N\big(u(s,y)  1_{\{s < T_N\}} \big) - b_N\big(v(s,y)  1_{\{s < T_N \}} \big) \Big) dyds  \\
     & \qquad\qquad + \int_0^t  \int_D G_{t-s}(x,y)  \Big( \sigma_N\big(u(s,y)  1_{\{s < T_N\}} \big) - \sigma_N\big(v(s,y)  1_{\{s < T_N\}}\big) \Big) L(ds,dy) \Bigg).
\end{align*}
Define
\[
g(t)=\sup_{x \in  D } \bE \Big[\big(u(t,x) - v(t,x) \big)^2 1_{\{t < T_N \}} \Big].
\]
Since $b_N$ and $\sigma_N$ are globally Lipschitz, 
arguing as in the proof of Lemma \ref{lem-uN-u}, we get:
\[
g(t) \le C_{T,N} \int_0^t g(s)J(t-s)ds \quad \mbox{for any $t \in [0,T]$},
\]
where $C_{T,N}$ and $J$ are defined by \eqref{def-CJ}.
By Lemma 15 of \cite{dalang99}, it follows that $g(t)=0$ for any $t \geq 0$. Hence, for any  $t \geq 0$ and $x \in D$,
\[
u(t,x) 1_{\{t < T_N\}}
= 
v(t,x) 1_{\{t < T_N\}}
\quad \text{a.s.}
\]
Letting $N \to \infty$ in the equality above, and using the fact that $\lim_{N \to \infty} T_N = \infty$ a.s., we conclude that for every 
$(t,x) \in \mathbb{R}_+ \times D$, one has $u(t,x) = v(t,x)$ a.s.
\qed

%%%%%%%%%%%%%%%%%%%%%%%%%%%%%%%%%%%%%%%%%%%%%%%%%%%%%%%%%%%%%%%%%%%%

\section{The infinite variance case}
\label{section-inf}

In this section, we consider the case of equation \eqref{fSWE2} driven by a L\'evy basis $\Lambda$, where we assume that the Lévy measure $\nu$ is symmetric. We include some background material in Section \ref{subsect-Levy-bases}, and we present the proof of Theorem \ref{main2} in Section \ref{section-inf-var}.

\subsection{Integration with respect to L\'evy bases}
\label{subsect-Levy-bases}

In this section, we recall some basic material related to integration with respect to L\'evy bases. We refer the reader to \cite{chong17-JTP,chong17-SPA} for more details.

For any $A \in \cP_b$, let $\int 1_{A}d\Lambda:=\Lambda(A)$. By linearity, we extend this
 definition to the class $\cS$ of linear combinations of indicators $1_A$ with $A \in \cP_b$. For any predictable process $H$, we define the {\em Daniell mean}: 
\[
\|H\|_{\Lambda}=\sup_{S \in \cS,|S|\leq |H|}\left\|\int S d\Lambda \right\|_{L^0(\Omega)},
\]
where $L^0(\Omega)$ is the set of all random variables defined on $(\Omega,\cF,\bP)$ equipped with the pseudo-norm $\|X\|_{L^0(\Omega)}=\bE[|X|\wedge 1]$.
%Note that the Daniell mean satisfies the triangular inequality: 
%\begin{equation}
%\label{triang-Daniell}
%\|H_1+H_2\|_{\Lambda} \leq \|H_1\|_{\Lambda} +\|H_2\|_{\Lambda}.
%\end{equation}

\begin{definition}
\label{def-integrable}
{\rm
A predictable process $H$ is {\em integrable} with respect to $\Lambda$ if there exists a sequence $(S_n)_{n\geq 1}$ in $\cS$ such that $\|S_n-H\|_{\Lambda}\to 0$ as $n \to \infty$. 
We denote by $L^{0}(\Lambda)$ the class of integrable processes with respect to $\Lambda$.
%which is the closure of $\cS$ with respect to $\|\cdot\|_{\Lambda}$.  
}
\end{definition}

%If $Z$ is the ID independently scattered random measure induced by $\Lambda$ (via relation %\eqref{def-Z}), by abuse of terminology, we let 
%\begin{equation}
%\label{convention}
%L^{0}(Z)=L^{0}(\Lambda) \quad \mbox{and} \quad \|\cdot\|_{Z}=\| \cdot\|_{\Lambda}
%\end{equation}

For any $S \in \cS$, we denote $I^{\Lambda}(S)=\int S d\Lambda$. The map $I^{\Lambda}: \cS \to L^0(\Omega)$ is {\em not} an isometry. But the fact that this map satisfies the following trivial inequality
%\begin{equation}
%\label{contraction}
\[ \|I^{\Lambda}(S)\|_{L^0(\Omega)}\leq \|S\|_{\Lambda} \quad \mbox{for all $S \in \cS$}\]
%\end{equation}
is sufficient for extending $I^{\Lambda}$ from $\cS$ to $L^{0}(\Lambda)$. More precisely, if $H \in L^{0}(\Lambda)$ and $(S_n)_{n\geq 1}$ is the approximating sequence of simple integrands given by Definition \ref{def-integrable}, then 
$\{I^{\Lambda}(S_n)\}_{n\geq 1}$ is a Cauchy sequence in $L^0$ since
\begin{align*}
\|I^{\Lambda}(S_n)-I^{\Lambda}(S_m)\|_{L^0(\Omega)} \leq \|S_n-S_m\|_{\Lambda} \leq \|S_n-H\|_{\Lambda}+
\|S_m-H\|_{\Lambda} \to 0,
\end{align*}
as $n,m \to \infty$. By definition, we set
$I^{\Lambda}(H)=\lim_{n \to \infty} I^{\Lambda}(S_n)$ in $L^0(\Omega)$,
and we say that $I^{\Lambda}(H)$ is the {\em stochastic integral} of $H$ with respect to $\Lambda$. We use the notation:
\[
I^{\Lambda}(H)=\int_0^{\infty}\int_{\bR^d} H(t,x) \Lambda(dt,dx). %=\int H dZ=I^{Z}(H).
\]
%By approximation, it follows that inequality \eqref{contraction} can be extended to %$L^{0}(\Lambda)$:
%\begin{equation}
%\label{contraction2}
%\[ \|I^{\Lambda}(H)\|_{L^0(\Omega)}\leq \|H\|_{\Lambda} \quad \mbox{for all $H \in %L^{0}(\Lambda)$}.\]
%\end{equation}

\subsection{Proof of Theorem \ref{main2}}
\label{section-inf-var}

In this section, we give the proof of Theorem \ref{main2}. As mentioned in the Introduction, we consider first the equation with truncated noise $\Lambda^K$ (which has a global solution $u^K$), and then we paste the solutions $(u^K)_{K\geq 1}$ to construct a global solution of equation \eqref{fSWE2}.

\medskip

The truncated noise $\Lambda^K$ is defined by:
\begin{equation}
\label{def-LK}
\Lambda^K(A)=\int_{0}^{\infty}\int_D \int_{\{|z|\leq 1\}}1_{A}(t,x)z \widetilde{J}(dt,dx,dz)+
\int_{0}^{\infty}\int_D \int_{\{1<|z|\leq K\}}1_{A}(t,x)z J(dt,dx,dz),
\end{equation}
for any $A \in \cP_b$, and the associated stopping time is:
\begin{equation}
\label{def-tauK}
\tau^K=\inf\{t\geq 0; J\big([0,t] \times D \times \{|z|>K\}\big)>0 \} \quad (\inf \emptyset=\infty).
\end{equation}
Clearly, $\tau^K  \leq \tau^{K+1}$ for any $K\geq 1$. Moreover, by Lemma \ref{lim-tau-lem}, $\lim_{K \to \infty}\tau^K =\infty$ a.s., since for any $T>0$,
\[
\bP(\tau^K >T) =\bP\big(J\big([0,T] \times D \times \{|z|>K\}\big)=0\big)=e^{-T|D| \nu(\{|z|>K\})} \to 1 \quad \mbox{as} \ K \to \infty.
\]
Note that, for any $t>0$ and $A \in \cP_b$ with $A \subseteq \Omega \times [0,t] \times D$,
\begin{equation}
\label{LLK}
\Lambda(A)=\Lambda^K(A) \quad \mbox{on the event $\{t <\tau^K\}$}.
\end{equation}
Next, we observe that, due to the {\em symmetry} of $\nu$, for any $B \in \cB_b(\bR_{+}\times \bR^d)$, 
\begin{align*}
\int_{B \times \{1<|z|\leq K\}}z \widetilde{J}(dt,dx,dz)&=
\int_{B \times \{1<|z|\leq K\}}z J(dt,dx,dz)-|B|\int_{\{1<|z|\leq K\}}z \nu(dz)\\
&=\int_{B \times \{1<|z|\leq K\}}z J(dt,dx,dz),
\end{align*}
and therefore, for any $B \in \cB_b(\bR_{+}\times \bR^d)$,
\[
L^K(B):=\Lambda^K(\Omega \times B)=\int_{B \times \{|z|\leq K\}}z \widetilde{J}(dt,dx,dz).
\]
Hence, the process $L^K=\{L^K(B),B \in \cB_b(\bR_{+}\times \bR^d)\}$ is a (finite variance) L\'evy white noise as in \eqref{def-L1}. More precisely, for any $B \in \cB_b(\bR_{+}\times \bR^d)$, $L^K(B)$ can be written as:
\[
L^K(B)=\int_{B \times \bR_0}z \widetilde{J}^K(dt,dx,dz),
\]
where $J^K$ is the restriction of $J$ to $\bR_+ \times \bR^d \times \{|z|\leq K\}$, and $\widetilde{J}^K$ is the compensated version of $J^K$. The L\'evy measure of $L^K$ is $\nu^K:=\nu(\cdot \cap \{|z|\leq K\})$, which satisfies:
\[
m_2^K:=\int_{\bR_0}z^2 \nu^K(dz)=\int_{\{0<|z|\leq K\}}z^2 \nu(dz)<\infty.
\]

To be able to import the results from the previous section, we use the following fact:
if a predictable process $H$ is It\^o integrable with respect to $L^K$, then $H$ is integrable with respect to $\Lambda^K$ (in the sense of Definition \ref{def-integrable}), and the two integrals coincide:
\begin{equation}
\label{int-coincide}
\int_0^t \int_{\bR^d} H(s,x)L^K(ds,dx)=\int_0^t \int_{\bR^d} H(s,x) \Lambda^{K}(ds,dx) \quad \mbox{a.s.}
\end{equation}

\medskip

We now return to our problem. By Theorem \ref{main}, we know that if $\gamma>d$ and Assumption \ref{assumpt} holds, then equation \eqref{fSWE} with noise $L$ replaced by $L^K$ has a unique global solution $u^K$.
This means that for any $t\geq 0$ and $x \in D$,
\[
u^K(t,x)=\int_0^t \int_{D}G_{t-s}(x,y)b\big(u^K(s,y)\big)dyds+\int_0^t \int_{D}G_{t-s}(x,y)
\sigma\big(u^K(s,y)\big)L^K(ds,dy)  \quad \mbox{a.s.}
\]
In view of \eqref{int-coincide}, this implies that for any $t\geq 0$ and $x \in D$,
\begin{equation}
\label{uK-sol}
u^K(t,x)=\int_0^t \int_{D}G_{t-s}(x,y)b\big(u^K(s,y)\big)dyds+\int_0^t \int_{D}G_{t-s}(x,y)
\sigma\big(u^K(s,y)\big)\Lambda^K(ds,dy) \quad \mbox{a.s.}
\end{equation}
We recall that the solution $u^K$ is given by:
\[
u^K(t,x)=\sum_{N\geq 1}\tu_N^K(t,x)1_{[\tau_{N-1}^K,\tau_N^K)}(t) \quad \mbox{with $\tau_0^K=0$},
\]
where $u_N^K$ is the solution of equation \eqref{fSWE-N} with noise $L$ replaced by $L^K$  (and truncated coefficients $b_N$ and $\sigma_N$), $\tu_N^K$ is the modification of $u_N^K$ given by Theorem \ref{tu-th}, and
\[
\tau_N^K=\inf\{t\geq 0;\|\tu_N^K(t,\cdot)\|_{H_r(D)}>N\} \quad (\inf \emptyset=\infty).
\]
By Lemma \ref{strong-uNN}, with probability 1, for any $K$ and $N$,
\begin{equation} 
\label{consist1}
u^K(t,x)=\tu_N^K(t,x) \quad \mbox{for all $t\in [0,\tau_N^K)$ and $x \in D$}.
\end{equation}

\medskip

\begin{lemma}
\label{uu-lem}
If $\gamma>d$, Assumption \ref{assumpt} holds, and $\nu$ is symmetric, 
then for any $K \geq 1$,
\[
\bP\big(u^K(t,x)=u^{K+1}(t,x) \quad \mbox{for all $t \in [0,\tau^K)$ and $x \in D$} \big)=1.
\]
\end{lemma}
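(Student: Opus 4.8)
The plan is to reduce everything to the globally Lipschitz equations and then paste over the truncation level $N$. Recall from \eqref{consist1} that $u^K(t,x)=\tu_N^K(t,x)$ for $t\in[0,\tau_N^K)$, where $\tu_N^K$ is the modification (Theorem \ref{tu-th}) of the solution $u_N^K$ of \eqref{fSWE-N} with $L$ replaced by $L^K$; since $b_N,\sigma_N$ are globally Lipschitz, these auxiliary processes are uniquely determined, which is what makes a Gronwall comparison available. So I would first prove that, for each fixed $N\ge 1$,
\[
\bP\big(\tu_N^K(t,x)=\tu_N^{K+1}(t,x)\ \text{for all }t\in[0,\tau^K)\text{ and }x\in D\big)=1.
\]

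The first key point is that $L^K$ and $L^{K+1}$ "agree up to $\tau^K$": for any predictable process $H$ one has $\int_0^\infty\int_D H(s,y)1_{\{s<\tau^K\}}L^{K+1}(ds,dy)=\int_0^\infty\int_D H(s,y)1_{\{s<\tau^K\}}L^K(ds,dy)$ a.s. This is where the symmetry of $\nu$ is used: $L^{K+1}-L^K$ only records jumps of $J$ with $K<|z|\le K+1$, none of which occur before $\tau^K$ (by \eqref{def-tauK}), while its compensator $|B|\int_{\{K<|z|\le K+1\}}z\,\nu(dz)$ vanishes because $\nu$ is symmetric. Granting this, I would write the mild equation \eqref{def-uN} for $u_N^K$ and for $u_N^{K+1}$, multiply through by $1_{\{t<\tau^K\}}$, and use the local properties \eqref{local-Leb} and Lemma \ref{local-cor} (exactly as in the proof of Lemma \ref{lem-uN-u}) to see that, on $\{t<\tau^K\}$, both $u_N^K$ and $u_N^{K+1}$ satisfy the same mild equation driven by $L^K$. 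Applying Lemma 15 of \cite{dalang99} to $H(t):=\sup_{x\in D}\bE\big[1_{\{t<\tau^K\}}(u_N^K(t,x)-u_N^{K+1}(t,x))^2\big]$ — using the global Lipschitz constants of $b_N,\sigma_N$, the It\^o isometry, and \eqref{square-G} — gives $H\equiv 0$, hence $\tu_N^K(t,x)=\tu_N^{K+1}(t,x)$ a.s. on $\{t<\tau^K\}$ for each fixed $(t,x)$.

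To promote this to the "for all $(t,x)$" version, I would repeat the Fourier-coefficient and Sobolev-embedding argument of the proof of Lemma \ref{strong-uNN}: by Lemma \ref{lem-t-Tu}, $\tu_N^K=\widetilde{\cT}_N^K\tu_N^K$ and $\tu_N^{K+1}=\widetilde{\cT}_N^{K+1}\tu_N^{K+1}$ (the operator of \eqref{def-TT} with $(b_N,\sigma_N)$ and noise $L^K$, resp.\ $L^{K+1}$); multiplying by $1_{\{t<\tau^K\}}$ and using the noise-coincidence above to replace $L^{K+1}$ by $L^K$, one estimates $\bE\big[\sup_{t\le T}\lambda_k^r\big(\cF_k[(\tu_N^K-\tu_N^{K+1})(t,\cdot)]\big)^2 1_{\{t<\tau^K\}}\big]$ in terms of $\int_0^T\bE\big[\sup_{\ell\le s}\|\tu_N^K(\ell,\cdot)-\tu_N^{K+1}(\ell,\cdot)\|_{H_r(D)}^2 1_{\{s<\tau^K\}}\big]ds$ via the embedding $H_r(D)\hookrightarrow L^\infty(D)$ (here $r>d/2$) and the global Lipschitz property of $b_N,\sigma_N$, sums over $k$ using \eqref{sum-lam}, and closes with Gronwall's lemma to obtain $\bE\big[\sup_{t\le T}\|\tu_N^K(t,\cdot)-\tu_N^{K+1}(t,\cdot)\|_{H_r(D)}^2 1_{\{t<\tau^K\}}\big]=0$ for every $T$; intersecting over $T\in\bQ_+$ and invoking the embedding once more yields the pointwise identity on $[0,\tau^K)$.

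Finally I would paste over $N$. On the intersection over all $N\ge 1$ of the events above, together with the events where \eqref{consist1} holds for $K$ and for $K+1$ and where $\tau_N^K\to\infty$, $\tau_N^{K+1}\to\infty$ as $N\to\infty$ (Lemma \ref{lem:1}), fix $t<\tau^K$ and $x\in D$ and choose $N$ with $t<\tau_N^K\wedge\tau_N^{K+1}$: then $u^K(t,x)=\tu_N^K(t,x)=\tu_N^{K+1}(t,x)=u^{K+1}(t,x)$, which is the assertion. I expect the main obstacle to be the promotion step — one must keep careful track of the c\`adl\`ag martingale modifications entering $\widetilde{\cT}_N$ and of the indicator $1_{\{\cdot<\tau^K\}}$ so that replacing $L^{K+1}$ by $L^K$ inside the stochastic integrals is justified — but this is a verbatim adaptation of computations already carried out in Lemmas \ref{lem-uN-u} and \ref{strong-uNN}, with $(N,N+1)$ there playing the role of $(K,K+1)$ here, the new ingredient being only the symmetry of $\nu$.
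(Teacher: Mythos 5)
Your proposal is correct and follows essentially the same route as the paper: both hinge on the coincidence of the truncated noises up to $\tau^K$ (where the symmetry of $\nu$ kills the compensator of the jumps in $\{K<|z|\leq K+1\}$), a Gronwall estimate on $\bE\big[\sup_{t\leq T}\|\cdot\|_{H_r(D)}^2\,1_{\{t<\cdot\}}\big]$ built from the Fourier-coefficient representation via $\widetilde{\cT}_N$, the Lipschitz constants of $(b_N,\sigma_N)$, the embedding $H_r(D)\hookrightarrow L^{\infty}(D)$, and a final pasting over $N$ using $\tau_N^K\to\infty$. The only (harmless) organizational difference is that you compare the globally defined processes $\tu_N^K$ and $\tu_N^{K+1}$ on $[0,\tau^K)$ and transfer to $u^K,u^{K+1}$ afterwards, whereas the paper compares $u^K$ and $u^{K+1}$ directly on $[0,\rho_N^K)$ with $\rho_N^K=\tau_N^K\wedge\tau_N^{K+1}\wedge\tau^K$; your preliminary fixed-$(t,x)$ step via Lemma 15 of \cite{dalang99} is subsumed by the promotion step and could be omitted.
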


\begin{proof}
We proceed as in the proof of Lemma \ref{strong-uNN}. To compare $u^K$ with $u^{K+1}$, we will use their respective approximations $\tu_{N}^K$ and $\tu_{N}^{K+1}$.

By Lemma \ref{lem-t-Tu}, with probability 1, $\tu_N^K=\widetilde{\cT}_N^K \tu_N^K$, where 
$\widetilde{\cT}_N^K$ is defined as $\widetilde{\cT}$ (see \eqref{def-TT}), but with $(b,\sigma,L)$ replaced by $(b_N,\sigma_N,L^K)$. Hence, by \eqref{consist1},  on the event $\{t<\tau_N^K\}$, 
\begin{align}
\nonumber
\cF_k[u^K(t,\cdot)]&=\cF_k[\tu_N^K(t,\cdot)]=\cF_k[(\widetilde{\cT}_N^K \tu_N^K)(t,\cdot)]\\
&=\frac{1}{\lambda_k^{\gamma/2}} \int_0^t \int_{D} \sin\big(\lambda_k^{\gamma/2}(t-s) \big)e_k(y) b_N \big( \tu_{N}^K(s,y)\big) dyds\\
\nonumber
&+\frac{1}{\lambda_k^{\gamma/2}} \sin(\lambda_k^{\gamma/2} t) \int_0^t \int_{D} \cos\big(\lambda_k^{\gamma/2}s \big)e_k(y) \sigma_N \big( \tu_{N}^K(s,y)\big) \Lambda^K(ds,dy)\\
\label{Fk-uK}
&-\frac{1}{\lambda_k^{\gamma/2}} \cos(\lambda_k^{\gamma/2} t) \int_0^t \int_{D} \sin\big(\lambda_k^{\gamma/2}s \big)e_k(y) \sigma_N \big( \tu_{N}^K(s,y)\big) \Lambda^K(ds,dy),
\end{align}
with the convention that the stochastic integrals above have c\`adl\`ag sample paths.

Similarly, on the event
$\{t<\tau_N^{K+1}\}$, 
\begin{align}
\nonumber
\cF_k[u^{K+1}(t,\cdot)]&=\frac{1}{\lambda_k^{\gamma/2}} \int_0^t \int_{D} \sin\big(\lambda_k^{\gamma/2}(t-s) \big)e_k(y) b_N \big( \tu_{N}^{K+1}(s,y)\big) dyds\\
\nonumber
&+\frac{1}{\lambda_k^{\gamma/2}} \sin(\lambda_k^{\gamma/2} t) \int_0^t \int_{D} \cos\big(\lambda_k^{\gamma/2}s \big)e_k(y) \sigma_N \big( \tu_{N}^{K+1}(s,y)\big) \Lambda^{K+1}(ds,dy)\\
\label{Fk-uK+1}
&-\frac{1}{\lambda_k^{\gamma/2}} \cos(\lambda_k^{\gamma/2} t) \int_0^t \int_{D} \sin\big(\lambda_k^{\gamma/2}s \big)e_k(y) \sigma_N \big( \tu_{N}^{K+1}(s,y)\big) \Lambda^{K+1}(ds,dy).
\end{align}
The idea is to transform the integral with respect to $\Lambda^{K+1}$ into an integral with respect to $\Lambda^{K}$, so that we can compare the two Fourier coefficients above.
Note that by \eqref{LLK}, for any $A \in \cP_b$ with $A \subset \Omega \times [0,t]\times \bR^d$,
\begin{equation}
\label{Lambda}
\Lambda^{K}(A)=\Lambda^{K+1}(A)=\Lambda(A) \quad \mbox{on the event $\{t<\tau^K\}$}.
\end{equation}
Since the stopping times $\tau_{N}^K$, $\tau_{N}^{K+1}$ and $\tau^K$ are not comparable to each other, we consider:
\[
\rho_N^K:=\tau_{N}^K \wedge \tau_{N}^{K+1}\wedge \tau^K.
\]
Using Lemma \ref{local-cor2} with $\Lambda=\Lambda^{K}$ and $\tau=\rho_{N}^{K}$, we obtain that:
\begin{align*}
1_{\{t<\rho_N^K\}}\int_0^t \int_{\bR^d}H(s,x)\Lambda^{K}(ds,dx)&=
1_{\{t<\rho_N^K\}}\int_0^t \int_{\bR^d}H(s,x) 1_{\{s<\rho_N^K \}} \Lambda^{K}(ds,dx),
\end{align*}
for a suitable predictable process $H$. Using this relation for the two stochastic integrals on the right hand-side of \eqref{Fk-uK}, we obtain that on the event $\{t<\rho_N^K\}$,
\begin{align*}
\cF_k[u^{K}(t,\cdot)]&=\frac{1}{\lambda_k^{\gamma/2}} \int_0^t \int_{D} \sin\big(\lambda_k^{\gamma/2}(t-s) \big)e_k(y) b_N \big( \tu_{N}^{K}(s,y)\big) 1_{\{s<\rho_N^K \}} dyds\\
&+\frac{1}{\lambda_k^{\gamma/2}} \sin(\lambda_k^{\gamma/2} t) \int_0^t \int_{D} \cos\big(\lambda_k^{\gamma/2}s \big)e_k(y) \sigma_N \big( \tu_{N}^{K}(s,y)\big) 1_{\{s<\rho_N^K \}} \Lambda^{K}(ds,dy)\\
&-\frac{1}{\lambda_k^{\gamma/2}} \cos(\lambda_k^{\gamma/2} t) \int_0^t \int_{D} \sin\big(\lambda_k^{\gamma/2}s \big)e_k(y) \sigma_N \big( \tu_{N}^{K}(s,y)\big) 1_{\{s<\rho_N^K \}} \Lambda^{K}(ds,dy).
\end{align*}
Using Lemma \ref{local-cor2} with $\Lambda=\Lambda^{K+1}$ and $\tau=\rho_{N}^K$, followed by \eqref{Lambda}, we obtain that:
\begin{align*}
1_{\{t<\rho_N^K\}}\int_0^t \int_{\bR^d}H(s,x)\Lambda^{K+1}(ds,dx)&=
1_{\{t<\rho_N^K\}}\int_0^t \int_{\bR^d}H(s,x) 1_{\{s<\rho_N^K \}} \Lambda^{K+1}(ds,dx)\\
&=
1_{\{t<\rho_N^K\}}\int_0^t \int_{\bR^d}H(s,x) 1_{\{s<\rho_N^K \}} \Lambda^{K}(ds,dx).
\end{align*}
We use this relation for the two stochastic integrals on the right hand-side of \eqref{Fk-uK+1}. It follows that on the event $\{t<\rho_N^K\}$,
\begin{align*}
\cF_k[u^{K+1}(t,\cdot)]&=\frac{1}{\lambda_k^{\gamma/2}} \int_0^t \int_{D} \sin\big(\lambda_k^{\gamma/2}(t-s) \big)e_k(y) b_N \big( \tu_{N}^{K+1}(s,y)\big) 1_{\{s<\rho_N^K \}} dyds\\
&+\frac{1}{\lambda_k^{\gamma/2}} \sin(\lambda_k^{\gamma/2} t) \int_0^t \int_{D} \cos\big(\lambda_k^{\gamma/2}s \big)e_k(y) \sigma_N \big( \tu_{N}^{K+1}(s,y)\big) 1_{\{s<\rho_N^K \}} \Lambda^{K}(ds,dy)\\
\label{Fk-uK}
&-\frac{1}{\lambda_k^{\gamma/2}} \cos(\lambda_k^{\gamma/2} t) \int_0^t \int_{D} \sin\big(\lambda_k^{\gamma/2}s \big)e_k(y) \sigma_N \big( \tu_{N}^{K+1}(s,y)\big) 1_{\{s<\rho_N^K \}} \Lambda^{K}(ds,dy).
\end{align*}
The two Fourier transform expressions now have the {\em same} integrator $\Lambda^{K}$, which means that we are in the position to use a similar argument as in the proof of Lemma \ref{strong-uNN}. We will use:
(i) Cauchy-Schwarz inequality for the Lebesgue integral, which produces the factor $T|D|$; 
(ii) Doob maximal inequality for each (c\`adl\`ag) stochastic integral with respect to the $\Lambda^{K}$, which produces the factor $4 m_2^{K}$; (iii) the Lipschitz properties of the functions $b_N$ and $\sigma_N$, which produce the factors $L_{N}^{(b)}$, respectively $L_{N}^{(\sigma)}$; (iv) the embedding of $H_r(D)$ into $L^{\infty}(D)$, which produces the factor $\cC_{\infty}$. To be consistent with all the estimates used above, we use the inequality $(a+b+c)^2 \leq 2 [a^2+2(b^2+c^2)]$ to separate the three terms. 

We obtain:
\begin{align*}
& \bE\Big[\sup_{t\leq T} \|u^K(t,\cdot)-u^{K+1}(t,\cdot)\|_{H_r(D)}^2 1_{\{t<\rho_N^K\}} \Big]\\
& \qquad \leq C_N^K \int_0^T \bE\big[\sup_{\ell \leq s}\|u^K(s,\cdot)-u^{K+1}(s,\cdot)\|_{H_r(D)}^2 1_{\{s<\rho_K^N\}}  \big] ds,
\end{align*}
where $C_N^K=2 \cC_{\infty}^2 \big(T|D| (L_N^{(b)})^2 + 16 m_2^K (L_N^{(\sigma)})^2 \big)\sum_{k\geq 1}\lambda_{k}^{r-\gamma}$ has the same form as $C_N$ given by \eqref{def-CN}, but with $m_2$ replaced by $m_2^K$. By Gronwall lemma,
\[
\bE\Big[\sup_{t\leq T} \|u^K(t,\cdot)-u^{K+1}(t,\cdot)\|_{H_r(D)}^2 1_{\{t<\rho_N^K\}} \Big]=0 \quad \mbox{for all} \quad T>0.
\]
From this, as in the proof of Lemma \ref{strong-uNN}, we infer that $P(A_N^{K})=1$ for all $N,K\geq 1$, where
\[
A_N^K=\big\{u^K(t,x)=u^{K+1}(t,x)\quad \mbox{for all $t \in [0,\rho_N^K)$ and $x\in D$} \big\}.
\]
Note that $A_{N+1}^K \subseteq A_N^K$, since $\rho_{N+1}^K \leq \rho_N^K$. Finally, since $\lim_{N \to \infty}\rho_N^K=\tau^K$, it follows that
\[
A^K:=\bigcap_{N\geq 1}A_N^K=\big\{u^K(t,x)=u^{K+1}(t,x)\quad \mbox{for all $t \in [0,\tau^K)$ and $x\in D$} \big\},
\] 
and hence, $\bP(A^K)=\lim_{N\to \infty}\bP(A_N^K)=1$ for all $K\geq 1$.

\end{proof}

\bigskip
{\bf Proof of Theorem \ref{main2}:} For any $t\geq 0$ and $x \in D$, we let
\[
u(t,x)=\sum_{K\geq 1}u^K(t,x)1_{[\tau^{K-1},\tau^K)}(t) \quad \mbox{with $\tau^0=0$}.
\]
By Lemma \ref{uu-lem}, 
\begin{equation}
\label{uuk}
u(t,x)=u^{K}(t,x) \quad \mbox{for all $t \in [0,\tau^K)$ and $x \in D$}.
\end{equation}
To show that $u$ is a global solution of equation \eqref{fSWE2}, we proceed as in of the proof of Theorem \ref{main}. Using \eqref{uK-sol}, we have: for any $t\geq 0$ and $x \in D$, with probability 1,
\begin{align}
\nonumber
1_{\{t<\tau^K\}}u(t,x)=1_{\{t<\tau^K\}}u^K(t,x)& =1_{\{t<\tau^K\}}\int_0^t \int_{D}G_{t-s}(x-y)b\big(u^K(s,y)\big)dyds\\
\nonumber
& \quad +1_{\{t<\tau^K\}}\int_0^t \int_{D} G_{t-s}(x-y) \sigma \big( u^K(s,y)\big) \Lambda^K(ds,dy)\\
\label{AK-BK}
&=:\cA^K +\cB^K \quad \mbox{a.s.}
\end{align}
We treat separately the two terms. For the Lebesgue integral, using \eqref{uuk}, we have:
\begin{align*}
\cA^K &=1_{\{t<\tau^K\}}\int_0^t \int_{D}G_{t-s}(x-y)b\big(u^K(s,y)\big)1_{\{s<\tau^K\}}dyds\\
&= 1_{\{t<\tau^K\}} \int_0^t \int_{D}G_{t-s}(x-y)b\big(u(s,y)\big)1_{\{s<\tau^K\}}dyds\\
&= 1_{\{t<\tau^K\}} \int_0^t \int_{D}G_{t-s}(x-y)b\big(u(s,y)\big)dyds.
\end{align*}
For the stochastic integral, applying Lemma \ref{local-cor2} (for $\Lambda^K$ and $\tau^K$), relations \eqref{LLK} and \eqref{uuk}, and again Lemma \ref{local-cor2} (this time for $\Lambda$ and $\tau^K$), we obtain:
\begin{align*}
\cB^K &=1_{\{t<\tau^K\}}\int_0^t \int_{D} G_{t-s}(x-y) \sigma \big( u^K(s,y)\big) 1_{\{s<\tau^K\}} \Lambda^K(ds,dy)\\
& = 1_{\{t<\tau^K\}}\int_0^t \int_{D} G_{t-s}(x-y) \sigma \big( u^K(s,y)\big) 1_{\{s<\tau^K\}} \Lambda(ds,dy)\\
&=1_{\{t<\tau^K\}}\int_0^t \int_{D} G_{t-s}(x-y) \sigma \big( u(s,y)\big) 1_{\{s<\tau^K\}} \Lambda(ds,dy)\\
&=1_{\{t<\tau^K\}}\int_0^t \int_{D} G_{t-s}(x-y) \sigma \big( u(s,y)\big) \Lambda(ds,dy).
\end{align*}
Letting $K\to \infty$ in \eqref{AK-BK} and using $\lim_{K\to \infty}\tau^K=\infty$ a.s. we obtain that $u$ is a global solution of \eqref{fSWE2}. The fact that $u$ satisfies \eqref{u-cadlag} follows as in the proof of Theorem \ref{main}.

\medskip

For the uniqueness, let $v$ be another solution of \eqref{fSWE2} which satisfies \eqref{u-cadlag}.
Let 
\[ 
\tau_N^v := \inf\{ t > 0 ; \|v(t, \cdot)\|_{H_r(D)} > N \} \quad (\inf \emptyset=\infty).
\]
Then, $\tau_N^v \to \infty$ as $N\to \infty$, and \eqref{bN-b-v} holds.
Using the local property of the stochastic integral, relations \eqref{bN-b-v} and \eqref{1-inside}, followed by relation \eqref{LLK} between $\Lambda$ and $\Lambda^K$, and relation \eqref{int-coincide} between $\Lambda^K$ and $L^K$, it follows that for any stopping time $\tau \leq \tau_N^v \wedge \tau^K$,
\begin{align*}
v(t,x)1_{\{t<\tau\}} & =1_{\{t<\tau\}} \int_0^t \int_{D} G_{t-s}(x,y) b_N\big(v(s,y)  1_{\{s<\tau \}} \big) dyds\\
& \qquad + 1_{\{t<\tau\}}  
\int_0^t \int_{D} G_{t-s}(x,y) \sigma_N\big(v(s,y)  1_{\{s<\tau \}} \big) L^K(ds,dy).
\end{align*}
The same equality is true for the solution $u$ constructed in {\em Step 1}, for any stopping time $\tau \leq \tau^K \wedge \tau_N^K$, because $u(t,x)=u^K(t,x)$ on $\{t<\tau\}$ (since $\tau \leq \tau^K$), and
$u^K(t,x)=\tu_N^K(t,x)$ on $\{t<\tau\}$ (since $\tau \leq \tau_N^K$). (Recall that $\tu_N^K$ is a solution of equation \eqref{fSWE-N} with $L$ replaced by $L^K$.) 

We will use these two equalities for $\tau=\tau_N^v \wedge \tau^K \wedge \tau_N^K=:T_N^K$. Since $b_N$ and $\sigma_N$ are globally Lipschitz, we can now use the same argument as in the \underline{uniqueness} part of Theorem \ref{main} (with $L$ replaced by $L^K$), to conclude that for any $(t,x) \in \mathbb{R}_+ \times D$,
\[
u(t,x) 1_{\{t < T_N^K\}}= v(t,x) 1_{\{t < T_N^K\}}
\quad \text{a.s.}
\]
for each $K,N \in \mathbb{N}$. Letting $N \to \infty$, and using the fact that $\lim_{N \to \infty}T_N^K=\tau^K$, we get:
\[
u(t,x) 1_{\{t < \tau^K\}}= v(t,x) 1_{\{t < \tau^K\}}
\quad \text{a.s.}
\]
for each $K \in \mathbb{N}$. Finally, letting $K \to \infty$, and using the fact that $\lim_{K \to \infty}\tau^K=\infty$, we infer that
$u(t,x) = v(t,x)$ a.s. for all $(t,x) \in \mathbb{R}_+ \times D$. 
%This completes the proof of Theorem~\ref{main2}.
\qed

\bigskip

{\em Acknowledgement.}(i) R.B. acknowledges funding from the Office of Vice-President, Research at University of Ottawa, through the Visiting Researchers Program-Europe, which was used for the visit of L. Q.-S. at University of Ottawa in March 2025, when this project was initiated. 
(ii) The authors would like to thank Eul\`alia Nualart for providing them with an updated version of preprint \cite{FKN24}. (iii) The authors are grateful to Carlo Marinelli for pointing out that in general, relation \eqref{L2-rep} may not hold pointwise, which had led to a mistake in Theorem \ref{tu-th} in an earlier version of the manuscript. 

\appendix

\section{Sobolev spaces}
\label{app-Sob}

In this section, we provide some background material about Sobolev spaces. We recall that $(\lambda_k)_{k\geq 1}$ denote the eigenvalues of $-\Delta$ with Dirichlet boundary conditions, and $(e_k)_{k\geq 1}$ are the corresponding eigenfunctions, which are smooth functions and form a complete orthonormal basis of $L^2(D)$.

%Recall that $(\lambda_k)_{k\geq 1}$ are the eigenvalues of $-\Delta$ with vanishing Dirichlet %boundary conditions, and $(e_k)_{k\geq 1}$ are the corresponding eigenfunctions forming a %complete orthonormal basis of $L^2(D)$. 

For any $f \in L^2(D)$, the following equality holds in $L^2(D)$:
\begin{equation}
\label{L2-rep}
f=\sum_{k\geq 1}\cF_k[f] e_k,
\end{equation}
where $\cF_k[f]:=\langle f, e_k\rangle_{L^2(D)}$ is the $k$-th Fourier coefficient of $f$. Corollary \ref{point-rep} below shows that in some instances, equality \eqref{L2-rep} holds also pointwise.

\medskip

%Note that for any $\gamma>0$,
%\[
%(-\Delta)^{\gamma} f=\sum_{k\geq 1}  \lambda_k^{\gamma} \cF_k[f]e_k  \quad \mbox{for all $f %\in L^2(D)$}.
%\]

Let $r \in \bR$ be arbitrary. 
Let $E_0$ be the set of functions of the form $f=\sum_{k=1}^N a_k e_k$, and define
\[
\|f\|_{H_r(D)}:=\left(\sum_{k=1}^N \lambda_k^r a_k^2\right)^{1/2}.
\]
As in Section 2 of \cite{CD23}, we define the {\em fractional Sobolev space} $H_r(D)$ of order $r$ as the completion of $E_0$ with respect to $\|\cdot\|_{H_r(D)}$. Each element $\Phi$ of $H_r(D)$ can be identified with a series of the form
\[
\Phi=\sum_{k\geq 1}
a_k(\Phi)e_k \quad
\mbox{with} \quad a_k(\Phi) \in \bR \quad \mbox{and} \quad  \|\Phi \|_{H_r(D)} := \left(\sum_{k \geq 1}\lambda_k^r |a_k(\Phi)|^2\right)^{1/2}<\infty.
\]
$H_r(D)$ is a Hilbert space with the inner product $\langle \cdot, \cdot \rangle_{H_r(D)}$ given by
\[
\langle \Phi_1, \Phi_2\rangle_{H_r(D)} := \sum_{k=1}^{\infty} \lambda_k^r a_k(\Phi_1)a_k(\Phi_2), \quad \Phi_1, \Phi_2\in H_r(D).
 \]
Moreover, $H_s(D) \subseteq H_r(D)$ for $s \leq r$ and $H_0=L^2(D)$. The evaluation
\[
\langle \Phi_1, \Phi_2 \rangle := \sum_{k =1}^{\infty} a_k (\Phi_1) a_k ( \Phi_2 ), \quad \Phi_1 \in H_{-r}(D), \, \Phi_2 \in H_r(D),
\]
puts $H_r(D)$ and $H_{-r}(D)$ in duality. If $f \in H_r(D)$, then  
\begin{center}
$a_k(f)=\cF_k[f]$ if $r\geq 0$, and $a_k(f)=\langle f,e_k \rangle$ if $r<0$.
\end{center}
When $r<0$, $\langle f,e_k \rangle$ denotes the action of the distribution $f$ on the test function $e_k$.

The spectral power of $-\Delta$ of order $\gamma >0$ satisfies:
\[
(-\Delta)^{\gamma}:\bigcup_{r \in \bR}H_r(D) \to \bigcup_{r\in \bR}H_r(D), \quad (-\Delta)^{\gamma}\Phi:=\sum_{k\geq 1} \lambda_k^{\gamma}a_k(\Phi) e_k.
\]

For normed vector spaces $X$ and $Y$, we write $X \hookrightarrow Y$ if $X$ is continuously embedded in $Y$, i.e. $X \subset Y$ and there exists $C>0$ such that $\|x\|_{Y} \leq C \|x\|_{X}$ for all $x \in X$.

\medskip

The following result plays a crucial role in the present article.

\begin{theorem}
\label{embed-th}
If $D$ is bounded and $\partial D$ is of class $C^{\infty}$, then
%(i.e. $\partial D$ is a smooth $(d-1)$-dimensional manifold).
%Let $D$ be a bounded domain in $\bR^d$.
\[
H_r(D) \hookrightarrow L^{\infty}(D) \quad \mbox{for any} \ r>\frac{d}{2},
\]
i.e. $H_r(D) \subset L^{\infty}(D)$ and there exists a constant $\cC_{\infty}>0$ such that
\begin{equation}
\label{embed-eq}
\|h\|_{L^{\infty}(D)} \leq \cC_{\infty} \|h\|_{H_r(D)}, \quad \mbox{for all} \ h \in H_r(D).
\end{equation}
\end{theorem}

\begin{proof} 
From the last part of the proof of Lemma 2.18 of \cite{CDH19}, we know that 
\[
H_r(D) \hookrightarrow H^r(D) \quad \mbox{for any $r \geq 0$},
\] where $H^r(D)$ is defined by (9.1), Chapter 1 of \cite{lion-magenes68}.
By Theorem 9.8, Chapter 1 of \cite{lion-magenes68},  
\[
H^r(D)  \hookrightarrow C_0(\overline{D})  \quad \mbox{for any $r >\frac{d}{2}$},
\]
provided that $D$ is bounded and $\partial D$ is of class $C^{\infty}$, where $C_0(\overline{D})$ is the space of continuous functions on $\overline{D}$ equipped with the $\|\cdot\|_{L^{\infty}(D)}$-norm. 
\end{proof}

\medskip

\begin{corollary}
\label{point-rep}
Assume that $D$ is bounded and $\partial D$ is of class $C^{\infty}$. If $f \in H_r(D)$ for some $r>d/2$, then 
\[
f(x)=\sum_{k\geq 1}\cF_k[f] e_k(x) \quad \mbox{for all $x \in D$}.
\]
\end{corollary}

\begin{proof} We define $h_n(x)=f(x)-\sum_{k=1}^n \cF_k[f]e_k(x)$. Then
\[
\|h_n\|_{H_r(D)}^2=\sum_{k\geq n+1}\lambda_k^{r} \big(\cF_k[f]\big)^2 \to 0 \quad \mbox{as $n \to \infty$}.
\]
By Theorem \ref{embed-th}, $\|h_n\|_{L^{\infty}(D)} \to 0$ as $n \to \infty$. Hence, $\lim_{n \to \infty}h_n(x)=0$ for all $x \in D$.
\end{proof}

\section{Auxiliary Results}

In this appendix section, we give some elementary results which were used in the paper.

\medskip

%\textcolor{orange}{Raluca, Juan: I removed the lemma of the right-continuity of the filtration, and I added a comment at the point when we define it.}

%\begin{lemma}
%\label{rc-lem}
%The filtration $(\cF_t)_{t\geq 0}$ induced by $J$ (given by \eqref{def-filtr}) is right-continuous.
%\end{lemma}

\begin{lemma}
\label{fact1-lem}
For any random variable $X$ and set $A \in \cF$,
\begin{equation}
\label{fact1}
\mbox{$1_A X=0$  a.s.} \quad \mbox{if and only if} \quad  \mbox{$X=0$ a.s. on $A$}.
\end{equation}
Here ``$X=0$ a.s. on $A$'' means that $\bP\big(\{X=0\} \cap A\big)=\bP(A)$.
\end{lemma}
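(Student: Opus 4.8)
The plan is elementary: express the event $\{1_A X = 0\}$ in terms of $A$ and $\{X=0\}$, and then read off both implications from a single probability identity. First I would observe that pointwise $1_A(\omega)X(\omega)=0$ holds for every $\omega\in A^c$, while for $\omega\in A$ one has $1_A(\omega)X(\omega)=X(\omega)$, so the product vanishes there exactly when $X(\omega)=0$. This yields the disjoint decomposition
\[
\{1_A X=0\}=A^c\cup\big(\{X=0\}\cap A\big),
\]
the union being disjoint since the first set is contained in $A^c$ and the second in $A$. All sets appearing here belong to $\cF$: indeed $A\in\cF$ by hypothesis, $\{X=0\}\in\cF$ since $X$ is a random variable, and hence $1_A X$ is a random variable and $\{1_A X=0\}\in\cF$.

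Next I would take probabilities, using finite additivity on the disjoint union:
\[
\bP(1_A X=0)=\bP(A^c)+\bP\big(\{X=0\}\cap A\big)=1-\bP(A)+\bP\big(\{X=0\}\cap A\big).
\]
From this identity it is immediate that $\bP(1_A X=0)=1$ holds if and only if $\bP\big(\{X=0\}\cap A\big)=\bP(A)$, which is precisely the assertion that $X=0$ a.s.\ on $A$; thus both directions of the equivalence are obtained simultaneously.

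I do not anticipate any genuine obstacle: once the set-theoretic decomposition of $\{1_A X=0\}$ is written down, the argument is purely a matter of finite additivity, and the only point to keep track of is the measurability of the sets involved, which is automatic from the hypotheses. The statement is recorded here only because it is invoked repeatedly in the main text — for instance in passing from \eqref{def-local-sol} to the definition of a local solution — to convert identities of the form ``$1_{\{t<\tau\}}(\,\cdot\,)=0$ a.s.'' into the corresponding almost-sure statements on the event $\{t<\tau\}$.
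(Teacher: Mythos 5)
Your proof is correct and follows essentially the same route as the paper: the same disjoint decomposition $\{1_A X=0\}=A^c\cup(\{X=0\}\cap A)$, followed by additivity to read off both implications at once. No issues.
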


\begin{proof}
$1_A X=0$ a.s. means that $\bP(\Omega_0)=1$, where $\Omega_0=\{1_A X=0\}=\big(\{X=0\} \cap A\big)\cup A^c$. Since 
\[
\bP(\Omega_0)=\bP\big(\{X=0\} \cap A\big)+\bP(A^c),
\]
the fact that $\bP(\Omega_0)=1$ is equivalent to $\bP\big( \{X=0\} \cap A\big)=1-\bP(A^c)=\bP(A)$. 
\end{proof}

%The following result is needed for Lemma \ref{stop}.

\begin{lemma}
\label{stop}
Let $B \subset \bR$ be an open set, and $(X_t)_{t\geq 0}$ be a real-valued right-continuous process, which is adapted with respect to a right-continuous filtration $(\cF_t)_{t\geq 0}$. Let
$$\tau := \inf \{ t \geq 0 ; X(t) \in B \}.$$
Then $\tau$ is a stopping time with respect to $(\cF_t)_{t\geq 0}$.
\end{lemma}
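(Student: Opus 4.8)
The plan is to establish first that $\{\tau < t\} \in \cF_t$ for every $t > 0$, and then to upgrade this to $\{\tau \le t\} \in \cF_t$ by invoking the right-continuity of the filtration. By definition of the infimum, $\{\tau < t\} = \bigcup_{s \in [0,t)}\{X(s) \in B\}$, which is an uncountable union and hence not obviously measurable; the core of the argument is to reduce it to a countable union over rationals, namely
\[
\{\tau < t\} \;=\; \bigcup_{q \in \bQ \cap [0,t)}\{X(q) \in B\}.
\]
The inclusion ``$\supseteq$'' is immediate. For ``$\subseteq$'', I would fix $\omega$ with $X(s)(\omega) \in B$ for some $s \in [0,t)$; since $B$ is open there is $\varepsilon > 0$ with $(X(s)(\omega) - \varepsilon,\, X(s)(\omega) + \varepsilon) \subset B$, and since $u \mapsto X(u)(\omega)$ is right-continuous at $s$ there is $\delta > 0$, which may be chosen small enough that $s + \delta < t$, such that $|X(u)(\omega) - X(s)(\omega)| < \varepsilon$ for all $u \in [s, s+\delta)$. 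Picking any rational $q \in [s, s+\delta)$ then yields $q \in \bQ \cap [0,t)$ with $X(q)(\omega) \in B$.

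Since $X$ is adapted and $B$ is Borel, each event $\{X(q) \in B\}$ lies in $\cF_q \subseteq \cF_t$ for $q < t$, and being a countable union, $\{\tau < t\} \in \cF_t$ for every $t > 0$. To finish, I would write $\{\tau \le t\} = \bigcap_{n \ge 1}\{\tau < t + 1/n\}$. For each fixed $m \ge 1$, the events $\{\tau < t + 1/n\}$ are non-increasing in $n$, so this intersection equals $\bigcap_{n \ge m}\{\tau < t + 1/n\}$, and each term with $n \ge m$ lies in $\cF_{t+1/n} \subseteq \cF_{t+1/m}$; hence $\{\tau \le t\} \in \cF_{t+1/m}$ for every $m$, and therefore $\{\tau \le t\} \in \bigcap_{m \ge 1} \cF_{t+1/m} = \cF_{t+} = \cF_t$.

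I do not expect a genuine obstacle here. The only delicate point is the passage from the uncountable union to the countable one, which uses crucially that $B$ is open and $X$ is right-continuous; the final step uses the right-continuity of $(\cF_t)_{t\ge0}$ in an essential way, since without it $\tau$ would in general only satisfy $\{\tau < t\} \in \cF_t$ (a ``weak'' stopping time) rather than $\{\tau \le t\} \in \cF_t$.
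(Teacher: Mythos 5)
Your proof is correct and follows essentially the same route as the paper's: both arguments first reduce $\{\tau<t\}$ to the countable union $\bigcup_{q\in\bQ\cap[0,t)}\{X(q)\in B\}$ using the openness of $B$ together with the right-continuity of $X$, and then upgrade $\{\tau<t\}\in\cF_t$ to $\{\tau\le t\}\in\cF_t$ via the right-continuity of the filtration (the paper intersects $\{\tau<u\}$ over rational $u\in(t,r)$ for arbitrary $r>t$, while you intersect $\{\tau<t+1/n\}$ over $n$ — the same device). The only cosmetic difference is that you spell out the $\varepsilon$--$\delta$ step that the paper compresses into ``$X(t_0)=\lim_{s\downarrow t_0}X(s)$, so there exists $s\in[t_0,t)\cap\bQ$ with $X(s)\in B$.''
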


\begin{proof} This argument is given in the proof of Theorem 3 (Chapter I) of \cite{protter}, where it is stated that $X$ is c\`adl\`ag. We include the proof which shows that the fact that $X$ has left-limits is not needed.
Since $(\cF_t)_{t\geq 0}$ is right-continuous, it suffices to prove that 
\[
\{\tau<t\}\in \cF_t \quad \mbox{for all $t> 0$}.
\] 
To see this, let $r>t$ be arbitrary. For any $u \in (t,r)$, $\{\tau<u\} \in \cF_u \subset \cF_r$. Hence, $\{\tau \leq t \} =\cap_{u \in (t,r) \cap \bQ} \{\tau<u\} \in \cF_r$. Since $r>t$ was arbitrary, $\{\tau \leq t\}\in \cap _{r>t}\cF_r=\cF_{t+}=\cF_t$.

We claim that
\[
\{\tau<t\}=\bigcap_{s \in \bQ \cap [0,t)}\{X(s) \in B\}.
\]
To see this, let $S=\{ t > 0 : X(t) \in B \}$. If $\tau<t$, there exists $t_0\in S$ such that $t_0<t$. Hence $X(t_0)\in B$. But $X(t_0)=\lim_{s\downarrow t_0,s \in \bQ}X(s)$, since $X$ is right-continuous. So, there exists $s \in [t_0,t) \cap \bQ$ such that $X(s) \in B$. For the reverse inclusion, if $X(s) \in B$ for some $s \in \bQ \cap [0,t)$, then $s \in S$. Since $\tau=\inf S$, we must have $\tau \leq s$. So, $\tau<t$.
\end{proof}

\begin{lemma}
\label{cont-g-lem}
Let $g:[0,\infty) \to [0,\infty)$ be a right-continuous function and $a>0$. Define
$$\tau=\inf\{t>0; g(t)>a\}, \quad \mbox{with} \ \inf \emptyset=\infty.$$ %If $g(0)<a$, then $\tau>0$. 
a) If $\tau<\infty$, then $g(\tau)\geq a$. 
b) If $\tau \leq T$, then $\sup_{t \leq T} g(t) \geq a$.
\end{lemma}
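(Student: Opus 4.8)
The plan is to derive part (a) directly from the definition of the infimum together with the right-continuity of $g$, and then obtain part (b) as an immediate consequence.

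For part (a), assume $\tau<\infty$ and set $S=\{t>0:g(t)>a\}$, so that $S\neq\emptyset$ and $\tau=\inf S$. By the definition of the infimum there is a sequence $(t_n)_{n\ge 1}\subseteq S$ with $t_n\to\tau$, and since $\tau=\inf S$ we have $t_n\ge\tau$ for every $n$. If $t_n=\tau$ for some $n$, then $g(\tau)=g(t_n)>a\ge a$ and we are done; otherwise $t_n>\tau$ for all $n$, so after passing to a subsequence we may assume $t_n\downarrow\tau$, and the right-continuity of $g$ at $\tau$ yields $g(\tau)=\lim_{n\to\infty}g(t_n)$. Since $g(t_n)>a$ for every $n$, we conclude $g(\tau)\ge a$. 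Note that passing to the limit only produces the non-strict inequality, which is exactly what is asserted and cannot in general be strengthened to $g(\tau)>a$.

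For part (b), suppose $\tau\le T$. Since $T<\infty$, also $\tau<\infty$, so part (a) applies and gives $g(\tau)\ge a$; as $\tau\in[0,T]$, it follows that $\sup_{t\le T}g(t)\ge g(\tau)\ge a$.

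There is essentially no obstacle here. The only points that require a little care are: (i) the set $S$ may be empty, in which case $\tau=\infty$, so statement (a) is vacuous and (b) has no active hypothesis; and (ii) one must approach $\tau$ from the right to invoke right-continuity, which is automatic since every point of $S$ lies to the right of $\tau=\inf S$, so the approximating sequence $(t_n)$ satisfies $t_n\ge\tau$.
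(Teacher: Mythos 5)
Your proof is correct and follows essentially the same approach as the paper: for part (a) both arguments pick a sequence in $\{t>0:g(t)>a\}$ approaching $\tau$ from the right and pass to the limit using right-continuity. For part (b) your derivation is in fact a little cleaner than the paper's, which runs a case analysis on whether $g(\tau)>a$ or $g(\tau)=a$ instead of simply observing, as you do, that $\tau\in[0,T]$ and $g(\tau)\ge a$ already force $\sup_{t\le T}g(t)\ge a$.
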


\begin{proof}
a) Let $A=\{t>0; g(t)>a\}$. By the definition of the infimum, for any $n\geq 1$, there exists $t_{n} \in A$ such that $\tau \leq t_{n}<\tau+\frac{1}{n}$. Hence $g(t_n)>a$ for all $n$, and $t_n \to \tau$. Since $g$ is right-continuous, $g(t_n) \to g(\tau)$. Hence, $g(\tau) \geq g(a)$. 
%So, if $g(0)<a$, then $\tau>0$.
%On the other hand, $g(s)\leq a$ for all $s\in [0,\tau)$. Choose an arbitrary sequence %$(s_n)_n\subset [0,\tau)$ such that $s_n \to \tau$. Then $g(s_n) \leq a$ for all $n$. Since %$g$ is left continuous, $g(s_n) \to g(\tau)$, and hence $g(\tau)\leq a$.

b) If $g(\tau)> a$, then $\sup_{t \leq T}g(t) \geq g(\tau)>a$. If $g(\tau)=a$, we have two cases:\\
(i) If there exists $t^* \in (\tau,T]$ such that $g(t^*)>a$, then
$\sup_{t \leq T} g(t) \geq \sup_{t \in (\tau, T]} g(t)>a$.\\
(ii) Otherwise, $g(t)\leq a$ for all $t \in (\tau,T]$. Since 
$g(t)\leq a$ for all $t \in [0,\tau)$ (by the definition of $\tau$), it follows that 
$\sup_{t \leq T} g(t) =g(\tau)=a$.
\end{proof}

%\textcolor{brown}{RB: I moved this lemma here.}

\begin{lemma}
\label{lim-tau-lem},
Let $(\tau_N)_{N\geq 1}$ be a sequence of stopping times such that $\tau_N \leq \tau_{N+1}$ a.s. for all $N \geq 1$. If 
 $\lim_{N \to \infty}\bP(\tau_N \leq T)=0$ for any $T>0$, then
$\lim_{N \to \infty}\tau_N=\infty$ a.s.
\end{lemma}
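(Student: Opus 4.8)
The plan is to pass to the monotone limit and then use countable subadditivity. First I would note that the hypothesis $\tau_N\leq\tau_{N+1}$ a.s.\ holds simultaneously for all $N\geq 1$ outside a single $\bP$-null set (a countable union of null sets is null); on the complement of that set, the non-decreasing sequence $(\tau_N)_{N\geq 1}$ of $[0,\infty]$-valued maps has a pointwise limit, so
\[
\tau_{\infty}:=\lim_{N\to\infty}\tau_N=\sup_{N\geq 1}\tau_N
\]
is a well-defined $[0,\infty]$-valued random variable (measurable as an a.s.\ pointwise limit of measurable maps). It suffices to show $\bP(\tau_\infty<\infty)=0$.

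Next, fix $T>0$. Since $\tau_N\leq\tau_\infty$ a.s., the inclusion $\{\tau_\infty\leq T\}\subseteq\{\tau_N\leq T\}$ holds up to a null set, hence
\[
\bP(\tau_\infty\leq T)\leq\bP(\tau_N\leq T)\qquad\text{for every }N\geq 1.
\]
Letting $N\to\infty$ and using the assumption $\lim_{N\to\infty}\bP(\tau_N\leq T)=0$ gives $\bP(\tau_\infty\leq T)=0$. Finally, writing $\{\tau_\infty<\infty\}=\bigcup_{m\geq 1}\{\tau_\infty\leq m\}$ and applying countable subadditivity yields
\[
\bP(\tau_\infty<\infty)\leq\sum_{m\geq 1}\bP(\tau_\infty\leq m)=0,
\]
so $\tau_\infty=\infty$ a.s., which is the claim.

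I do not expect any genuine obstacle here: the argument is pure measure theory. The only point requiring a little care is the bookkeeping of null sets — ensuring the a.s.\ monotonicity is exploited on a single exceptional set so that $\tau_\infty$ is honestly defined, and that the comparison $\{\tau_\infty\leq T\}\subseteq\{\tau_N\leq T\}$ is used only up to negligible sets. No right-continuity of the filtration, adaptedness, or specific structure of the $\tau_N$ as hitting times is needed; the statement holds for an arbitrary a.s.\ non-decreasing sequence of $[0,\infty]$-valued random variables satisfying the tail-probability hypothesis.
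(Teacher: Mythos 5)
Your proof is correct and follows essentially the same route as the paper's: both reduce the claim to showing $\bP(\sup_N \tau_N \le T)=0$ for each fixed $T$ via the a.s.\ monotonicity, and then take a countable union over $T$ (you over integers $m$, the paper over $\bQ_+$). Your formulation via the limit random variable $\tau_\infty$ and the inclusion $\{\tau_\infty\le T\}\subseteq\{\tau_N\le T\}$ is just a slightly more streamlined packaging of the paper's argument with the decreasing events $A_N^{(T)}$.
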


\begin{proof}
Let $\Omega_0=\{\tau_{N}\leq \tau_{N+1}\ \mbox{for all} \ N\geq 1\}$. Then $\bP(\Omega_0)=1$.
Let $A_N^{(T)}=\{\tau_N \leq T\}\cap \Omega_0$. Then $A_{N+1}^{(T)} \subset A_N^{(T)}$ for any $N \geq 1$, and hence, 
\[
\bP\big(\bigcap_{N\geq 1}A_N^{(T)}\big)=\lim_{N \to \infty}\bP( A_N^{(T)})=0 \quad \mbox{for any} \ T>0.
\]
This shows that $\bP\big(\{\tau_N \leq T \ \mbox{for all} \ N \geq 1\} \cap \Omega_0\big)=0$ for any $T>0$. Then $\bP(A)=0$, where
\[
A=\Omega_0 \cap \bigcup_{T \in \bQ_{+}} \{ \tau_N \leq T \ \mbox{for all} \ N \geq 1 \}. 
\]
Finally, the conclusion follows, once we observe that 
\[
A^c=\Omega_0^c \cup \bigcap_{T \in \bQ_{+}} \{ \tau_N > T \ \mbox{for some} \ N \geq 1 \}= \Omega_0^c \cup \{\lim_{N \to \infty} \tau_N=\infty\}.
\]
%Hence, $1=\bP(A^c) \leq \bP(\lim_{N \to \infty} \tau_N=\infty)$
\end{proof}

\section{Local property of the stochastic integral}
\label{app-local}

In this section, we include some local property of the stochastic integrals with respect to $L$ and $\Lambda$. Lemma \ref{local-cor} was used in the proofs of Lemmas \ref{lem-uN-u}, \ref{strong-uNN} and \ref{local-sol}, while Lemma \ref{local-cor2} was used in the proof of Lemma \ref{uu-lem} and in the proof of Theorem \ref{main2}. 
For these results, we refer to relation (2.22) of \cite{BJ83}.

%\begin{theorem}%[Local Property]
%\label{local}
%Let $\{X(t,x);t\geq 0,x\in D\}$ be a jointly measurable and adapted process such that
%\begin{equation}
%\label{H2-int}
%\bE \int_0^t \int_{D}|X(s,x)|^2 dsds<\infty.
%\end{equation}

%Let $A \in \cF_t$ be such that
%\begin{equation}
%\label{cond-X}
%X(\omega,s,x)=0 \quad \mbox{for all} \quad \omega \in A,s \in [0,t],x\in D.
%\end{equation}
%Then
%\[
%\int_0^t \int_{D}X(s,x)L(ds,dx)=0 \quad \mbox{a.s. on $A$}.
%\]
%\end{theorem}

%\begin{proof}
%\textcolor{red}{We need to find a reference. Lemma 2.2.8 of \cite{DS24} considers the Gaussian %case.}
%\end{proof}

\begin{lemma}
\label{local-cor}
Let $L$ be the L\'evy white noise given by \eqref{def-L1}, and $H=\{H(t,x);t\geq 0,x\in \bR^d\}$ be a predictable process such that
\[
\bE \int_0^T \int_{\bR^d}|H(t,x)|^2 dtdx<\infty \quad \mbox{for all $T>0$}.
\]
Then for any stopping time $\tau$ and for any $t>0$,
\[
1_{[0,\tau)}(t)\int_0^t \int_{\bR^d} H(s,x)L(ds,dx)=1_{[0,\tau)}(t)\int_0^t \int_{\bR^d}H(s,x)1_{[0,\tau)}(s)L(ds,dx).
\]
\end{lemma}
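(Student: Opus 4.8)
The plan is to rewrite the claimed equality as the statement that a single stochastic integral vanishes on the event $\{t<\tau\}$, and then to deduce this from the stopped-integral identity (relation (2.22) of \cite{BJ83}) together with the It\^o isometry for $L$.

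First I would note that $s\mapsto 1_{[0,\tau)}(s)=1_{\{s<\tau\}}$ is left-continuous and adapted, hence predictable; therefore $H^{\tau}:=H\,1_{[0,\tau)}$ and $\Phi:=H-H^{\tau}=H\,1_{[\tau,\infty)}$ are predictable, and since $|H^{\tau}|\le|H|$ and $|\Phi|\le|H|$, both are It\^o integrable with respect to $L$. Writing $N_t=\int_0^t\int_{\bR^d}H\,dL$ and $N^{\tau}_t=\int_0^t\int_{\bR^d}H^{\tau}\,dL$ for their c\`adl\`ag versions, the claimed identity reads $1_{\{t<\tau\}}N_t=1_{\{t<\tau\}}N^{\tau}_t$, i.e.\ $1_{\{t<\tau\}}D_t=0$ a.s., where $D_t:=N_t-N^{\tau}_t=\int_0^t\int_{\bR^d}\Phi\,dL$.

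By the stopped-integral identity (relation (2.22) of \cite{BJ83}; equivalently, optional stopping applied to the square-integrable martingale $D$), for each fixed $t$ one has, almost surely,
\[
D_{t\wedge\tau}=\int_0^t\int_{\bR^d}\Phi(s,x)\,1_{[0,\tau]}(s)\,L(ds,dx)=\int_0^t\int_{\bR^d}H(s,x)\,1_{[\tau,\infty)}(s)\,1_{[0,\tau]}(s)\,L(ds,dx).
\]
For every $\omega$ the integrand equals $H(s,x)\,1_{\{s=\tau(\omega)\}}$, which vanishes for Lebesgue-a.e.\ $s$; hence, by the It\^o isometry, $\bE[D_{t\wedge\tau}^{2}]=m_2\,\bE\!\int_0^t\!\int_{\bR^d}|H(s,x)|^2\,1_{\{s=\tau\}}\,dx\,ds=0$, so $D_{t\wedge\tau}=0$ a.s. Since $t\wedge\tau=t$ on $\{t<\tau\}$, this gives $1_{\{t<\tau\}}D_t=1_{\{t<\tau\}}D_{t\wedge\tau}=0$ a.s., which is the assertion.

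The routine verifications are the predictability of $1_{[0,\tau)}$ and the $L^2$-integrability of $H^{\tau}$ and $\Phi$. The one point deserving care --- and where I would spend most effort --- is the invocation of the stopped-integral identity: one must check that the form of relation (2.22) of \cite{BJ83} being used applies to the integrator $L$ (which it does, since $L(B)=\int_{B\times\bR_0}z\,\widetilde{J}(dt,dx,dz)$ falls within the Bichteler--Jacod random-measure framework), and, in passing from $1_{[0,\tau)}$ to $1_{[0,\tau]}$ inside the integral, that the discrepancy sits on the set $\{s=\tau\}$ of zero $ds$-measure and is therefore irrelevant for an integrand integrated against $ds\,dx$. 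A fully self-contained alternative is to apply optional sampling on $[0,t]$, at the bounded stopping time $t\wedge\tau$, to the martingale $D^2_s-m_2\int_0^s\int_{\bR^d}|\Phi(r,x)|^2\,dx\,dr$, which directly yields $\bE[D_{t\wedge\tau}^2]=m_2\,\bE\!\int_0^{t\wedge\tau}\!\int_{\bR^d}|H(r,x)|^2\,1_{\{r\ge\tau\}}\,dx\,dr=0$.
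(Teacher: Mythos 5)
The paper offers no proof of this lemma at all---it simply points to relation (2.22) of \cite{BJ83}---so your argument is a genuine, self-contained expansion rather than a reproduction. Its core is correct and is the standard route: write the difference of the two sides as $1_{\{t<\tau\}}D_t$ with $D_t=\int_0^t\int\Phi\,dL$, $\Phi=H\,1_{[\tau,\infty)}$, use the stopped-integral identity to get $D_{t\wedge\tau}=\int_0^t\int\Phi\,1_{[0,\tau]}\,dL$, observe that $\Phi\,1_{[0,\tau]}=H\,1_{\{s=\tau\}}$ vanishes $ds\,dx\,d\bP$-a.e., and conclude by the isometry; the alternative via optional sampling of $D_s^2-m_2\int_0^s\int|\Phi|^2$ is also sound.

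One assertion, however, is false as written and should be repaired: $s\mapsto 1_{[0,\tau)}(s)=1_{\{s<\tau\}}$ is \emph{not} left-continuous (at $s=\tau$ its value is $0$ while its left limit is $1$); it is right-continuous, hence optional, and the stochastic interval $[0,\tau)$ is predictable only when $\tau$ is a predictable time---not for a general stopping time such as a jump time. The left-continuous, adapted (hence predictable) indicator is $1_{[0,\tau]}=1_{\{s\le\tau\}}$. The fix is harmless but should be stated: $1_{[0,\tau)}$ and $1_{[0,\tau]}$ differ only on the graph $[\tau]=\{(s,\omega):s=\tau(\omega)\}$, which is a $ds\,dx\,d\bP$-null set, so $H\,1_{[0,\tau)}$ and $\Phi=H\,1_{[\tau,\infty)}$ agree a.e.\ with the predictable processes $H\,1_{[0,\tau]}$ and $H\,1_{(\tau,\infty)}$ respectively; since the It\^o integral only depends on the $L^2(\Omega\times\bR_+\times\bR^d)$ equivalence class of the integrand, all the integrals you manipulate are well defined and your subsequent computations are unaffected. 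With this correction the proof is complete.
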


%\begin{proof}
%We have to show that
%\[
%1_{[0,\tau)}(t)\int_0^t \int_{D}H(s,x)1_{\{s\geq \tau\}}L(ds,dx)=0 \quad \mbox{a.s.},
%\]
%which according to \eqref{fact1} is equivalent to
%\[
%\int_0^t \int_{D}H(s,x)1_{\{s\geq \tau\}}L(ds,dx)=0 \quad \mbox{a.s.} \quad \mbox{on} \ %\{t<\tau\}.
%\]
%This follows by applying Theorem \ref{local} to $X(s,x)=H(s,x)1_{\{s\geq \tau\}}$ and %$A=\{t<\tau\}$. Condition \eqref{cond-X} holds: if $s\leq t$ and $t<\tau(\omega)$ then %$s<\tau(\omega)$ and $$X(\omega,s,x)=H(\omega,s,x)1_{\{s \geq \tau(\omega)\}}=0.$$
%\end{proof}

\begin{lemma}
\label{local-cor2}
Let $\Lambda$ be an arbitrary L\'evy basis. Let $H=\{H(t,x);t\geq 0,x\in \bR^d\}$ be a predictable process for which there exists a sequence $(T_n)_{n\geq 1}$ of stopping times with $T_n \leq T_{n+1}$ and $\lim_{n \to \infty}T_n=\infty$, such that for any $n\geq 1$,
\[
\sup_{t\in [0,T]}\sup_{x \in D} \bE[H^2(t,x)1_{\{t\leq T_n\}}]<\infty \quad \mbox{for all $T>0$}.
\]
Then $H$ is integrable with respect to $\Lambda$, and for any stopping time $\tau$ and for any $t>0$,
\[
1_{[0,\tau)}(t)\int_0^t \int_{\bR^d} H(s,x)\Lambda(ds,dx)=1_{[0,\tau)}(t)\int_0^t \int_{\bR^d}H(s,x)1_{[0,\tau)}(s)\Lambda(ds,dx).
\]
\end{lemma}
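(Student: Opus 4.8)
The plan is to prove the lemma in two steps: first that $H$ is integrable with respect to $\Lambda$ in the sense of Definition \ref{def-integrable}, and then that the displayed identity is the restriction (locality) property of the stochastic integral with respect to a random measure, which is relation (2.22) of \cite{BJ83}.

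For the integrability I would split $\Lambda=\Lambda^{(1)}+\Lambda^{(2)}$ into the compensated small-jump part
\[
\Lambda^{(1)}(A)=\int_0^\infty\int_{\bR^d}\int_{\{|z|\le1\}}1_A(t,x)\,z\,\widetilde{J}(dt,dx,dz)
\]
and the large-jump part $\Lambda^{(2)}(A)=\int_0^\infty\int_{\bR^d}\int_{\{|z|>1\}}1_A(t,x)\,z\,J(dt,dx,dz)$. Since $\int_{\{|z|\le1\}}z^2\,\nu(dz)<\infty$, the piece $\Lambda^{(1)}$ is a finite-variance $L^2$-martingale measure of the type \eqref{def-L1}, and this is exactly where the localizing sequence enters: the stopped process $H(t,x)1_{\{t\le T_n\}}$ is predictable — because the process $t\mapsto1_{\{t\le T_n\}}$ is left-continuous and adapted — and by hypothesis $\sup_{t\in[0,T]}\sup_{x\in D}\bE[(H(t,x)1_{\{t\le T_n\}})^2]<\infty$ for all $T>0$, so its It\^o integral against $\Lambda^{(1)}$ is defined, whence $H1_{\{t\le T_n\}}\in L^0(\Lambda^{(1)})$ by the identification of the two integrals (as in \eqref{int-coincide}); letting $n\to\infty$ along $T_n\uparrow\infty$ and patching gives $H\in L^0(\Lambda^{(1)})$. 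For the second piece, $\nu(\{|z|>1\})<\infty$, so $J$ restricted to $\bR_+\times\bR^d\times\{|z|>1\}$ has a.s.\ finitely many atoms on each bounded time--space window; hence $\int_0^t\int_{\bR^d}H(s,x)\,\Lambda^{(2)}(ds,dx)$ is an a.s.\ finite sum over those atoms, trivially well defined for the predictable $H$, so $H\in L^0(\Lambda^{(2)})$. Adding the two contributions yields $H\in L^0(\Lambda)$.

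With $H\in L^0(\Lambda)$ established, the identity
\[
1_{[0,\tau)}(t)\int_0^t\int_{\bR^d}H(s,x)\,\Lambda(ds,dx)=1_{[0,\tau)}(t)\int_0^t\int_{\bR^d}H(s,x)1_{[0,\tau)}(s)\,\Lambda(ds,dx)
\]
is precisely relation (2.22) of \cite{BJ83}, which asserts that if $H$ is integrable with respect to a random measure $\Lambda$ and $\tau$ is a stopping time, then $H1_{[0,\tau)}$ is integrable with respect to $\Lambda$ and the process $\int_0^\cdot\int H1_{[0,\tau)}\,d\Lambda$ coincides with $\int_0^\cdot\int H\,d\Lambda$ up to (but not including) time $\tau$; multiplying both sides by $1_{[0,\tau)}(t)$ gives the stated equality. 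In the finite-variance case (in particular for $\Lambda=\Lambda^{(1)}$, and more generally when $\Lambda$ is a L\'evy white noise as in \eqref{def-L1}) this is also Lemma \ref{local-cor}. A self-contained derivation would check the identity first for simple integrands $S\in\cS$ using the finite additivity of $\Lambda$ on $\cP_b$, and then pass to general $H$ along a sequence $(S_m)\subset\cS$ with $\|S_m-H\|_{\Lambda}\to0$, invoking the monotonicity of the Daniell mean and the $\|\cdot\|_{\Lambda}$-to-$L^0(\Omega)$ continuity of $I^{\Lambda}$.

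The main obstacle is the integrability claim for a L\'evy basis $\Lambda$ of possibly infinite variance. Its resolution is two-fold: the small-jump part is always of finite variance because $\nu$ integrates $z^2$ near the origin, so the classical It\^o/$L^2$ theory applies after localizing by $(T_n)$ with $T_n\uparrow\infty$ — which is the sole purpose of that hypothesis — while the large-jump part, carrying only finitely many atoms on bounded windows since $\nu(\{|z|>1\})<\infty$, is handled pathwise. Everything else — the patching over the increasing stochastic intervals, keeping the manipulated sets within $\cP_b$, and the limiting argument from simple integrands underlying relation (2.22) of \cite{BJ83} — is routine.
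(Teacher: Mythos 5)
The paper offers no proof of this lemma at all: both Lemma \ref{local-cor} and Lemma \ref{local-cor2} are stated in Appendix \ref{app-local} with only the pointer ``we refer to relation (2.22) of \cite{BJ83}.'' Your proposal is therefore strictly more informative than the paper, and its architecture is sound: the small-jump/large-jump splitting of \eqref{def-L2}, the observation that the compensated part is an $L^2$ martingale measure so that the stopped integrands $H1_{[0,T_n]}$ are It\^o-integrable, the pathwise treatment of the finitely many large jumps, and the derivation of the locality identity first for $S\in\cS$ (where it is just finite additivity of $\Lambda$ on $\cP_b$, since $[0,t]\subset[0,\tau)$ on $\{t<\tau\}$) and then by Daniell-mean approximation using $\|S_m 1_{[0,\tau)}-H1_{[0,\tau)}\|_{\Lambda}\le\|S_m-H\|_{\Lambda}$. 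The one place where your phrase ``letting $n\to\infty$ and patching'' hides real content is the passage from ``$H1_{[0,T_n]}\in L^0(\Lambda^{(1)})$ for every $n$'' to ``$H\in L^0(\Lambda^{(1)})$'': this needs $\|H1_{(\!(T_n,t]\!]}\|_{\Lambda^{(1)}}\to0$, which does not follow from monotonicity alone but from the locality/$\sigma$-finiteness properties of the Bichteler--Jacod integral (it is exactly the kind of statement that \cite{BJ83} and \cite{chong17-JTP} supply); you should either cite that explicitly or note that on $\{T_n\ge t\}$ every competitor $S$ in the Daniell supremum vanishes, so the $L^0$-pseudonorm of the tail is controlled by $\bP(T_n<t)\to0$ after first establishing the locality of $I^{\Lambda}$ on $\cS$. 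Since that locality on $\cS$ is elementary, this is a presentational gap rather than a mathematical one, and your proof is acceptable.
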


\end{document}